\newcommand{\CSTAR}{C$^*$-}
\DeclareMathOperator{\Kconv}{\bbK_{conv}}
\DeclareMathOperator{\States}{States}
\DeclareMathOperator{\Pairing}{Pairing}
\newtheorem{thm}{Theorem}[section]
\newtheorem{conj}[thm]{Conjecture}
\newtheorem{claim}[theorem]{Claim}
\newtheorem{prop}[thm]{Proposition}
\newtheorem{remark}[theorem]{Remark}
\numberwithin{equation}{section}
\def\N{{\mathbb N}}
\def\Q{{\mathbb Q}}
\newcommand{\bbF}{{\mathbb F}}
\newcommand{\bbK}{{\mathbb K}}
\newcommand{\bbS}{{\mathbb S}}
\newcommand{\bbT}{{\mathbb T}}
\DeclareMathOperator{\Cu}{Cu}
\newcommand{\BCu}{\mathbf C}
\newcommand{\lCu}{\lesssim}
\newcommand{\sCu}{\sim}
\newcommand{\p}{\mathfrak p}
\newcommand{\cP}{{\mathcal P}}
\newcommand{\bfS}{{\mathbf S}}
\newcommand{\bfG}{{\mathbf G}}
\newcommand{\bfGord}{{\mathbf G_{\text{ord}}}}
\newcommand{\bfGordu}{{\mathbf G'_{\text{ord}}}}
\newcommand{\bbN}{{\mathbb N}}
\newcommand{\bbC}{\mathbb C}
\newcommand{\bbR}{\mathbb R}
\newcommand{\cX}{{\mathcal X}}
\newcommand{\cY}{{\mathcal Y}}
\newcommand{\cZ}{{\mathcal Z}}
\newcommand{\rs}{\restriction}
\DeclareMathOperator{\Proj}{Proj}
\DeclareMathOperator{\Sym}{Sym}
\DeclareMathOperator{\im}{im}
\DeclareMathOperator{\Tensor}{Tensor}
\DeclareMathOperator{\Kzero}{K_0}
\DeclareMathOperator{\Kzerou}{K_{0,u}}
\DeclareMathOperator{\Kone}{K_1}
\newcommand{\cC}{\mathcal C}
\newcommand{\cF}{\mathcal F}
\newcommand{\cB}{\mathcal B}
\newcommand{\cK}{\mathcal K}
\newcommand{\e}{\varepsilon}
\newcommand{\bfEll}{{\mathbf{Ell}}}
\newcommand{\Ell}{Ell}
\newcommand{\DeltaN}{\Delta^{\bbN}}
\DeclareMathOperator{\Au}{\mathfrak A_u}
\DeclareMathOperator{\Gammau}{\Gamma_u}
\DeclareMathOperator{\dimnuc}{\mathrm{dim}_{nuc}}
\DeclareMathOperator{\Th}{\mathrm{Th}}
\title{The descriptive set theory of \CSTAR algebra invariants}
\author{Ilijas Farah\affil{1}, Andrew Toms\affil{2} and Asger T\"ornquist\affil{3}\\ (Appendix with Caleb Eckhardt)}
\address{%
\affilnum{1} Department of Mathematics and Statistics, York University, 4700 Keele Street, North York, Ontario, Canada, M3J 1P3, and Matematicki Institut, Kneza Mihaila 34, Belgrade, Serbia (ifarah@mathstat.yorku.ca)
and
\affilnum{2} Department of Mathematics, Purdue University, 150 N. University St., West Lafayette, IN 47902, USA (atoms@purdue.edu)
and
\affilnum{3} Department of Mathematics, University of Copenhagen, Universitetsparken 5, 2100 Copenhagen, Denmark (asgert@math.ku.dk)}
\begin{document}

\begin{abstract}
We establish the Borel computability of various \CSTAR algebra invariants, including the Elliott invariant and the Cuntz semigroup.  As applications we deduce that AF algebras are classifiable by countable structures, and that a conjecture of Winter and the second author for nuclear separable simple \CSTAR algebras cannot be disproved by appealing to known standard Borel structures on these algebras.
\end{abstract}

\received{December 25, 2011}

\maketitle

\section{Introduction}\label{S.intro}

The classification theory of nuclear separable \CSTAR algebras via $\mathrm{K}$-theoretic and tracial invariants was initiated by G. A. Elliott c. 1990.  An ideal result in this theory is of the following type:

\vspace{2mm}
\begin{quote}
Let $\cC_1$ be a category of \CSTAR algebras, $\cC_2$ a category of invariants, and $\cF:\cC_1 \to \cC_2$ a functor.  We say that $(\cF, \cC_2)$ {\it classifies} $\cC_1$ if for any isomorphism $\phi:\cF(A) \to \cF(B)$ there is an isomorphism $\Phi:A \to B$ such that $\cF(\Phi) =\phi$, and if, moreover, the range of $\cF$ can be identified.
\end{quote}

\vspace{2mm}
\noindent
Given $A,B \in \cC_1$, one wants to decide whether $A$ and $B$ are isomorphic.  With a theorem as above in hand (and there are plenty such---see \cite{et} or \cite{Ror:Classification} for an overview), this reduces to deciding whether $\cF(A)$ and $\cF(B)$ are isomorphic;  in particular, one must compute $\cF(A)$ and $\cF(B)$.  What does it mean for an invariant to be computable?  The broadest definition is available when the objects of $\cC_1$ and $\cC_2$ admit natural parameterizations as standard Borel spaces, for the computability of $\cF(\bullet)$ then reduces to the question ``Is $\cF$ a Borel map?"  The aim of this paper is to prove that a variety of \CSTAR algebra invariants are indeed Borel computable, and to give some applications of these results.

Our main results are summarized informally below.

\begin{theorem}\label{invariants}
The following invariants of a separable \CSTAR algebra $A$ are Borel computable:
\begin{itemize}
\item[(i)] the (unital) Elliott invariant $\mathrm{Ell}(A)$ consisting of pre-ordered $\mathrm{K}$-theory, tracial functionals, and the pairing between them;
\item[(ii)] the Cuntz semigroup $\mathsf{Cu}(A)$;
\item[(iii)] the radius of comparison of $A$;
\item[(iv)] the real and stable rank of $A$;
\item[(v)] the nuclear dimension of $A$;
\item[(vi)] the presence of $\mathcal{Z}$-stability for $A$;
\item[(vii)] the theory $\mathrm{Th}(A)$ of $A$.
\end{itemize}
\end{theorem}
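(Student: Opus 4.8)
The plan is to fix once and for all a standard Borel parameterization of separable \CSTAR algebras and then to treat each invariant as a map into an appropriate standard Borel space of ``invariant data,'' verifying Borelness one invariant at a time. For the parameterization I would represent $A$ by a sequence $(a_n)_{n\in\bbN}$ enumerating a norm-dense countable $\bbQ[i]$-$*$-subalgebra, realized inside $\bbB(H)$ for a fixed separable infinite-dimensional $H$; the set of such codes forms a standard Borel space on which the algebraic operations and the norm act Borel-measurably. From this one extracts a battery of Borel \emph{selection lemmas}: Borel maps producing, from a code for $A$, dense sequences of self-adjoint and positive elements in each matrix amplification $M_k(A)$, the selected elements that are (or approximate) projections and unitaries, and a Borel continuous functional calculus. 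These selection lemmas are the workhorse, since every subsequent argument reduces the computation of an invariant to countably many norm inequalities among selected elements.

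I would then dispose of the order-theoretic invariants (i)--(iv) as a block. For the Cuntz semigroup (ii) I would code $\Cu(A)$ as a countable ordered semigroup whose elements are Cuntz classes of selected positive elements of $M_\infty(A)$; the decisive point is that Cuntz subequivalence admits the finitary description that $a \precsim b$ iff for every $\e>0$ there is an $r$, which may be drawn from the selected data, with $\|a - rbr^*\|<\e$, so that $\precsim$ and hence the order and addition are Borel on representatives and descend to a Borel code for $\Cu(A)$. For the Elliott invariant (i) I would code $\Kzero$ and $\Kone$ as countable groups generated by classes of selected projections and unitaries in matrix amplifications of the unitization, using that Murray--von Neumann equivalence, stable equivalence, and unitary homotopy can each be witnessed to within arbitrary precision by selected elements (approximate intertwiners and approximate unitary paths), so that the group operations and the order on $\Kzero$ become Borel; the trace space is coded as a compact convex set of selected tracial states, tracial positivity being a closed condition, and the pairing $(\tau,[p])\mapsto\tau(p)$ is continuous in the representatives. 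Items (iii) and (iv) then fall out, the radius of comparison being an explicit $\inf/\sup$ over the now-Borel Cuntz semigroup and trace pairing, and real and stable rank being determined by the density among the selected tuples of those meeting an explicit invertibility condition.

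The substance lies in (v)--(vii). For the theory (vii) I expect the cleanest argument: every sentence of the continuous logic for \CSTAR algebras evaluates to a $\sup/\inf$ of continuous combinations of norms of $*$-polynomials in selected elements, hence to a Borel function of the code, so $A\mapsto\Th(A)$ is Borel into the Polish space of complete theories. For the nuclear dimension (v), ``$\dimnuc A\le n$'' is a universal-over-$\e$, existential-over-approximations condition, and the obstacle is to replace the quantifier over the analytic space of completely positive maps by a countable existential over finitely coded approximate maps tested on a finite selected set; since the finite-dimensional targets and the maps are pinned down by finite rational data up to $\e$, this can be arranged so as to keep the condition Borel. \emph{The single hardest step is $\mathcal Z$-stability} (vi): the naive assertion ``$A\cong A\otimes\mathcal Z$'' is an isomorphism statement and so a priori only $\bSigma^1_1$, so to obtain Borelness I must substitute a local, first-order-flavoured characterization---for instance the existence, verifiable to within $\e$ on the selected dense set, of approximately central order-zero maps out of matrix algebras realizing the defining relations of $\mathcal Z$---and then prove both that this reformulation is equivalent to $\mathcal Z$-stability and that it is Borel. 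Controlling the interchange of the uncountably many quantifiers with the countable selected data, uniformly in the code, is where I expect the real difficulty to lie.
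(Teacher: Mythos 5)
Your overall strategy --- fix the parameterization $\Gamma(H)=\cB(H)^{\bbN}$, extract Borel selection maps producing dense sequences of positive elements, projections, and so on, and reduce each invariant to countably many norm conditions over the selected data --- is exactly the paper's strategy, and your treatments of (i) and (iii)--(vii) track the paper's closely. (Two local differences: the paper computes $K_1$ by Bott periodicity from $K_0$ of the suspension rather than via unitary homotopy, and $\mathcal Z$-stability, which you flag as the single hardest step, is handled exactly as you predict --- approximately central $*$-homomorphisms from the dimension drop algebras $Z_{n,n+1}$ via their weakly stable relations --- but occupies only a short appendix.)

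The genuine gap is in (ii), which is in fact the most involved part of the paper. A countable set of Cuntz classes of selected positive elements of $A\otimes\cK$, equipped with Borel relations $\precsim$ and $+$, is not yet a code for $\Cu(A)$: the semigroup $\Cu(A)$ is uncountable and lives in the category $\mathbf{Cu}$, where suprema of increasing sequences and the compact-containment relation $\ll$ are part of the structure and must be preserved by isomorphisms. To make ``Borel computable'' meaningful you must (a) enlarge the code to record $\ll$ on the countable dense set --- the paper shows this is Borel via $[d_m]\ll[d_n]\iff(\exists j)\ d_m\precsim (d_n-1/j)_+$, which requires choosing the dense sequence closed under the cutdowns $a\mapsto(a-1/j)_+$; (b) prove that the completion $W(D)$ of the coded structure $(D,+,\leq,\ll)$ by $\ll$-increasing sequences recovers $\Cu(A)$ in $\mathbf{Cu}$, which leans on the completion theory of \cite{abp:comp}; and (c) prove that the natural analytic equivalence relation on codes (existence of a compatible pair of maps between the dense sets) coincides with isomorphism of the completions in $\mathbf{Cu}$ --- this is the content of Lemmas \ref{wayless}--\ref{cuntzcode} and Proposition \ref{P.Cu.1}. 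Your phrase ``descend to a Borel code for $\Cu(A)$'' conceals exactly this work. A smaller instance of the same omission occurs in (iii): the radius of comparison quantifies over all of $\Cu(A)$, and one must prove (Proposition \ref{rccountable}) that the infimum computed over the countable sup-dense subsemigroup agrees with the one over $\Cu(A)$, using compactness of $[1_A]$ and the compatibility of addition with $\ll$ and suprema.
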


\noindent
Proving that the Elliott invariant and the Cuntz semigroup are computable turn out to be the most involved tasks.  (The latter in particular.)  

From a descriptive set theoretic point of view, a classification problem is a pair $(X,E)$ consisting of a standard Borel space $X$, the (parameters for) objects to be classified, and an equivalence relation $E$, the relation of isomorphism among the objects in $X$. In most interesting cases, the equivalence relation $E$ is easily definable from the elements of $X$ and is seen to be Borel or, at worst, analytic; that is certainly the case here.  To compare the relative difficulty of classification problems $(X,E)$ and $(Y,F)$, we employ the notion of Borel reducibility: One says that $E$ is Borel reducible to $F$ if there is a Borel map $\Theta:X \to Y$ with the property that
\[
xEy \iff \Theta(x)F\Theta(y).
\]
The relation $F$ is viewed as being at least as complicated as $E$.  The relation $E$ is viewed as being particularly nice when $F$-classes are ``classifiable by countable structures".  Equivalently (\cite{frst89}), the relation $E$ is no more complicated than isomorphism for countable graphs.  Theorem \ref{invariants} (i) entails the computability of the pointed (pre-)ordered $\mathrm{K}_0$-group of a unital separable \CSTAR algebra.  As isomorphism of such groups is Borel-reducible to isomorphism of countable graphs, we have the following result.  

\begin{theorem}\label{AFccs}
AF algebras are classifiable by countable structures.
\end{theorem}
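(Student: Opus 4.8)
The plan is to realize isomorphism of AF algebras as a Borel reduct of isomorphism of countable graphs, by routing through the ordered $\mathrm{K}_0$-group and invoking Elliott's classification. Throughout I work with the standard Borel parameterization of separable \CSTAR algebras underlying Theorem~\ref{invariants}, and I first record that the AF algebras form a Borel subset of this space: approximate finite-dimensionality asserts that every finite subset of a fixed dense sequence is, up to arbitrary tolerance, approximately contained in a finite-dimensional \CSTAR subalgebra, and this is a countable conjunction of conditions each of which is Borel in the parameters. Restricting to this Borel set, the relation to be classified is the restriction of the (analytic) isomorphism relation on separable \CSTAR algebras.

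The mathematical engine is Elliott's theorem: two AF algebras $A$ and $B$ are isomorphic if and only if their ordered $\mathrm{K}_0$-groups, together with their scale (and distinguished order unit $[1_A]$ in the unital case), are isomorphic as ordered abelian groups. For AF algebras the preorder computed by Theorem~\ref{invariants}(i) is genuinely a partial order, so these are dimension groups, and they are countable structures in a fixed countable first-order language: a group operation, a unary predicate for the positive cone, and a constant (or unary predicate) recording the order unit/scale. Thus Elliott's theorem already exhibits $A \mapsto (\mathrm{K}_0(A), \mathrm{K}_0(A)^+, [1_A])$ as a \emph{faithful} invariant; what must be upgraded is that the assignment is \emph{Borel}, and this is exactly the content of Theorem~\ref{invariants}(i), which I would apply after fixing a Borel coding of countable ordered groups as elements of a standard Borel space of structures on $\bbN$.

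It then remains to compare isomorphism of dimension groups with isomorphism of countable graphs. Since dimension groups are countable structures in a countable language, the Friedman--Stanley theorem \cite{frst89} supplies a Borel reduction of their isomorphism relation to graph isomorphism. Composing the Borel map of Theorem~\ref{invariants}(i) with this reduction produces a single Borel map $\Theta$ from the space of (parameters for) AF algebras to the space of countable graphs satisfying $A \cong B \iff \Theta(A) \cong \Theta(B)$, which is precisely the assertion that AF algebras are classifiable by countable structures.

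The genuinely hard analytic work---extracting the ordered $\mathrm{K}_0$-group by a Borel map---is discharged in advance by Theorem~\ref{invariants}(i), so the main remaining obstacle is bookkeeping rather than new mathematics. One must verify that the positive cone and order unit (or scale) are recorded in a Borel fashion, so that the target equivalence relation is order-unit-preserving isomorphism, matching Elliott's invariant exactly; and one must check that the passage from AF algebras qua \CSTAR-algebra parameters, to dimension groups qua structures on $\bbN$, to countable graphs, remains within standard Borel spaces at every stage. Given the faithfulness supplied by Elliott's theorem, no further analytic input beyond Theorem~\ref{invariants}(i) is needed.
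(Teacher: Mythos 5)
Your argument is essentially the paper's own: the authors deduce Theorem~\ref{AFccs} from the Borel computability of the pointed ordered $\mathrm{K}_0$-group (Theorem~\ref{invariants}(i), via Proposition~\ref{P.K-0}), combined with Elliott's classification of AF algebras and the Borel reducibility of isomorphism of countable structures to graph isomorphism from \cite{frst89}. Your additional observation that the AF algebras form a Borel subset of the parameter space is a correct and worthwhile piece of bookkeeping that the paper leaves implicit.
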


In order to classify nuclear separable \CSTAR algebras using only $\mathrm{K}$-theoretic and tracial invariants, it is necessary to assume that the algebras satisfy some sort of regularity property, be it 
topological,  homological or  \CSTAR algebraic  (see \cite{et} for a survey).  This idea is summarized in the following conjecture of Winter and the second author.

\begin{conj}\label{toms-winter}
Let $A$ be a simple unital separable nuclear and infinite-dimensional \CSTAR algebra.  The following are equivalent:
\begin{enumerate}
\item[(i)] $A$ has finite nuclear dimension;
\item[(ii)] $A$ is $\mathcal{Z}$-stable;
\item[(iii)] $A$ has strict comparison of positive elements.
\end{enumerate}
\end{conj}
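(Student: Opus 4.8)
The statement is a cycle of three equivalences, so the natural plan is to prove it as a loop of implications, say $(i)\Rightarrow(ii)\Rightarrow(iii)\Rightarrow(i)$, treating each arrow with tools appropriate to the gap it bridges. I should stress at the outset that this is an open conjecture; what follows is a strategy, and only two of the three arrows seem within reach of established methods, with the third being the genuine obstacle.

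First I would dispatch $(i)\Rightarrow(ii)$, that finite nuclear dimension implies $\mathcal{Z}$-stability. The idea is that a finite upper bound on nuclear dimension forces strong local approximation by finite-dimensional pieces with a bounded number of ``colors,'' and this combinatorial control is exactly what is needed to manufacture, inside the central sequence algebra $A_\omega \cap A'$, the approximately central completely positive order-zero maps out of matrix algebras $M_k$ whose existence characterizes $\mathcal{Z}$-absorption, i.e.\ $A \cong A \otimes \mathcal{Z}$. One builds such order-zero maps and glues them across the finitely many colors; simplicity and infinite-dimensionality guarantee enough room to run the construction. This is the arrow where the hypotheses interact most smoothly.

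Next, $(ii)\Rightarrow(iii)$, that $\mathcal{Z}$-stability implies strict comparison, I would argue entirely at the level of the Cuntz semigroup. The Jiang--Su algebra has an almost unperforated Cuntz semigroup, and tensoring with $\mathcal{Z}$ transfers almost unperforation to $\mathsf{Cu}(A)$; almost unperforation is precisely the order-theoretic statement that comparison of positive elements is governed by the normalized traces, which is strict comparison. The work sits in the semigroup, using that $\mathsf{Cu}(A \otimes \mathcal{Z}) \cong \mathsf{Cu}(A)$ for the relevant class and that $\mathcal{Z}$ supplies the missing divisibility.

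The main obstacle, and the reason this remains a conjecture, is the closing arrow $(iii)\Rightarrow(i)$ (equivalently $(iii)\Rightarrow(ii)$), which must recover a \emph{topological} regularity, finite nuclear dimension or the tensorial regularity of $\mathcal{Z}$-stability, from a purely \emph{Cuntz-theoretic} one. The difficulty is that strict comparison only controls the order of positive elements through traces and gives no a priori bound on the number of colors needed to approximate the algebra. The plausible route runs through the central sequence algebra and a comparison property for ``small'' versus ``large'' positive elements there, using strict comparison to verify the needed inequalities and then leveraging a fine analysis of the trace simplex $T(A)$, in particular the passage from tracial comparison to comparison in the uniform tracial ultrapower, to produce the unital order-zero maps required for $\mathcal{Z}$-stability. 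Controlling this step uniformly over a possibly large trace space, and bridging the one-parameter information supplied by traces with the multi-color combinatorics demanded by nuclear dimension, is exactly where a complete proof is not yet available.
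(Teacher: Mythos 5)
The statement you are asked about is Conjecture~\ref{toms-winter}, and the paper does not prove it --- it remains open, and the paper's contribution is orthogonal: it shows the three classes (i)--(iii) are Borel subsets of the standard Borel space of \CSTAR algebras, so that the conjecture cannot be refuted on descriptive-set-theoretic grounds. There is therefore no proof in the paper to compare yours against, only the surrounding discussion, which credits (i) $\Rightarrow$ (ii) to Winter combined with Robert and (ii) $\Rightarrow$ (iii) to R\o rdam, and notes that only partial converses are known. Your account is consistent with this: your sketch of (i) $\Rightarrow$ (ii) via approximately central order-zero maps from matrix algebras into the central sequence algebra, glued over the finitely many colors, is essentially Winter's argument, and your sketch of (ii) $\Rightarrow$ (iii) via almost unperforation of the Cuntz semigroup transferred through tensoring with $\mathcal Z$ is R\o rdam's. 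You are also right, and commendably explicit, that (iii) $\Rightarrow$ (ii) (equivalently (iii) $\Rightarrow$ (i)) is the genuinely open arrow, and your description of where the difficulty lies --- passing from tracial comparison data to the uniform structure needed in the central sequence algebra, uniformly over a possibly large trace simplex --- is an accurate description of the obstruction. (Subsequent work of Matui--Sato and others resolved this under additional hypotheses on $T(A)$, by exactly the route you indicate, but that is beyond both your proposal and this paper.)

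So there is a gap, but it is the gap that makes this a conjecture rather than a theorem, and you have identified it correctly rather than papered over it. The one caution I would offer is presentational: a proof ``proposal'' for a statement labelled as a conjecture should not be structured as if the cycle of implications could be closed; as written, your first two paragraphs read as proofs of known theorems (which should be cited, not reproved) and your third is a research program. If the goal were to contribute to this paper, the relevant task is not to prove the conjecture but to establish, as the paper does in Section~\ref{S.Cuntz}, Section~\ref{S.other} and the Appendix, that each of the three conditions defines a Borel set, so that the three classes have the same descriptive complexity whether or not the conjecture holds.
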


\noindent
Combining the main result of \cite{W:nucz} with that of \cite{Rob:ncomp} yields (i) $\Rightarrow$ (ii), while R\o rdam proves (ii) $\Rightarrow$ (iii) in \cite{Ro:zsr}.  Partial converses to these results follow from the successes of Elliott's classification program.  Here we prove the following result.

\begin{theorem}
The classes (i), (ii), and (iii) of Conjecture \ref{toms-winter} form Borel sets.
\end{theorem}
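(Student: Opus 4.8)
The plan is to obtain all three assertions as immediate consequences of Theorem~\ref{invariants}, treating the present statement as a corollary of the Borel computability results proved there. In every case the mechanism is the same: the class in question is exhibited as the preimage of a manifestly Borel target set under one of the Borel maps supplied by Theorem~\ref{invariants}, and the preimage of a Borel set under a Borel map is Borel.

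For class~(i) I would invoke Theorem~\ref{invariants}(v). Borel computability of the nuclear dimension means that the assignment $A \mapsto \dimnuc(A) \in \bbN \cup \{\infty\}$ is Borel; in particular $\{A : \dimnuc(A) \le n\}$ is Borel for each $n \in \bbN$. The class of algebras of finite nuclear dimension is then
\[
\bigcup_{n \in \bbN} \{A : \dimnuc(A) \le n\},
\]
a countable union of Borel sets and hence Borel. For class~(ii) there is nothing to add beyond quoting Theorem~\ref{invariants}(vi), which asserts precisely that the collection of $\mathcal{Z}$-stable separable \CSTAR algebras is a Borel subset of the parameter space.

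For class~(iii) I would first pass from strict comparison to the radius of comparison: a \CSTAR algebra has strict comparison of positive elements exactly when its radius of comparison $\mathrm{rc}(A)$ is $0$. Granting this, Theorem~\ref{invariants}(iii) provides a Borel map $A \mapsto \mathrm{rc}(A) \in [0,\infty]$, and the class of algebras with strict comparison is the preimage $\{A : \mathrm{rc}(A) = 0\}$ of the closed singleton $\{0\}$, which is therefore Borel.

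The substantive work does not lie in this deduction but in Theorem~\ref{invariants}, where the descriptive set theory is carried out. The only point in the present argument that demands attention is the translation used for class~(iii): one must verify that strict comparison coincides exactly with the vanishing of the radius of comparison, and that the Borel computation of $\mathrm{rc}$ records the correct values in the boundary cases $\mathrm{rc}(A) = 0$ and $\mathrm{rc}(A) = \infty$, so that the singleton $\{0\}$ really does single out precisely the algebras enjoying strict comparison.
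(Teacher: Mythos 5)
Your overall strategy is the one the paper intends: this theorem is never given a standalone proof in the text and is meant to be read off from the Borel computability results, exactly as you do. For class (i) your decomposition $\bigcup_n\{\gamma : \dimnuc(C^*(\gamma))\le n\}$ is precisely what the appendix proof of the nuclear dimension theorem delivers, and for class (ii) Theorem~\ref{zstable} is the whole story. Those two parts are correct and match the paper.

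The one place where your argument is not closed is the step you yourself flag for class (iii): the identification of strict comparison with $\mathrm{rc}(A)=0$. This equivalence is not proved anywhere in the paper, and it is not automatic from the definition used there, since $r(S,e)$ is defined as an \emph{infimum} of ratios $m/n$ for which the implication $(n+1)x+me\le ny \Rightarrow x\le y$ holds, and an infimum equal to $0$ does not by itself yield the implication at $m=0$ (the infimum need not be attained). So the preimage $\{\gamma : \mathrm{rc}(\gamma)=0\}$ is certainly Borel, but you have not shown it equals the class of algebras with strict comparison, and that is the only substantive content of part (iii). The repair is available from the paper's own machinery without going through $\mathrm{rc}$ at all: for the simple, unital, exact algebras in Conjecture~\ref{toms-winter}, strict comparison of positive elements is equivalent (by R\o rdam's theorem) to almost unperforation of $\Cu(C^*(\gamma))$, i.e.\ to the statement that $(n+1)x\le ny$ for some $n$ implies $x\le y$. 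Using the Borel map $\Psi$ of Proposition~\ref{P.Cu.2} together with Lemmas~\ref{less} and~\ref{lessembed} (and an approximation argument in the style of Proposition~\ref{rccountable} to pass between the sup-dense subsemigroup $D$ and its completion $W(D)$), almost unperforation becomes a countable conjunction of Borel conditions on $\Psi(\gamma)\in\BCu$, which gives the Borel set directly. Either supply that argument or prove the equivalence of strict comparison with $\mathrm{rc}=0$ in the precise form the paper's definition requires; as written, part (iii) rests on an unverified identification.
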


\noindent
Therefore the classes of \CSTAR algebras appearing in the conjecture have the same 
 descriptive set theoretic complexity.  

The article is organized as follows:  in Section \ref{S.preliminaries} we recall two parameterizations of separable \CSTAR algebras as standard Borel spaces;  Section \ref{S.Elliott} establishes the computability of the Elliott invariant;  
Section \ref{S.Cuntz} considers the computability of the Cuntz semigroup and the radius of comparison;  Section \ref{S.other} and the Appendix deal with $\mathcal{Z}$-stability, nuclear dimension, the first-order theory of a \CSTAR algebra in the logic of metric structures, and the real and stable rank.

\section{Preliminaries}\label{S.preliminaries}
 In \cite{FaToTo2} we introduced four parameterizations of separable \CSTAR algebras by standard Borel spaces and proved that they were equivalent.  Here we'll need only two, which we recall for the reader's convenience.  
 

\subsection{The space $\Gamma(H)$.} \label{ss.gamma} 
Let $H$ be a separable infinite dimensional Hilbert space and let as usual $\cB(H)$ denote the space of bounded operators on $H$. The space $\cB(H)$ becomes a standard Borel space when equipped with the Borel structure generated by the weakly open subsets. Following \cite{Kec:C*} we let
$$
\Gamma(H)=\cB(H)^{\bbN},
$$
and equip this with the product Borel structure. For each $\gamma\in\Gamma(H)$ we let $C^*(\gamma)$ be the \CSTAR algebra generated by the sequence $\gamma$. If we identify each $\gamma\in\Gamma(H)$ with $C^*(\gamma)$, then naturally $\Gamma(H)$ parameterizes all separable \CSTAR algebras acting on $H$. Since every separable \CSTAR algebra is isomorphic to a \CSTAR subalgebra of $\cB(H)$ this gives us a standard Borel parameterization of the category of all separable \CSTAR algebras. If the Hilbert space $H$ is clear from the context we will write $\Gamma$ instead of $\Gamma(H)$. We define 
$$
\gamma\simeq^{\Gamma}\gamma'\iff C^*(\gamma)\text{ is isomorphic to } C^*(\gamma').
$$


\subsection{The space $\hat\Gamma(H)$.} \label{ss.hatgamma}
Let $\Q(i)=\Q+i\Q$ denote the complex rationals. Following \cite{Kec:C*}, let $(\p_j: j\in \N)$ enumerate the non-commutative $*$-polynomials without constant term in the formal variables $X_k$, $k\in\N$, with coefficients in $\Q(i)$, and for $\gamma\in \Gamma$ write $\p_j(\gamma)$ for the evaluation of $\p_j$ with $X_k=\gamma(k)$. Then $C^*(\gamma)$ is the norm-closure of $\{\p_j(\gamma): j\in \bbN\}$. The map $\Gamma\to\Gamma:\gamma\mapsto\hat\gamma$ where $\hat\gamma(j)=\p_j(\gamma)$ is clearly a Borel map from $\Gamma$ to $\Gamma$. If we let
$$
\hat\Gamma(H)=\{\hat\gamma:\gamma\in\Gamma(H)\},
$$
then $\hat\Gamma(H)$ is a standard Borel space and provides another parameterization of the \CSTAR algebras acting on $H$; we suppress $H$ and write $\hat\Gamma$ whenever possible. For $\gamma\in\hat\Gamma$, let $\check\gamma\in\Gamma$ be defined by 
$$
\check\gamma(n)=\gamma(i)\iff \p_i=X_n,
$$
and note that $\hat\Gamma\to\Gamma:\gamma\mapsto\check\gamma$ is the inverse of $\Gamma\to\hat\Gamma:\gamma\mapsto\hat\gamma$. We let $\simeq^{\hat\Gamma}$ be
$$
\gamma\simeq^{\Gamma}\gamma'\iff C^*(\gamma)\text{ is isomorphic to } C^*(\gamma').
$$
It is clear from the above that $\Gamma$ and $\hat\Gamma$ are equivalent parameterizations.

An alternative picture of $\hat\Gamma(H)$ is obtained by considering the free (i.e., surjectively universal) countable unnormed $\Q(i)$-$*$-algebra $\mathfrak A$. We can identify $\mathfrak A$ with the set $\{\p_n:n\in\N\}$. Then
$$
\hat \Gamma_{\mathfrak A}(H)=\{f:\mathfrak A\to\mathcal B(H):f\text{ is a $*$-homomorphism}\}
$$
is easily seen to be a Borel subset of $\cB(H)^{\mathfrak A}$. For $f\in\hat\Gamma_{\mathfrak A}$ let $C^*(f)$ be the norm closure of $\im(f)$, and define
$$
f\simeq^{\hat\Gamma_{\mathfrak A}} f'\iff   C^*(f)\text{ is isomorphic to } C^*(f').
$$
Clearly the map $\hat\Gamma\to\hat\Gamma_{\mathfrak A}:\gamma\mapsto f_\gamma$ defined by $f_\gamma(\p_j)=\gamma(j)$ provides a Borel bijection witnessing that $\hat\Gamma$ and $\hat\Gamma_{\mathfrak A}$ are equivalent (and therefore they are also equivalent to $\Gamma$.)

We note for future reference that if we instead consider the free countable \emph{unital} unnormed $\Q(i)$-$*$-algebra $\Au$ and let
$$
\hat \Gamma_{\Au}(H)=\{f:\Au\to\mathcal B(H):f\text{ is a \emph{unital} $*$-homomorphism}\},
$$
then this gives a parameterization of all unital separable \CSTAR subalgebras of $\mathcal B(H)$. Note that $\Au$ may be identified with the set of all formal $*$-polynomials in the variables $X_k$ with coefficients in $\Q(i)$ (allowing a constant term.)

\section{The Elliott invariant}\label{S.Elliott}

In this section we introduce a standard Borel space of 
Elliott invariants. We prove that the computation of the Elliott invariant of
$C^*(\gamma)$ is given by a Borel-measurable function (Theorem~\ref{T.Ell})).
The Elliott invariant of a unital \CSTAR algebra  $A$ is the sextuple
(see \cite{Ror:Classification}, \cite{RoLaLa:Introduction})
\[
(K_0(A), K_0(A)^+,[1_A]_0,K_1(A), T(A), r_A\colon T(A)\to S(K_0(A)).
\]
Here $(K_0(A),K_0(A)^+, [1_A]_0)$ is the ordered $K_0$-group with the canonical order unit, 
$K_1(A)$ is the $K_1$-group of $A$, and  $T(A)$ is the Choquet simplex of all tracial
states of $A$. Recall that a state $\phi$ on a unital \CSTAR algebra  $A$ is \emph{tracial} 
if $\phi(ab)=\phi(ba)$ for all $a$ and $b$ in $A$. Finally, 
 $r_A\colon T(A)\to S(K_0(A))$ is the coupling map that associates
a state on $K_0(A)$ to every trace on $A$. Recall that a \emph{state} on an ordered Abelian 
group is a positive homomorphism  $f\colon G\to (\bbR,+)$ and that the Murray-von Neumann
equivalence of projections $p$ and $q$ in $A$ implies $\phi(p)=\phi(q)$ for every 
trace $\phi$ on $A$ (see \S\ref{S.K} below).

\subsection{Spaces of  countable groups} 
A reader familiar with the logic actions may want to skip the following few paragraphs.   
As usual, identify $n\in\N$ with the set $\{0,1,\ldots, n-1\}$. For $n\in\N\cup\{\N\}$, let
$$
\bfS(n)=\{f:n^2\to n:(\forall i,j,k\in n) f(i,f(j,k))=f(f(i,j),k)\}.
$$
Note that $\bfS(\N)$ is closed when $\N^{\N^2}$ is given the product topology, and that if for $f\in\bfS(n)$  and $i,j\in n$ we define $i\cdot_f j$ as $f(i,j)$, then $\cdot_f$ gives $n$ a semigroup structure. The space $\bfS(n)$ may therefore be thought of as a Polish space parameterizing all countable semigroups with underlying set $n\in\N\cup\{\N\}$. We let $\bfS'(n)=\bfS(n)\times n$, and think of elements $(f,i)\in\bfS'(n)$ as the space of semigroups with a distinguished element $i$.

The subsets of $\bfS(n)$ (respectively $\bfS'(n)$) consisting of Abelian semigroups, groups and Abelian groups form closed subspaces that we denote by $\bfS_a(n)$, $\bfG(n)$ and $\bfG_a(n)$ (respectively $\bfS'_a(n)$, $\bfG'(n)$ and $\bfG_a'(n)$). The isomorphism relation in $\bfS(n)$, $\bfS_a(n)$, $\bfG(n)$ and $\bfG_a(n)$, as well as the corresponding ``primed'' classes, are induced by the natural action of the symmetric group $\Sym(n)$. These are very special cases of the logic actions, see \cite[2.5]{BeKe:Polish}.

We also define the spaces $\bfGord(n)$ and $\bfGordu(n)$ of \emph{ordered} Abelian groups and ordered Abelian groups with a distinguished \emph{order unit}, in the sense of \cite[Definition~1.1.8]{Ror:Classification}. The space $\bfGord(n)$ consists of pairs $(f,X)\in\bfG_a(n)\times \mathcal P(n)$ such that if we define for $x,y\in n$ the operation $x+_f y=f(x,y)$ and $x\leq_X y\iff y+_f(-x)\in X$, then we have $X+_fX\subseteq X$, $-X\cap X=\{0\}$ and $X-X=n$. The space $\bfGordu(n)$ consists of pairs $((f,X),u)\in \bfGord(n)\times n$ satisfying additionally the conditions
\begin{enumerate}
\item $u\in X$;
\item for all $x\in n$ there is $k\in\N$ such that $-ku\leq_X x\leq_X ku$.
\end{enumerate}
From their definition it is easy to verify that $\bfGord(\N)$ and $\bfGordu(\N)$ form $G_\delta$ subsets of $\bfG_a(\N)\times\mathcal P(\N)$ and $\bfGord(\N)\times\N$, and so are Polish spaces.

Define $\bfGord$ and $\bfGordu$ to be the disjoint unions
$$
\bfGord=\bigsqcup_{n\in\N\cup\{\N\}} \bfGord(n)\text{ and } \bfGordu=\bigsqcup_{n\in\N\cup\{\N\}}\bfGordu(n)
$$
and give these spaces the natural standard Borel structure. Similarly, define the standard Borel spaces $\bfS$, $\bfS_a$, $\bfG$ and $\bfG_a$ and their primed counterparts to be the disjoint union of their respective constituents.

\subsection{Traces and states} Recall that a compact convex set $K$ is a \emph{Choquet simplex} 
if for every point $x$ in $K$ there exists a unique probability measure $\mu$ supported by  the extreme boundary of $K$ that has $x$ as its barycentre   (see \cite{alfsen71}). Every metrizable Choquet simplex is affinely homeomorphic to a subset of $\Delta^{\bbN}$, 
with  $\Delta=[0,1]$. 

For every \CSTAR  algebra $A$ the space $T(A)$ of its traces 
is a Choquet simplex. In case when $A$ is separable 
  it can be identified with a compact convex subset of the Hilbert cube~$\Delta^{\bbN}$. 
In \cite[Lemma 4.7]{FaToTo2}
it was shown that  all Choquet simplexes form a Borel subset  of 
the $F(\Delta^{\bbN})$. 


A \emph{state} on an ordered Abelian group with unit $(G,G^+,1)$ is a homomorphism 
$\phi\colon G\to \bbR$ such that $\phi[G^+]\subseteq\bbR^+$ and $\phi(1)=1$. For every 
$n\in \bbN\cup \{\bbN\}$ the set $\cZ_0$ 
of all $((f,X,u),\phi) \in \bfGordu(n)\times \bbR^n$ such that 
$\phi[X]\subseteq \bbR^+$, $\phi(u)=1$ and $\phi(f(i,j))=\phi(i)+\phi(j)$ for all $i,j$ 
is clearly closed.   By \cite[Lemma 3.16]{FaToTo2}, 
  the map 
  \[
  \States\colon \bfGordu(n)\to \bbR^n
  \]
    such that $\States(f,X,u)$ is the 
   section of $\cZ_0$ at $(f,X,u)$ is Borel. 

Recall that $\Kconv$ denotes the compact metric space of compact convex subsets
of $\Delta^{\bbN}$. 

\begin{lemma} \label{L.dual.1.5} 
There is a continuous map  $\Psi\colon \Kconv\to C(\Delta^{\bbN}, \Delta^{\bbN})$ such that 
$\Psi(K)$ is a retraction from~$\Delta^{\bbN}$ onto $K$ for  
 all $K\in\Kconv$.
\end{lemma}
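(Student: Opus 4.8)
The plan is to realize the Hilbert cube as a compact convex subset of a genuine Hilbert space and to take $\Psi(K)$ to be the nearest-point projection onto $K$, exploiting the fact that in a Hilbert space the metric projection onto a closed convex set is single-valued, non-expansive, and fixes the set pointwise. Concretely, I would equip $\bbR^{\bbN}$ with the weighted inner product $\langle x,y\rangle=\sum_{n}2^{-n}x_ny_n$ and let $H$ be the associated real Hilbert space. Then $\Delta^{\bbN}=[0,1]^{\bbN}$ is a bounded convex subset of $H$, and the norm topology it inherits from $H$ coincides with the product topology (both being the unique compact metrizable topology on $\Delta^{\bbN}$); write $D=\diam(\Delta^{\bbN})<\infty$ for its diameter in this norm.

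For each $K\in\Kconv$ the set $K$ is a nonempty closed convex subset of $H$, so the metric projection $P_K\colon H\to K$ is defined and is characterized by $P_K(x)\in K$ together with $\langle x-P_K(x),\,y-P_K(x)\rangle\le 0$ for all $y\in K$. I would then set $\Psi(K)=P_K\restriction\Delta^{\bbN}$. Since $P_K(x)\in K\subseteq\Delta^{\bbN}$ and $P_K$ restricts to the identity on $K$, the map $\Psi(K)$ is a retraction of $\Delta^{\bbN}$ onto $K$; and since $P_K$ is non-expansive for the Hilbert norm it is continuous for the product topology, so $\Psi(K)\in C(\Delta^{\bbN},\Delta^{\bbN})$. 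As the target is compact, the sup-metric topology on $C(\Delta^{\bbN},\Delta^{\bbN})$ does not depend on the choice of compatible metric, so I may measure distances in the target using the Hilbert norm.

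The one substantive point, and the main obstacle, is the continuity of $K\mapsto P_K$ from the Hausdorff metric on $\Kconv$ to the uniform metric; for this I would use the variational inequality above. Fix $K,L\in\Kconv$ at Hausdorff distance $\delta$, fix $x\in\Delta^{\bbN}$, and put $p=P_K(x)$, $q=P_L(x)$. Choose $q'\in K$ and $p'\in L$ with $\|p-p'\|\le\delta$ and $\|q-q'\|\le\delta$. Testing the characterization of $p$ against $q'\in K$ and of $q$ against $p'\in L$, and expanding $\|p-q\|^2=\langle x-q,\,p-q\rangle-\langle x-p,\,p-q\rangle$, one obtains $\|p-q\|^2\le(\|x-p\|+\|x-q\|)\,\delta\le 2D\delta$. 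Hence $\sup_{x}\|\Psi(K)(x)-\Psi(L)(x)\|\le\sqrt{2D\,\delta}$, which tends to $0$ as $\delta\to 0$. This proves that $\Psi$ is (uniformly) continuous and completes the argument; note that the whole construction is canonical, so no separate measurable-selection step is needed.
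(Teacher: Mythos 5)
Your proof is correct and takes essentially the same route as the paper: both realize $\Delta^{\bbN}$ as a compact convex subset of a (weighted) $\ell_2$ space and define $\Psi(K)$ as the unique nearest-point projection onto the closed convex set $K$, which is automatically a retraction. The only difference is the continuity in $K$: the paper argues softly via closedness of the graph and compactness, whereas you extract the explicit modulus $\sup_x\|P_K(x)-P_L(x)\|\le\sqrt{2D\,\delta}$ from the variational inequality — a correct and slightly more informative version of the same step.
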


\begin{proof} Identify $\DeltaN$ with $\prod_n [-1/n,1/n]$  and consider the compatible 
$\ell_2$ metric $d_2$ on $\DeltaN$. 
Consider the set
\[
\cZ=\{(K,x,y): K\in \Kconv, x\in \DeltaN, y\in K, \text{ and  } d_2(x,y)=\inf_{z\in K} d_2(x,z)\}.
\]
Since the map $(K,x)\mapsto \inf_{z\in K} d_2(x,z)$ is continuous 
on $\{K\in \Kconv: K\neq\emptyset\}$, 
this set is closed. Also, for every $K,x$ there is the unique point $y$ such that 
$(K,x,y)\in \cZ$ (e.g., \cite[Lemma~3.1.6]{Pede:Analysis}). 
By compactness, the function $\chi$ that sends $(K,x)$ to the unique $y$ 
such that $(K,x,y)\in \cZ$ is 
continuous. Again by compactness, the map $\Psi(K)=\{(x,y): (K,x,y)\in \cZ\}$ is continuous. 
\end{proof}

For $K\in \Kconv$, $n\in \bbN\cup\{\N\}$,  and $(f,X,u)\in \bfGordu(n)$ let 
$\Pairing(K, (f,X,u))$ be the set of all $h\colon \Delta^{\bbN}\to \bbR^n$ such that 
there exists a continuous affine function $h'\colon K\to \States(X,f,u)$ such that with $\Psi$ as 
in Lemma~\ref{L.dual.1.5} the following diagram commutes
\[
\diagram 
K \drto_{h'}&  & \Delta^{\bbN} \llto_{\Psi(K)}\dlto^h \\
 & \bbR^n. 
\enddiagram
\]
Again the set of all $(K,(f,X,u),h)$ as above is closed and by \cite[Lemma 3.16]{FaToTo2} 
the map $\Pairing$ is Borel. 

   
\begin{definition}
The space $\bfEll$ of Elliott invariants is a subspace of
\[
\bfGordu\times \bfG_a\times \Kconv\times \bigsqcup_{n\in \bbN\cup \{\bbN\}} C(\Delta^{\bbN}, \Delta^n) 
\]
consisting of quadruples  $(G_0,G_1,T,r)$, where $G_0\in \bfGordu$, $G_1\in \bfG_a$, 
$T\in\Kconv$ is a Choquet simplex, and $r\in\Pairing(T,G_0)$. 
By the above and \cite[Lemma 4.7]{FaToTo2}, the set $\bfEll$ is  Borel and therefore it 
is a standard Borel space with the induced Borel structure. 

We say that two such quadruples $(G_0,G_1,T,r)$ and $(G_0',G_1',T',r')$ in $\bfEll$ are \emph{isomorphic} if $G_0\cong G_0'$, $G_1\cong G_1'$, and there is an affine isomorphism $\alpha: T\to T'$ such that we have $\hat \eta\rs T\circ r=r'\circ \alpha\rs T'$, where $\hat\eta:S(G_0)\to S(G_0')$ corresponds to some isomorphism $\eta: G_0\to G_0'$. This is clearly an analytic equivalence relation. 
\end{definition}

\subsection{Computing K-theory} 
\label{S.K}
The isomorphism  relation defined above is clearly analytic and it corresponds to the isomorphism of Elliott invariants. The rest of this section contains the proof of the following theorem.

\begin{theorem} \label{T.Ell}
There  is a Borel map  $\Ell\colon \Gammau\to \bfEll$ such that
$\Ell(\gamma)$ is the Elliott invariant of $C^*(\gamma)$, for all $\gamma\in\Gamma$.
\end{theorem}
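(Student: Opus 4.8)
The plan is to construct the Borel map $\Ell$ componentwise, exhibiting each of the four coordinates of the Elliott invariant—the ordered $K_0$-group with unit, the group $K_1$, the trace simplex $T(C^*(\gamma))$, and the pairing $r$—as a Borel function of $\gamma$, and then to verify that the resulting quadruple lands in $\bfEll$ almost by construction. Since $\Gammau$ parameterizes unital separable \CSTAR algebras via unital $*$-homomorphisms out of the free unital algebra $\Au$, I can work with the canonical dense \CSTAR-countable skeleton $\{f(\mathfrak p_j)\}_j$ and its matrix amplifications throughout. The strategy for each coordinate is to produce a standard Borel space of ``codes'' for the relevant piece of data (projections up to Murray--von Neumann equivalence, unitaries up to homotopy, tracial states, and affine maps between them) and show both that a code can be selected Borel-measurably in $\gamma$ and that the algebraic/order structure on the code is Borel.

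\textbf{Computing $K_0$ and $K_1$.} First I would fix, uniformly in $\gamma$, a countable dense set of projections in each matrix algebra $M_k(C^*(\gamma))$: using functional calculus applied to the dense sequence of polynomial evaluations, one obtains Borel-in-$\gamma$ sequences of genuine projections whose classes exhaust $K_0$. Murray--von Neumann equivalence of two such projections $p,q$ is witnessed by a partial isometry, hence by an element of the algebra approximable from the dense skeleton; this makes the equivalence relation \emph{analytic}, and I would upgrade it to Borel by quantifying over rational-coefficient polynomial witnesses with an explicit norm-estimate (a standard ``$\varepsilon$-close to a genuine witness'' argument). Addition of classes is direct sum of projections, which is Borel on codes, and the positive cone $K_0^+$ is the image of actual projections. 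The order-unit is $[1]_0$, read off from the unit of $C^*(\gamma)$. This produces a Borel map into $\bfGordu$ after a standard Borel selection collapsing the equivalence relation to a set of canonical representatives indexed by an initial segment of $\N\cup\{\N\}$. The treatment of $K_1$ is parallel but uses homotopy classes of unitaries in matrix amplifications of the unitization, with homotopy again witnessed by finite rational paths up to a norm tolerance, yielding a Borel map into $\bfG_a$.

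\textbf{The trace simplex and the pairing.} For $T(C^*(\gamma))$ I would invoke the result recalled above that $T(A)$ is a metrizable Choquet simplex identified with a compact convex subset of $\Delta^{\bbN}$, and the fact from \cite[Lemma 4.7]{FaToTo2} that the Choquet simplexes form a Borel subset of $F(\Delta^{\bbN})$; the assignment $\gamma\mapsto T(C^*(\gamma))\in\Kconv$ is Borel because a trace is determined by its values on the dense skeleton, so the closed set of trace-constraints depends Borel-measurably on $\gamma$ and one applies a Borel selection from $F(\Delta^{\bbN})$. For the coupling map $r_\gamma\colon T\to S(K_0)$, I would use that each trace $\tau$ induces the state $[p]_0\mapsto \tau(p)$ on $K_0$, which is continuous and affine in $\tau$; composing with the retraction $\Psi(T)$ from Lemma~\ref{L.dual.1.5} and checking the commuting-triangle condition places $r_\gamma$ in $\Pairing(T,G_0)$, and Borelness of $\Pairing$ (established above) together with Borelness of the three preceding coordinates gives Borelness of $r_\gamma$.

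\textbf{The main obstacle} will be the passage from \emph{analytic} to \emph{Borel} for the equivalence relations on codes—Murray--von Neumann equivalence of projections and homotopy of unitaries—and the accompanying Borel selection of a canonical index set so that the output genuinely lands in the disjoint-union spaces $\bfGordu$ and $\bfG_a$ with their finite-or-countable underlying sets. The analytic description (``there exists a witness'') is immediate; the difficulty is to replace the existential quantifier over the \CSTAR algebra by an explicit quantifier over a countable dense parameter set with quantitative norm control, so that equivalence becomes a countable intersection of open conditions. I expect this to be handled by the perturbation lemmas for projections and unitaries (small norm perturbations preserve the equivalence class), reducing each witness to a rational-polynomial approximation and thereby exhibiting the relation as Borel; once that is done, a standard uniformization picks Borel representatives and the remaining order- and group-operations are visibly Borel.
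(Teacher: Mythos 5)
Your componentwise plan matches the paper's almost exactly: the paper also builds $\Ell$ coordinate by coordinate, computes $K_0$ from a Borel-in-$\gamma$ dense sequence of projections (the map $\Proj$ of \cite{FaToTo2}) in $C^*(\gamma)\otimes\cK$, shows Murray--von Neumann equivalence is Borel by exactly the witness-with-norm-tolerance trick you describe (an existential quantifier over the dense polynomial skeleton with the bound $1/4$), partitions $\Gammau$ into Borel pieces according to the number of equivalence classes, selects representatives, and passes to the Grothendieck group by a Borel map $\bfS_a\to\bfGord$; the trace simplex and the pairing are handled just as you propose, via \cite[Lemma 3.17]{FaToTo2} and restriction of traces to $\Proj(\gamma)$ followed by the canonical extension to $K_0$.

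The one place you genuinely diverge is $K_1$. You propose to code $K_1$ directly by homotopy classes of unitaries in matrix amplifications of the unitization, with homotopy witnessed by finite rational chains under a norm tolerance. The paper instead invokes Bott periodicity, $K_1(C^*(\gamma))\cong K_0(C((0,1),C^*(\gamma)))$, and the Borel computability of the suspension (\cite[Lemma 3.11]{FaToTo2}), so that the entire $K_1$ computation is absorbed into the already-established $K_0$ machinery. Your route is workable --- path components of the unitary group are norm-open, so homotopy reduces to finite chains of skeleton elements at distance less than $2$, which is a Borel condition --- but it requires you to additionally produce a Borel-in-$\gamma$ dense sequence of unitaries in each amplification and redo the selection argument for a second equivalence relation. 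The paper's reduction buys economy: one Borel $K_0$-functor applied to a Borel suspension map. Your version buys a more concrete, self-contained description of the $K_1$-codes at the cost of duplicated technical work. Either way the conclusion is the same, and the rest of your outline is faithful to the paper's proof.
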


We will start by showing:

\begin{prop} \label{P.K-0} There is a Borel map $\Kzerou\colon \Gammau\to \bfGordu$ such that
\[
\Kzerou(\gamma)\cong (K_0(C^*(\gamma)), K_0(C^*(\gamma))^+,[1_{C^*(\gamma)}]_0)
\]
for all $\gamma$.
\end{prop}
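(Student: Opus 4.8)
The plan is to compute $K_0(C^*(\gamma))$ in three stages, each carried out Borel-uniformly in $\gamma$: first select a countable family of genuine projections over $C^*(\gamma)$ whose Murray--von Neumann classes exhaust $V(C^*(\gamma))$; second, show that Murray--von Neumann equivalence of such projections is a Borel relation; third, assemble the Grothendieck group, its positive cone and its order unit as the quotient of a Borel equivalence relation on $\bbN^2$ and reindex it into $\bfGordu$.

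For the first stage I would pass to matrix amplifications. Fixing a Borel identification of $\mathcal B(H^k)$ with operators on $H$, the entries $\p_j(\gamma)$ arranged into $k\times k$ matrices give, Borel in $\gamma$, a sequence $(a^{k}_{j}(\gamma))_{j}$ dense in $M_k(C^*(\gamma))$. Replacing each $a^k_j$ by its self-adjoint part $b^k_j=(a^k_j+(a^k_j)^*)/2$ and keeping only those indices with $\|(b^k_j)^2-b^k_j\|<1/16$, the spectrum of $b^k_j$ is confined to two clusters near $0$ and near $1$; applying a fixed continuous function $f$ that is $0$ near $0$ and $1$ near $1$ produces a genuine projection $f(b^k_j)$. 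As $\gamma\mapsto f(b^k_j(\gamma))$ is norm-continuous, hence Borel, this yields a Borel sequence $(q_n(\gamma))_n$ of projections over $C^*(\gamma)$. Every projection $p\in M_k(C^*(\gamma))$ is a norm limit of the $a^k_j$, so $f(b^k_j)\to p$ and eventually $\|f(b^k_j)-p\|<1$, whence $f(b^k_j)\sim p$; thus the classes of the $q_n(\gamma)$ exhaust $V(C^*(\gamma))$. For the second stage, the key point is that Murray--von Neumann equivalence admits an approximate description along the dense sequence: for projections $p,q$ amplified into a common $M_k(C^*(\gamma))$ one has $p\sim q$ if and only if for every $\varepsilon>0$ there is a matrix $w$ with entries among the $\p_j(\gamma)$ satisfying $\|w^*w-p\|<\varepsilon$ and $\|ww^*-q\|<\varepsilon$. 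The forward implication is density; the reverse is a standard perturbation argument converting an approximate partial isometry between near-projections into an exact one, together with the fact that projections at distance $<1$ are equivalent. Being a $\forall\varepsilon\,\exists w$ condition over a countable set of Borel functions of $\gamma$, this relation is Borel.

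For the third stage I would realize the Grothendieck group on $\bbN^2$, letting $(m,n)$ code the formal difference $[q_m(\gamma)]-[q_n(\gamma)]$, with $q_0=0$. Using the second stage I choose a Borel index function $\beta$ with $q_{\beta(m,m')}\sim q_m\oplus q_{m'}$, set $(m,n)+(m',n')=(\beta(m,m'),\beta(n,n'))$ and $-(m,n)=(n,m)$, and declare $(m,n)\approx(m',n')$ iff $q_m\oplus q_{n'}\oplus 1_k\sim q_{m'}\oplus q_n\oplus 1_k$ for some $k$; here $1_A$, and hence the amplified units $1_k$, is available and Borel in $\gamma$ because $\gamma\in\Gammau$. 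Then $\approx$ and the operations are Borel, the quotient is $K_0(C^*(\gamma))$, the positive cone $X$ is the set of classes of pairs $(m,0)$, and the order unit is the class of $([1_A],0)$; the order itself is recovered as $x\le y\iff y-x\in X$, so no separate subequivalence relation is needed. Finally, choosing a Borel transversal for $\approx$ and reindexing by the number of classes in $\bbN\cup\{\bbN\}$---the routine Borel-quotient bookkeeping packaged by \cite[Lemma 3.16]{FaToTo2}---transports the group law, the cone and the unit into the disjoint-union space $\bfGordu$, producing the required Borel map.

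The main obstacle is twofold. Conceptually, the crux lies in the exhaustion-and-exactness underlying the first two stages: that the functional-calculus selection captures \emph{every} $K_0$-class, and that the approximate partial-isometry criterion computes Murray--von Neumann equivalence \emph{exactly}; both rest on perturbation lemmas for projections and partial isometries. Technically, the fiddly point is the last step, namely reindexing a Borel family of countable quotient groups of varying and possibly finite cardinality into the standard Borel space $\bfGordu=\bigsqcup_n\bfGordu(n)$ while keeping everything Borel in $\gamma$. I also note, matching the pre-ordered formulation of Theorem~\ref{invariants}(i), that $X$ is produced as the image of $V(C^*(\gamma))$ and treated as the positive cone of the (pre)order.
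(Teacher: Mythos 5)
Your proposal is correct and follows essentially the same route as the paper: a Borel-selected dense sequence of projections (the paper obtains these by applying the function $\Proj$ of \cite[Lemma 3.13]{FaToTo2} to the stabilization $C^*(\gamma)\otimes\cK$ rather than by functional calculus in each matrix amplification), Borelness of Murray--von Neumann equivalence via approximate partial isometries drawn from the dense $*$-polynomial sequence, and a Borel Grothendieck construction followed by selection of class representatives and reindexing into $\bfGordu$. The remaining differences (your direct quotient on $\bbN^2$ with $1_k$ as cancellation term versus the paper's separate lemma producing a Borel map $\bfS_a\to\bfGord$ from the semigroup $V(\gamma)$) are implementation details.
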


For a \CSTAR algebra $A$, let $\sim_A$ denote the \emph{Murray-von Neumann equivalence} of projections in $A$. Therefore $p\sim_A q$ if there is $v\in A$ such that $vv^*=p$ and $v^*v=q$. 
Note that $p\sim_A q$ implies $\phi(p)=\phi(q)$ for every trace $\phi$ of $A$. 
 If $A$ is clear from the context we will simply write $\sim$. Also, following the usual conventions, for $a,b\in\cB(H)$ we write $a\oplus b$ for the element
$$
\begin{pmatrix} a & 0 \\ 0 & b \end{pmatrix}\in M_2(\cB(H)).
$$

For the next Lemma, recall from \cite[Lemma 3.13]{FaToTo2} the Borel function $\Proj:\Gamma\to\Gamma$ which computes, for each $\gamma\in\Gamma$, a sequence of projections that are dense in the set of projections in $C^*(\gamma)$.

\begin{lemma}\label{L.K.1}
(1) The relation $\Upsilon_1\subseteq \Gamma\times\N\times\N$ defined by
$$
\Upsilon_1(\gamma,m,n)\iff \Proj(\gamma)(m) \sim_{C^*(\gamma)} \Proj(\gamma)(n)
$$
is Borel.

(2) The relation $\Upsilon_2\subseteq \Gamma\times\N\times\N\times\N$ defined by
$$
\Upsilon_2(\gamma,m,n,k)\iff \Proj(\gamma)(m)\oplus\Proj(\gamma)(n) \sim_{M_2(C^*(\gamma))} \Proj(\gamma)(k)\oplus 0
$$
is Borel.
\end{lemma}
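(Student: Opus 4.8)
The plan is to reduce the existential quantifier ``$\exists v\in C^*(\gamma)$ with $vv^*=p$ and $v^*v=q$'', which a priori only exhibits $\Upsilon_1$ as analytic, to a \emph{countable} disjunction of Borel conditions, by quantifying instead over the countable dense sequence $(\hat\gamma(j))_{j\in\N}$ generating $C^*(\gamma)$. The mechanism that makes this work is that Murray--von Neumann equivalence is stable under small perturbations: an element that is \emph{almost} a partial isometry implementing the equivalence can be corrected to a genuine one. Since the index $j$ ranges over $\N$, a countable union of Borel sets results, and no projection to an analytic set is ever needed.

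First I would record a perturbation lemma. There is a constant $\delta_0>0$ (one checks $\delta_0=\tfrac14$ works) such that whenever $p,q$ are projections in a C*-algebra $A$ and $b\in pAq$ satisfies $\|bb^*-p\|<\delta_0$ and $\|b^*b-q\|<\delta_0$, then $p\sim_A q$. Indeed, $\|b^*b-q\|<1$ forces $b^*b$ to be invertible in the corner $qAq$, whose unit is $q$; setting $v=b(b^*b)^{-1/2}$, with the inverse square root computed in $qAq$, gives a partial isometry with $v^*v=q$ and $v\in pAq$. Then $vv^*=b(b^*b)^{-1}b^*$ is a projection with $vv^*\le p$, and a direct estimate bounds $\|p-vv^*\|$ by a function of $\delta_0$ that is $<1$ (for $\delta_0=\tfrac14$ one gets $\le\tfrac23$). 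As a subprojection of $p$ lying at distance $<1$ from $p$ must equal $p$, we conclude $vv^*=p$, so $v$ witnesses $p\sim_A q$.

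Next I would establish the key equivalence: with $p=\Proj(\gamma)(m)$ and $q=\Proj(\gamma)(n)$, and writing $b_j=p\,\hat\gamma(j)\,q\in pC^*(\gamma)q$,
\[
p\sim_{C^*(\gamma)}q \iff (\exists j\in\N)\ \bigl(\|b_j b_j^*-p\|<\delta_0\ \text{ and }\ \|b_j^*b_j-q\|<\delta_0\bigr).
\]
The direction $(\Leftarrow)$ is immediate from the perturbation lemma. For $(\Rightarrow)$, if $v$ is a partial isometry with $vv^*=p$ and $v^*v=q$ then $v=pvq$, and choosing $\hat\gamma(j)\to v$ from the dense sequence gives $b_j=p\,\hat\gamma(j)\,q\to pvq=v$, whence $b_jb_j^*\to p$ and $b_j^*b_j\to q$, so the right-hand side holds for all large $j$.

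Finally I would observe that the right-hand side is Borel. Here I invoke the standard facts about the weak Borel structure on $\cB(H)$ recorded in the parameterization results: multiplication, involution, and the norm are Borel, and the maps $\gamma\mapsto\Proj(\gamma)(m)$ and $\gamma\mapsto\hat\gamma(j)$ are Borel. Hence for each fixed $j$ the maps $\gamma\mapsto\|b_jb_j^*-p\|$ and $\gamma\mapsto\|b_j^*b_j-q\|$ are Borel jointly in $(\gamma,m,n)$, so each disjunct cuts out a Borel subset of $\Gamma\times\N\times\N$, and the countable union over $j$ is Borel; this proves (1). Part (2) is the same argument carried out inside $M_2(C^*(\gamma))\subseteq M_2(\cB(H))=\cB(H\oplus H)$: one replaces $p,q$ by the projections $\Proj(\gamma)(m)\oplus\Proj(\gamma)(n)$ and $\Proj(\gamma)(k)\oplus 0$ and quantifies over the countable family of $2\times2$ matrices with entries $\hat\gamma(j_1),\dots,\hat\gamma(j_4)$, noting that matrix operations and the matrix norm reduce to Borel entrywise operations on $\cB(H)$. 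I expect the main obstacle to be the perturbation lemma and its quantitative estimate $\|p-vv^*\|<1$, since this is precisely what converts the genuinely infinitary, merely analytic condition into a countable Borel disjunction; the remaining verifications are routine given the Borel measurability of the algebraic operations from the earlier parameterization results.
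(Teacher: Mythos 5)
Your proof is correct and takes essentially the same route as the paper's: both replace the existential quantifier over $C^*(\gamma)$ by a countable disjunction over the dense sequence of $*$-polynomial evaluations $\p_k(\gamma)$, relying on the perturbation stability of Murray--von Neumann equivalence with the same tolerance $\tfrac14$. The only difference is that you make the perturbation lemma (and the compression $b_j=p\,\hat\gamma(j)\,q$) explicit where the paper leaves it implicit; the treatment of part (2) inside $M_2(C^*(\gamma))$ is likewise the same.
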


\begin{proof}
To see (1), note that
$$
\Upsilon_1(\gamma,m,n)\iff (\exists k) \|\p_k(\gamma)\p_k(\gamma)^*-\Proj(\gamma)(m)\|<\frac 1 4\wedge \|\p_k(\gamma)^* \p_k(\gamma)-\Proj(\gamma)(n)\|<\frac 1 4.
$$
For (2), note that for $m,n,k\in\N$ the maps $\Gamma\to M_2(\cB(H))$:
$$
\gamma\mapsto \Proj(\gamma)(m)\oplus\Proj(\gamma)(n) \text{ and } \gamma\mapsto \Proj(\gamma)(k)\oplus 0
$$
are Borel by \cite[Lemma 3.7]{FaToTo2}. Thus
\begin{align*}
\Upsilon_2(m,n,k)\iff (\exists i)&\|\p_i(M_2(\gamma))\p_i(M_2(\gamma))^*-\Proj(\gamma)(m)\oplus \Proj(\gamma)(n)\|<\frac 1 4\\
\wedge&\|\p_i(M_2(\gamma)\p_i(M_2(\gamma))^*-\Proj(\gamma)(k)\oplus 0\|<\frac 1 4
\end{align*}
gives a Borel definition of $\Upsilon_2$.
\end{proof}

\begin{proof}[Proof of Proposition \ref{P.K-0}]
Note in Lemma~\ref{L.K.1} that for each $\gamma\in\Gammau$, the section $(\Upsilon_1)_\gamma=\{(m,n)\in\N:\Upsilon_1(\gamma,m,n)\}$ defines an equivalence relation denoted $\sim_\gamma$ on $\N$. Let $B_n\subseteq \Gammau$, $(n\in\N\cup\{\infty\}$), be the set of $\gamma\in\Gamma$ such that $(\Upsilon_1)_\gamma$ has exactly $n$ classes. Then $(B_n)$ is a Borel partition of $\Gammau$. On each $B_n$ we can find Borel functions $\sigma_{n,i}:B_n\to\N$, ($0\leq i< n$), selecting exactly one point in each $(\Upsilon_1)_\gamma$-class. Identifying $n\in\N$ with the set $\{0,\ldots, n-1\}$, let $V_0(\gamma)$ (where $\gamma\in B_n$) be the semigroup on $n$ defined by
$$
i+j=k\iff \Upsilon_2(\gamma,\sigma_{n,i}(\gamma),\sigma_{n,j}(\gamma),\sigma_{n,k}(\gamma)).
$$
By \cite[Lemma 3.10]{FaToTo2} there is a Borel map $\Psi:\Gamma\to\Gamma$ such that $C^*(\Psi(\gamma))\simeq C^*(\gamma)\otimes\cK$. We define $V(\gamma)=V_0(\Psi(\gamma))$ and note that this gives us a Borel assignment $B_n\to\bfS(n)$ of semigroup structures on $n$. The $\Kzero$ group of $C^*(\gamma)$ is then the Grothendieck group constructed from $V(\gamma)$ with the order unit being the unique $i\in n$ such that $\sigma_{n,i}(\gamma)\sim_\gamma u(\gamma)$, and so the proof is complete once we prove the next Lemma.
\end{proof}

\begin{lemma}
There is a Borel map $\bfS_a\to\bfGord$ associating to each Abelian semigroup (defined by) $f\in \bfS_a$ the Grothendieck group constructed from $f$.
\end{lemma}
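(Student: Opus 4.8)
The plan is to perform the Grothendieck construction fibrewise over $\bfS_a$ and then to present the resulting quotient group on a fixed underlying set by a Borel selection of class representatives, in exact analogy with the treatment of $V(\gamma)$ in the proof of Proposition~\ref{P.K-0}. Fix $n\in\N\cup\{\N\}$ and $f\in\bfS_a(n)$, and abbreviate $a+_f b=f(a,b)$. On the set $n\times n$ I would introduce the stable-equivalence relation
\[
(a,b)\,E_f\,(c,d)\iff (\exists e\in n)\ f(a+_f d,e)=f(b+_f c,e),
\]
whose quotient is the Grothendieck group: addition is induced coordinatewise, the neutral element is the class of any $(a,a)$, and $-[(a,b)]=[(b,a)]$. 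The positive cone is the image of the semigroup under the Grothendieck map together with $0$, described by the predicate: $[(a,b)]\geq 0$ iff $(\exists s,e\in n)\ f(a,e)=f(b+_f s,e)$ or $(\exists e\in n)\ f(a,e)=f(b,e)$.

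First I would verify that each of these ingredients is Borel in $f$. Viewed as a subset of $\bfS_a\times(n\times n)\times(n\times n)$, the relation $E_f$ is a countable union (over $e\in n$) of the clopen conditions $\{f:f(a+_f d,e)=f(b+_f c,e)\}$, hence Borel; the same countable-union-of-clopen remark applies to the coordinatewise group law and to the cone predicate above.

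The heart of the argument is to replace the varying quotient $(n\times n)/E_f$ by a fixed underlying set in a Borel way. For $m\in\N\cup\{\N\}$ let $C_m\subseteq\bfS_a$ be the set of those $f$ for which $E_f$ has exactly $m$ classes; as in the construction of the sets $B_n$ in Proposition~\ref{P.K-0}, the number of classes is computed from the Borel relation $E_f$, so the $C_m$ form a Borel partition of $\bfS_a$. On each $C_m$ I would build Borel selectors $\tau_{m,i}\colon C_m\to n\times n$, $0\le i<m$, choosing one representative per $E_f$-class — for instance, letting $\tau_{m,i}(f)$ be the $i$-th pair, in a fixed enumeration of $n\times n$, that is $E_f$-inequivalent to all $\tau_{m,j}(f)$ with $j<i$; this is Borel by the same bookkeeping used for the $\sigma_{n,i}$. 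Transporting the structure through the $\tau_{m,i}$ then produces, for $f\in C_m$, a pair $(g,X)\in\bfG_a(m)\times\mathcal P(m)$, where, writing $\tau_{m,i}(f)=(a,b)$, $\tau_{m,j}(f)=(c,d)$, $\tau_{m,k}(f)=(p,q)$, one sets $g(i,j)=k\iff (f(a,c),f(b,d))\,E_f\,(p,q)$ and puts $i\in X$ iff $(a,b)$ satisfies the cone predicate. Borelness of $g$ and $X$ follows from Borelness of $E_f$, of the group law, of the cone predicate, and of the selectors.

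Finally I would check that the output actually lands in $\bfGord$, i.e.\ that $X+_g X\subseteq X$, $-X\cap X=\{0\}$ and $X-X=m$; closure and $X-X=m$ are immediate from the construction, and antisymmetry amounts to the statement that the cone consists precisely of images of semigroup elements. I expect the genuine obstacle to lie not in the algebra but in the uniform quotient bookkeeping: one must ensure simultaneously that $C_m$ is Borel, that the selectors $\tau_{m,i}$ are Borel, and that $g$ and $X$ depend Borel-measurably on $f$ across the stratification by number of classes — precisely the type of ``quotient by a Borel equivalence relation, done uniformly in the parameter'' argument already carried out for $V(\gamma)$. I would also flag that the antisymmetry axiom $-X\cap X=\{0\}$ is the one place where some cancellation is tacitly used; it holds for the semigroups $V(\gamma)$ to which the lemma is applied, and is where I would take the most care.
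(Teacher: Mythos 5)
Your proof follows essentially the same route as the paper's: the same stable-equivalence relation on pairs $(a,b)\,E_f\,(c,d)\iff(\exists e)\,a+_fd+_fe=b+_fc+_fe$, the same stratification of $\bfS_a(n)$ into Borel pieces according to the number of $E_f$-classes, and the same Borel selection of class representatives to transport the quotient structure onto a fixed underlying set. You in fact supply more detail than the paper does --- in particular your explicit treatment of the positive cone and your caveat that $-X\cap X=\{0\}$ tacitly uses cancellation (true for the semigroups $V(\gamma)$ to which the lemma is applied) address a point the paper's proof passes over in silence.
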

\begin{proof}
It is enough to construct a Borel $\bfS_a(n)\to\bfGord$ as required for each $n\in\N\cup\{\N\}$. We follow the description of the Grothendieck group given in \cite[V.1.1.13]{blackadar}. Defining
$$
P=\{(f,(i,j),(k,l))\in\bfS_a\times n^2\times n^2: (\exists m) i+_f l+_f+m=k+_f j+_f m\},
$$
we have that that the section $P_f$ is an equivalence relation on $n^2$ for all $f\in \bfS_a$. Write $\bfS_a(n)$ as a disjoint union of Borel pieces $B_k$ ($k\in\N\cup\{\N\}$)such that $f\in B_k$  if and only if $P_f$ has exactly $k$ classes. We can then find on each piece $B_k$ Borel functions selecting an element in each $P_f$ class, and from this selection the Grothendieck group of $f$ can be defined on $k$ in a Borel way.
\end{proof}

\begin{corollary}
There is a Borel map $\Kzero:\Gamma\to\bfGord$ such that
$$
\Kzero(\gamma)\simeq (K_0(C^*(\gamma)),K_0^+(C^*(\gamma)))
$$
\end{corollary}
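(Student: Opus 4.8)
The plan is to rerun the construction from the proof of Proposition~\ref{P.K-0}, observing that unitality was used there only to single out the order unit, so that dropping the order unit immediately extends the map from $\Gammau$ to all of $\Gamma$. First I would note that the relations $\Upsilon_1$ and $\Upsilon_2$ of Lemma~\ref{L.K.1} are already defined on $\Gamma\times\N\times\N$ and $\Gamma\times\N\times\N\times\N$ respectively, without any reference to a unit. Hence for every $\gamma\in\Gamma$ the section $(\Upsilon_1)_\gamma$ is an equivalence relation $\sim_\gamma$ on $\N$, and partitioning $\Gamma$ into the Borel pieces $B_n$ ($n\in\N\cup\{\infty\}$) according to the number of $\sim_\gamma$-classes and choosing Borel selection functions $\sigma_{n,i}$ exactly as before gives a Borel assignment $\gamma\mapsto V_0(\gamma)\in\bfS_a(n)$ whose addition is read off from $\Upsilon_2$.

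Next I would compose with the stabilization map $\Psi\colon\Gamma\to\Gamma$ of \cite[Lemma~3.10]{FaToTo2}, for which $C^*(\Psi(\gamma))\simeq C^*(\gamma)\otimes\cK$, and set $V(\gamma)=V_0(\Psi(\gamma))$. This produces a Borel map $\Gamma\to\bfS_a$ sending $\gamma$ to an isomorphic copy of the Murray-von Neumann semigroup of projections in the stabilization $C^*(\gamma)\otimes\cK$; its Grothendieck group is $K_0(C^*(\gamma))$ and the image of the semigroup is the positive cone $K_0^+(C^*(\gamma))$. Composing with the Borel Grothendieck-group map of the preceding Lemma then yields the desired Borel map $\Kzero\colon\Gamma\to\bfGord$, with $\Kzero(\gamma)$ the Grothendieck group of $V(\gamma)$ equipped with the positive cone given by the image of $V(\gamma)$, that is, precisely $(K_0(C^*(\gamma)),K_0^+(C^*(\gamma)))$.

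I do not expect a genuine obstacle here, since all of the analytic content---the Borelness of the partition, of the selection functions, of the semigroup operation coming from $\Upsilon_2$, and of the Grothendieck construction---has already been established in Proposition~\ref{P.K-0} and the preceding Lemma. The one thing to verify is that the earlier argument used the unit of $C^*(\gamma)$ nowhere except in the final choice of order unit, which is clear from the form of the construction; this is exactly what permits the map to be defined on all of $\Gamma$ rather than only on $\Gammau$.
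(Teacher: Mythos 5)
There is a genuine gap. Your premise---that unitality enters the construction of Proposition~\ref{P.K-0} only through the choice of order unit---is false, and the failure is exactly at the step you wave through: for a non-unital \CSTAR algebra $A$, the Grothendieck group of the Murray--von Neumann semigroup $V(A)$ of projections in $A\otimes\cK$ is \emph{not} in general $K_0(A)$. The group $K_0(A)$ is by definition the kernel of $K_0(\tilde A)\to K_0(\C)$ for the unitization $\tilde A$, and classes in it are typically formal differences $[p]-[q]$ of projections in matrices over $\tilde A$ that do not come from projections over $A$ itself. A concrete counterexample to your identification is $A=C_0(\R^2)$: by Bott periodicity $K_0(A)\cong\Z$, generated by the Bott projection in $M_2(\tilde A)$, yet $A\otimes\cK$ contains no nonzero projections at all (a projection-valued function on the connected space $\R^2$ vanishing at infinity is identically zero), so $V(A)=\{0\}$ and your construction would output the trivial group. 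Thus the map you define on $\Gamma\setminus\Gammau$ is Borel but computes the wrong invariant.

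The paper's proof takes the route your argument is missing: it first applies the Borel unitization map of \cite[Lemma 3.12]{FaToTo2} to pass from $\gamma$ to a code for $\tilde C^*(\gamma)$, then invokes Proposition~\ref{P.K-0} for the unital algebra $\tilde C^*(\gamma)$, and finally recovers $K_0(C^*(\gamma))$ from $K_0(\tilde C^*(\gamma))$ by quotienting out the subgroup generated by $[1_{\tilde C^*(\gamma)}]$ (equivalently, splitting off the $\Z$ summand coming from the scalars). Each of these steps is Borel, which is all the corollary needs. If you want to repair your write-up, replace the direct Grothendieck construction on $\Gamma$ by this unitize-then-quotient scheme; the part of your argument that reads off the positive cone as the image of the projection semigroup of $C^*(\gamma)\otimes\cK$ can be retained, since that is indeed how $K_0^+$ is defined in the non-unital case.
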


\begin{proof}
By \cite[Lemma 3.12]{FaToTo2} the unitization $\tilde C^*(\gamma)$ of $C^*(\gamma)$ is obtained via a Borel function, and by the above proof so is $K_0(\tilde
C^*(\gamma))$. Then $K_0(C^*(\gamma))$ is isomorphic to the quotient
of $K_0(\tilde C^*(\gamma))$ by its subgroup generated by the image
of the identity in $\tilde C^*(\gamma)$. 
\end{proof}

\begin{prop}
\label{P.K-1} There is a Borel map $\Kone\colon \Gamma\to \bfG$ such
that
\[
\Kone(\gamma)\cong K_1(C^*(\gamma))
\]
for all $\gamma$.
\end{prop}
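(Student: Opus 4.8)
The plan is to mirror the proof of Proposition~\ref{P.K-0}, replacing projections and Murray--von Neumann equivalence by invertibles and homotopy. First I would reduce to the unital case: since $K_1(A)\cong K_1(\tilde A)$ for every \CSTAR algebra $A$ (see \cite{RoLaLa:Introduction}) and the unitization $\tilde C^*(\gamma)$ is computed by a Borel function by \cite[Lemma 3.12]{FaToTo2}, I may work inside the unital algebra $D=\tilde C^*(\gamma)$ and use the description $K_1(D)=\mathrm{GL}_\infty(D)/\mathrm{GL}_\infty(D)_0$, where $\mathrm{GL}_\infty(D)=\bigcup_n \mathrm{GL}_n(D)$ under the embeddings $x\mapsto x\oplus 1$. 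Using the Borel dense sequence of elements of $C^*(\gamma)$, the Borel matrix amplifications of \cite[Lemma 3.7]{FaToTo2}, and the Borel invertibility test (namely $x\in M_n(D)$ is invertible iff $(\exists i)\,\|x\q_i-1\|<1\wedge\|\q_i x-1\|<1$ for a dense sequence $(\q_i(\gamma))_i$ in $M_n(D)$), I would produce for each $n$ a Borel sequence of invertibles dense in $\mathrm{GL}_n(D)$. Since $\mathrm{GL}_n(D)$ is open in the Banach space $M_n(D)$, it is locally path connected, its path components are open, and such a dense sequence meets every component and hence represents every class of $K_1(D)$.

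The heart of the argument is a Borel description of homotopy, that is, of membership in the identity component $\mathrm{GL}_n(D)_0$. Here I would use that $\mathrm{GL}_n(D)_0$ \emph{equals} the set $P$ of finite products $\exp(c_1)\cdots\exp(c_k)$ with $c_j\in M_n(D)$: these products form an open, hence clopen, subgroup of the connected set $\mathrm{GL}_n(D)_0$, so they exhaust it, and in particular $\mathrm{GL}_n(D)_0$ is closed in $\mathrm{GL}_n(D)$. Approximating each $c_j$ by a dense element and using closedness, one gets, for invertible $a,b$, that $b$ lies in the path component of $a$ iff $b$ is a norm-limit of elements $a\exp(\q_{i_1})\cdots\exp(\q_{i_k})$ with the $\q_i=\q_i(\gamma)$ drawn from the dense sequence (left translation by $a$ is a homeomorphism, and a coset of a closed subgroup is closed). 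This is a Borel condition: the exponential $a\mapsto\exp(a)=\sum_k a^k/k!$ is a norm-limit of the Borel maps $a\mapsto\sum_{k\le m}a^k/k!$ and is therefore Borel (the $*$-algebra operations and the norm being Borel in this framework, \cite{Kec:C*}), and the predicate is the $\Pi^0_2$ condition displayed below, a countable intersection over the error of countable unions of Borel inequalities. Hence the $K_1$-equivalence
\[
x\sim_{K_1}y\iff(\exists N)(\forall\e>0)(\exists k)(\exists i_1,\dots,i_k)\ \bigl\|(y\oplus 1)-(x\oplus 1)\exp(\q_{i_1})\cdots\exp(\q_{i_k})\bigr\|<\e
\]
(with the sizes of the padding blocks suppressed, $N\ge\max(n,m)$) is a Borel relation on the indices of the enumerated invertibles, and requires no Borel inverse.

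With the Borel relation $\sim_{K_1}$ and the Borel operation $(x,y)\mapsto x\oplus y$ in hand, the group is assembled exactly as in Proposition~\ref{P.K-0}: partition $\Gamma$ into Borel pieces $B_n$ on which $\sim_{K_1}$ has exactly $n$ classes, choose Borel functions selecting one representative invertible per class, and define on the index set the multiplication $[x]\cdot[y]=[x\oplus y]$, inverse $[x]^{-1}=[x^*]$, and identity $[1]$, reading off each value through the Borel selection and the Borel relation $\sim_{K_1}$. Since every class already has an invertible representative, no Grothendieck completion is needed, and the standard rotation homotopy shows the resulting group is abelian; this yields the desired Borel map $\Kone\colon\Gamma\to\bfG$ (landing in fact in $\bfG_a$) with $\Kone(\gamma)\cong K_1(C^*(\gamma))$.

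The main obstacle is the Borel detection of path-connectedness in $\mathrm{GL}_n(D)$, which is a genuinely global condition; the resolution is the exponential characterization of $\mathrm{GL}_n(D)_0$ together with its closedness, which converts connectedness into a $\Pi^0_2$ approximation condition over the dense sequence. A secondary point worth flagging is that we only require $\Kone(\gamma)$ to be \emph{isomorphic} to $K_1(C^*(\gamma))$, not natural in $\gamma$; this frees us from tracking the Bott map, and means that, should a Borel model of the suspension $S\,C^*(\gamma)\cong C_0(0,1)\otimes C^*(\gamma)$ be available, one could alternatively set $\Kone(\gamma)=\Kzero(S\,C^*(\gamma))$ and invoke Bott periodicity, reusing the already-established Borel computability of $K_0$.
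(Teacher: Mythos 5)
Your argument is essentially correct, but it takes a genuinely different and much longer route than the paper. The paper's entire proof is the reduction you flag only as an afterthought in your last sentence: $K_1(C^*(\gamma))\cong K_0(C((0,1))\otimes C^*(\gamma))$, and a Borel model of the suspension \emph{is} available, namely \cite[Lemma 3.11]{FaToTo2}, so one simply composes it with the already-established Borel computation of (non-unital) $K_0$ from Proposition~\ref{P.K-0} and its corollary. That reduction buys a two-line proof at the price of outsourcing all the work to the suspension functor and the $K_0$ machinery. Your direct construction --- dense sequences of invertibles in $\mathrm{GL}_n(\tilde C^*(\gamma))$, the Borel invertibility test, and the identification of $\mathrm{GL}_n(D)_0$ with the clopen subgroup generated by exponentials so that homotopy becomes a $\Pi^0_2$ approximation condition over the dense sequence --- is self-contained and yields an explicit Borel presentation of the $K_1$-classes and of the group operation $[x]\cdot[y]=[x\oplus y]$ with $[x]^{-1}=[x^*]$ (which is legitimate, since $x^*=x^{-1}(xx^*)$ and $xx^*$ is a positive invertible, hence connected to $1$). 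The only points worth double-checking in a write-up are (i) that the closedness of the coset $a\,\mathrm{GL}_n(D)_0$ is closedness \emph{in} $\mathrm{GL}_n(D)$, not in $M_n(D)$, so the limit $b$ must itself be assumed invertible for the approximation criterion to characterize membership --- which it is in your setting --- and (ii) that the selection of one representative per class on each Borel piece $B_n$ is justified exactly as in the proof of Proposition~\ref{P.K-0}. Neither is a gap; both approaches are valid, and yours has the side benefit of not requiring naturality of Bott periodicity or any Borel model of $C_0((0,1))\otimes A$.
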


\begin{proof} By  Bott periodicity, $K_1(C^*(\gamma))\cong
K_0(C((0,1),  A))$ and by \cite[Lemma 3.11]{FaToTo2} and
Proposition~\ref{P.K-0} the right-hand side can be computed by a
Borel function.
\end{proof}

\begin{proof}[Proof of Theorem~\ref{T.Ell}]
The computation of $K$-theory  is  Borel by Proposition~\ref{P.K-0} and
Proposition~\ref{P.K-1}. By  \cite[Lemma 3.17]{FaToTo2} the computation of the tracial
simplex $\bbT(\gamma)\cong T(C^*(\gamma))$ is Borel as well. Since $\phi\in \bbT(\gamma)$
is identified with a continuous map on a dense subset of $C^*(\gamma)$, by restricting this map
to $\Proj(\gamma)$ and then composing with the embedding of $\Proj(\gamma)$
into $K_0(\gamma)$ we obtain the restriction of the coupling map $r_{C^*(\gamma)}$ to the
positive part of $K_0(\gamma)$. The coupling map is now canonically extended to $K_0(\gamma)$. 
\end{proof} 

In Section 4 of \cite{FaToTo2} we defined an alternative space of Choquet simplexes and 
showed that it is weakly equivalent to the more straightforward one used 
above.


\section{The Cuntz semigroup}
\label{S.Cuntz}

In this section we prove that the Cuntz semigroup of a separable \CSTAR algebra is Borel computable, as is a related invariant, the radius of comparison.  The relevance of the Cuntz semigroup to \CSTAR algebra classification was demonstrated in \cite{Toms}, where it was used to distinguish simple unital separable nuclear \CSTAR algebras with identical Elliott invariants;  see also \cite{tcmp}.  We review the basic properties of Cuntz semigroups below.  A good general reference is \cite{apt}.

\subsection{Basics and sup-dense sets}\label{cuntzbasics}
Let $A$ be a \CSTAR algebra. Let us consider on
$(A\otimes\mathcal K)_+$ the relation $a\precsim b$ if $v_nbv_n^*\to a$ for some sequence $(v_n)$ in $A\otimes\mathcal K$.
Let us write $a\sim b$ if $a\precsim b$ and $b\precsim a$. In this case we say that $a$ is Cuntz equivalent to $b$.
Let $Cu(A)$ denote the set $(A\otimes \mathcal K)_+/\sim$ of Cuntz equivalence classes. We use $[a]$ to denote the class of $a$ in $Cu(A)$.  It is clear that
$[a] \leq [b] \Leftrightarrow a\precsim b$ defines an order on $Cu(A)$. We also endow $Cu(A)$
with an addition operation by setting $[a]+[b]:=[a'+b']$, where
$a'$ and $b'$ are orthogonal and Cuntz equivalent to $a$ and $b$ respectively (the choice of $a'$ and $b'$
does not affect the Cuntz class of their sum). 

The semigroup $Cu(A)$ is an object in a category of ordered Abelian monoids denoted by ${\mathbf{Cu}}$, a category in which the relation of order-theoretic compact containment plays a significant role, see \cite{cei}.  Let $T$ be a preordered set with $x,y \in T$.  We say that $x$ is compactly contained in $y$---denoted by $x \ll y$---if for any increasing sequence $(y_n)$ in $T$ with supremum $y$, we have $x \leq y_{n_0}$ for some $n_0 \in \mathbb{N}$.   
An object $S$ of $\mathbf{Cu}$ enjoys the following properties:
\begin{enumerate}[\indent $\bf P 1$]
\item $S$ contains a zero element; \vspace{1mm}
\item the order on $S$ is compatible with addition:  $x_1 + x_2
\leq y_1 + y_2$ whenever $x_i \leq y_i , i \in \{1, 2\}$; \vspace{1mm}
\item every countable upward directed set in $S$ has a supremum; \vspace{1mm}
\item the set $x_\ll = \{y \in S \ |  \ y \ll x\}$ is nonempty and upward directed with respect to both
$\leq$ and $\ll$, and contains a sequence $(x_n)$ such that
$x_n \ll x_{n+1}$ for every $n \in \mathbb{N}$ and $\sup_n x_n = x$; \vspace{1mm}
\item the operation of passing to the supremum of a countable upward directed set and the relation
$\ll$ are compatible with addition:  if $S_1$ and $S_2$ are countable upward directed sets in $S$, then
$S_1 + S_2$ is upward directed and $\sup (S_1 + S_2) = \sup S_1 + \sup S_2$, and if $x_i \ll y_i$ for $i \in \{1, 2\}$, then
$x_1 + x_2 \ll y_1 + y_2$ .
\end{enumerate}

\noindent
Here we assume further that $0 \leq x$ for any $x \in S$.  This is always the case for $Cu(A)$.
For $S$ and $T$ objects of $\textbf{Cu}$, the map $\phi\colon S\to T$ is a morphism in the category $\mathbf{Cu}$ if
\begin{enumerate}[\indent $\bf M 1$]
\item $\phi$ preserves the relation $\leq$; \vspace{1mm}
\item $\phi$ is additive and maps $0$ to $0$; \vspace{1mm}
\item $\phi$ preserves the suprema of increasing sequences; \vspace{1mm}
\item $\phi$ preserves the relation $\ll$. \vspace{1mm}
\end{enumerate}

\begin{definition} Let $S \in \mathbf{Cu}$.  A countable subset $D$ of $S$ is said to be \emph{sup-dense} if each $s \in S$ is the supremum of a $\ll$-increasing sequence in $D$.  We then say that $S$ is countably determined.  (Here by $\ll$ we mean the relation $\ll$ on $D$ inherited from $S$, i.e., $d_1\ll d_2$ in $D$ iff $d_1 \ll d_2$ in $S$.)
\end{definition}

\begin{definition}\label{Cu0}
Let $\Cu_0$ denote the category of pairs $(S,D)$, where $S$ is a countably determined 
element of $\mathbf{Cu}$, and $D$ is a distinguished sup-dense subset of $S$ which is moreover a semigroup with the binary operation inherited from $S$.  We further assume $D$ to be equipped with the relations $\leq$ and $\ll$ inherited from $S$.
\end{definition}

An element $x$ of $S \in \mathbf{Cu}$ such that $x \ll x$ is \emph{compactly contained in itself}, 
or briefly \emph{compact}.  
If $(S,D) \in \Cu_0$ then $D$ automatically contains all compact elements.



\subsection{A standard Borel space for $\mathbf{Cu}$}
Let $\BCu$ be the space of triples $(\oplus,\lCu,\ll)$ in $\N^{\N\times\N}\times\mathcal P(\N\times\N)\times\mathcal P(\N\times\N)$ with the following properties:
\begin{enumerate}
\item $(\bbN,\oplus, \lCu,\ll)$ is an 
ordered semigroup under the order $\lCu$ (we will use $a,b,c\ldots$ to represent elements of the semigroup);
\item  $\ll$ is a transitive antisymmetric relation with the property that $a \ll b$ and $c \ll d$ implies $a \oplus c \ll b \oplus d$;
\item $a \ll b$ implies $a \lCu b$;
\item for each $a$ in the semigroup, there is some $b \ll a$, and if $a$ does not satisfy $a \ll a$, then the set of all such $b$ is upward directed and has no maximal element.
\end{enumerate}
({\bf Warning:}  $\ll$ here is {\bf not} defined in terms of $\lCu$ as in our discussion of the 
 Cuntz semigroup, but rather is just some other relation finer than $\leq$.  It will coincide with the Cuntz semigroup definition in the case that an element of $\BCu$ really is a sup-dense subsemigroup of an element of the category $\mathbf{Cu}$.)
We can define a map $\Phi:\Cu_0\to \BCu$ in an obvious way:  send $(S,D)$ to the triple $(\oplus, \lCu, \ll)$ corresponding to $D$ ($D=\{d_n: n\in \bbN\}$ is  the ordered semigroup on $\bbN$
defined by $m\oplus n=k$ if and only if $d_m + d_n = d_k$,  $m\lCu n$ if and only if 
$d_m \lCu d_n$ and $m\ll n$ if and only if $d_m\ll d_n$). 

If $D \in \BCu$, we let $D^\nearrow$ denote the set of $\ll$-increasing sequences in $D$.  Define an equivalence relation on $\approx$ on $D^\nearrow$ by
\[
(x_n) \approx (y_n) \Leftrightarrow (\forall m)(\exists n) \ x_m \ll y_n \ \mathrm{and} \ y_m \ll x_n.
\]
Equip $D^\nearrow$ with the relations 
\[
(x_n) \leq^\nearrow (y_n) \Leftrightarrow (\forall n)(\exists m) x_n \leq y_m
\]
and
\[
(x_n) \ll^\nearrow (y_m) \Leftrightarrow (\exists m_0)(\forall n) x_n \lCu y_{m_0}.
\]
Notice that 
\[
(x_n) \leq^\nearrow (y_n) \wedge (y_n) \leq^\nearrow (x_n) \Leftrightarrow (x_n) \approx (y_n).
\]   
Define $(x_n) \oplus^\nearrow (y_n) = (x_n \oplus y_n)$, and set 
\[
W(D) = D^\nearrow/\approx\qquad\text{and}\qquad W(S)= S^\nearrow/\approx.
\]
  Note that the operation $\oplus$ and the relations $\leq^\nearrow$ and $\ll^\nearrow$ drop to an operation $+$ and relations $\precsim$ and $\ll$ on $W(D)$, respectively. 

If $(S,D)$ in $\Cu_0$, then the semigroup $D$ is an element of the category {\bf PreCu} introduced in \cite{abp:comp}, and $S$ is a completion of $D$ in {\bf Cu} in the sense of \cite[Definition 3.1]{abp:comp}.  An appeal to \cite[Lemma 4.2]{abp:comp} and \cite[Theorem 4.8]{abp:comp} shows that $S$, too, is a completion of $D$ in {\bf Cu}, whence
\[
W(D) \cong W(S) \cong S
\]
in {\bf Cu}.

\begin{remark}
The space $\BCu$ certainly does not consist only of sup-dense subsemigroups of elements of $\mathbf{Cu}$, but it does contain all such sets.  It is not clear how one might define a standard Borel space of all sup-dense subsemigroups of elements of $\mathbf{Cu}$, but $\BCu$ will suffice for our purposes.
\end{remark}

\subsection{The structure and morphisms of $W(D)$}
  Let $\cY$ be the Borel space of all functions from $\bbN$ to the Baire space $\bbN^\bbN$. 
Since $\alpha\in \cY$ is a map from $\bbN\to \bbN^{\bbN}$ and 
the elements of $\BCu$ have $\bbN$ as the underlying set, 
if $D_1$ and $D_2$ in $\BCu$ are fixed then $\alpha\in \cY$ codes a map from 
$D_1$ to $D_2^{\bbN}$.  We shall identify $\alpha$ with this map whenever $D_1$ and $D_2$
are clear from the context.  
 The set of all triples $(D_1,D_2,\alpha)$ such that the range of $\alpha$ is included in $D_2^\nearrow$  is a closed subset of $\BCu^2\times \cY$.  
To each $D \in \BCu$ we associate a map $\eta_D:D \to D^\nearrow$ (or simply $\eta$ if $D$ is clear from the context) as follows: Select, in a Borel manner, a sequence $\eta_D(a) = (a_n)$ which is cofinal in $\{b \in D \ | \ b \ll a \}$.  The association $D \mapsto \eta_D$ is then Borel.
If $F\colon W(D_1)\to W(D_2)$ is a semigroup homomorphism preserving $\leq$ and $\ll$ and $\alpha \in \cY$ then 
 we say that $\alpha$ \emph{codes  $F$}
 if $[\alpha(a)]=F[\eta(a)]$ for all $a\in D_1$. Note that $\alpha$ really codes the restriction of 
 $F$ to $\eta(D_1)$, but, as we shall see, $F$ is completely determined by this restriction if $W(D_1)$ and $W(D_2)$ are in the category $\mathbf{Cu}$.

\begin{lemma}\label{wayless}
Let $(S,D) \in \Cu_0$.  Then $(a_n) \ll^\nearrow (b_n)$ in $D^\nearrow$ if and only if $[(a_n)] \ll [(b_n)]$ in $W(D) \cong S$, where $\ll$ is the relation of order-theoretic compact containment inherited from the relation $\leq$ on $S$ (see Subsection \ref{cuntzbasics}).
\end{lemma}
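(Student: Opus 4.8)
The plan is to prove the two implications separately, after fixing the translation between sequences and their limits. Under the isomorphism $W(D)\cong S$ a class $[(a_n)]$ corresponds to $\sup_n a_n$ in $S$, and since $(a_n)$ and $(b_n)$ are $\ll$-increasing in $D$ they are $\ll$-increasing in $S$ (the relations on $D$ being inherited from $S$ by definition of $\Cu_0$); hence $a:=\sup_n a_n$ and $b:=\sup_n b_n$ exist by $\mathbf{P3}$ and are genuine suprema of increasing sequences in $S$. So the lemma reduces to showing that $(a_n)\ll^\nearrow(b_n)$ holds exactly when $a\ll b$ in $S$, with $\ll$ the compact containment of Subsection~\ref{cuntzbasics}.

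For the implication from $a\ll b$ to $(a_n)\ll^\nearrow(b_n)$, I would feed the increasing sequence $(b_n)$, whose supremum is $b$, directly into the definition of compact containment: this produces an index $m_0$ with $a\le b_{m_0}$. Since $a_n\le a$ for every $n$, we get $a_n\le b_{m_0}$ for all $n$, which is precisely $(a_n)\ll^\nearrow(b_n)$. This direction is immediate and uses nothing beyond the definition of $\ll$.

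The converse carries the content. Assume $(a_n)\ll^\nearrow(b_n)$, so there is $m_0$ with $a_n\le b_{m_0}$ for all $n$; passing to the supremum gives $a\le b_{m_0}$. Because $(b_n)$ is $\ll$-increasing we have $b_{m_0}\ll b_{m_0+1}$, and the trivial left monotonicity of compact containment ($u\le v\ll w\Rightarrow u\ll w$, read off directly from the definition by applying it to any increasing sequence with supremum $w$) yields $a\ll b_{m_0+1}$. It then remains to push this up to $b$: from $a\ll b_{m_0+1}$ and $b_{m_0+1}\le b$ one wants $a\ll b$.

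This right monotonicity, $u\ll v\le w\Rightarrow u\ll w$, is the main obstacle. It is genuinely false for the purely sequential notion of compact containment in an arbitrary poset with countable increasing suprema—indeed every naive attempt to verify it reduces to an instance of itself—so here one must use that $S$ is an object of $\mathbf{Cu}$. The decisive input is $\mathbf{P4}$: every element is the supremum of a rapidly increasing sequence, which makes $S$ an $\omega$-continuous structure in which the sequential relation $\ll$ coincides with the domain-theoretic way-below relation and is consequently upward closed on the right. I would invoke this standard property of $\ll$ in $\mathbf{Cu}$ (derived from $\mathbf{P1}$--$\mathbf{P5}$ in \cite{cei}; see also the survey \cite{apt}) to conclude $a\ll b$, finishing the forward direction and hence the lemma.
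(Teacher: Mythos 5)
Your proof is correct, but it is organized differently from the paper's. You transport everything to $S$ at the outset via the identification $[(x_n)]\mapsto\sup_n x_n$ (the same identification $W(D)\cong S$ the paper extracts from \cite{abp:comp}) and reduce the lemma to $(a_n)\ll^\nearrow(b_n)\iff\sup_n a_n\ll\sup_n b_n$ in $S$, whereas the paper stays inside $W(D)$ and manipulates classes of sequences throughout. In the direction from compact containment to the sequential relation your argument is genuinely shorter: you test $a\ll b$ against the increasing sequence $(b_n)$ itself, while the paper manufactures the test sequence $([\eta(b_j)])_j$ and must argue that, after adjusting indices, it has supremum $[(b_m)]$. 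In the other direction the two proofs lean on different external facts: the paper verifies the definition of $\ll$ against an arbitrary increasing sequence $([(c_n^j)])_j$ with supremum $[(b_n)]$, quoting from \cite{cei} the existence of a diagonal subsequence $(c^j_{n_j})\approx(b_m)$; you instead reduce to $a\le b_{m_0}\ll b_{m_0+1}\le b$ and quote upward monotonicity of $\ll$ on the right ($u\ll v\le w\Rightarrow u\ll w$). You are right to flag that last step as the real content --- it does not follow formally from the ``supremum equal to $y$'' phrasing of compact containment alone --- but it is a standard property of $\ll$ for objects of $\mathbf{Cu}$, and the paper itself uses exactly this monotonicity without comment in the proof of Proposition \ref{rccountable} (the step from $(n+1)x_k+me\ll(n+1)x+me\le ny$ to $(n+1)x_k+me\ll ny$), so your citation of it is no worse than the paper's own practice. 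What the paper's route buys is that this implication is carried out directly from the definition of $\ll$ in $W(D)$ plus the \cite{cei} diagonal-sequence fact; what yours buys is a cleaner reduction, at the cost of importing the monotonicity of $\ll$ as a black box.
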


\begin{proof}
Suppose first that $(a_n) \ll^\nearrow (b_m)$, and fix $m_0$ such that $a_n \lCu b_{m_0}$ for all $n$.  We must prove that if, for fixed $j$, $(c_n^j) \in D^\nearrow$, and if moreover $[(c_n^j)]$ is a $\leq$-increasing sequence in $j$ with supremum $[(b_n)]$, then $[(a_n)] \leq [(c_n^{j_0})]$ for some $j_0 \in \mathbb{N}$.  First we recall (see the proof of the existence of suprema in inductive limits of Cuntz semigroups in \cite{cei}) that for such $(c_n^j)$, there is a sequence of natural numbers $(n_j)$ with the property that $(c_{n_j}^j) \approx (b_m)$.  In particular, there is $j_0$ such that $b \ll c_{n_{j_0}}^{j_0}$.  Since $(c_k^{j_0}) \in D^\nearrow$, we have 
\[
a_n \ll b_{m_0} \ll c_{n_{j_0}}^{j_0}
\]
for all $n\in\N$, and so $(a_n) \ll^\nearrow (c_k^{j_0})$.  This implies $[(a_n)] \leq [(c_k^{j_0})]$, as required.

For the converse, assume that $[(a_n)] \ll [(b_m)]$.  Since $b_m \ll b_{m+1}$, we know that for any element of the sequence $\eta(b_m)$, there is an element of the sequence $\eta(b_{m+1})$ that $\ll$-dominates it, so that $[\eta(b_m)] \leq [\eta(b_{m+1})]$.  There is also, for given $m$, and element of the sequence $\eta(b_{m+1})$ that $\ll$-dominates $b_m$.  (These two assertions follow from property (4) in the definition of $\BCu$.)  It follows that for some sequence $m_j$, we have $[(\eta(b_j)_{m_j}] \geq [(b_m)]$.  Identifying $\eta(b_j)$ with $(c_n^j)$ from the first part of the proof and observing (see again the proof of existence of suprema in inductive limits of Cuntz semigroups in \cite{cei}) that the $n_j$ chosen above can be increased without disturbing the fact that $(c_{n_j}^j) \approx (b_n)$, we see that by increasing the $m_j$ if necessary, we also have that $\sup_j [\eta(b_j)] = [\eta(b_j)_{m_j}] \geq [(b_m)]$.  It follows that $[\eta(b_{j_0})] \geq [(a_n)]$ for some $j_0$, whence $a_n \ll b_{j_0}$ for all $n$, as required.
\end{proof}
 
 \begin{lemma}\label{waylessembed}
Let $(S,D) \in \Cu_0$. Then $a \ll b$ in $D$ if and only if $[\eta(a)] \ll [\eta(b)] \in W(D) \cong S$.
\end{lemma}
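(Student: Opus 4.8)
The plan is to show that, under the isomorphism $W(D)\cong S$, each generator $\eta(x)$ represents the element $x$ itself; once this is in place the assertion collapses into Lemma~\ref{wayless} together with the fact that $\ll$ on $D$ is by definition nothing but the restriction of $\ll$ on $S$. First I would record the basic computation $\sup_n\eta(x)_n=x$ in $S$ for every $x\in D$. Since $D$ is sup-dense, $x$ is the supremum of some $\ll$-increasing sequence $(c_k)$ in $D$ with each $c_k\ll x$; and the terms of $\eta(x)$ all lie in $\{c\in D: c\ll x\}$, hence satisfy $\eta(x)_n\ll x$ and in particular $\eta(x)_n\le x$. As $\eta(x)$ is cofinal in $\{c\in D:c\ll x\}$, each $c_k$ is dominated by some term of $\eta(x)$, so $x=\sup_k c_k\le\sup_n\eta(x)_n\le x$, giving $\sup_n\eta(x)_n=x$. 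Recalling that the isomorphism $W(D)\cong W(S)\cong S$ sends a class $[(y_n)]$ to $\sup_n y_n$, this yields $[\eta(x)]=x$ in $S$.

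With this identification there are two equivalent ways to finish. The most direct is to note that the relation $[\eta(a)]\ll[\eta(b)]$ in $W(D)\cong S$ now literally reads $a\ll b$ in $S$, and since the relation $\ll$ on $D$ is inherited from $S$, this is exactly $a\ll b$ in $D$. Alternatively, and more in keeping with the preceding lemma, I would invoke Lemma~\ref{wayless} to replace $[\eta(a)]\ll[\eta(b)]$ by $\eta(a)\ll^\nearrow\eta(b)$ in $D^\nearrow$, and then verify the equivalence $\eta(a)\ll^\nearrow\eta(b)\iff a\ll b$ by hand.

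For the latter verification, write $\eta(a)=(a_n)$ and $\eta(b)=(b_m)$. If $a\ll b$, then because $(b_m)$ is $\ll$-increasing with supremum $b$ there is $m_0$ with $a\le b_{m_0}$, whence $a_n\le a\le b_{m_0}$ for all $n$, so $\eta(a)\ll^\nearrow\eta(b)$ by the very definition of $\ll^\nearrow$. Conversely, if $\eta(a)\ll^\nearrow\eta(b)$, fix $m_0$ with $a_n\le b_{m_0}$ for all $n$; then $a=\sup_n a_n\le b_{m_0}$, and since $b_{m_0}\ll b$ (as $b_{m_0}\in\{c\in D:c\ll b\}$), the standard property $z\le x\ll y\Rightarrow z\ll y$ of the way-below relation gives $a\ll b$.

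The only point requiring genuine care is the supremum computation $\sup_n\eta(x)_n=x$: this is where the sup-density of $D$ and the cofinality built into $\eta$ are both used, and property~(4) in the definition of $\BCu$ guarantees that the relevant sequences exist and are honestly $\ll$-increasing. Everything past that is a routine unwinding of the definitions of $\ll^\nearrow$ and of the way-below relation, so I do not expect any serious obstacle beyond this bookkeeping.
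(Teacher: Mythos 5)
Your proposal is correct and follows essentially the same route as the paper: reduce via Lemma~\ref{wayless} to the equivalence $a \ll b \iff \eta(a) \ll^\nearrow \eta(b)$ in $D^\nearrow$, using that $\eta(x)$ is a $\ll$-increasing sequence with supremum $x$, and then argue each direction exactly as the paper does (the converse via $a \le \eta(b)_{m_0} \ll \eta(b)_{m_0+1} \le b$). Your explicit verification that $\sup_n \eta(x)_n = x$ is a detail the paper asserts more briefly, but it is the same underlying argument.
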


\begin{proof}
By Lemma \ref{wayless}, it will suffice to prove that $a \ll b$ iff $\eta(a) \ll^\nearrow \eta(b)$ in $D^\nearrow$.  Suppose first that $a \ll b$, so that $(\eta(a)_i)$ and $(\eta(b)_m)$ are $\ll$-increasing sequences in $D$ with suprema $a$ and $b$, respectively (this uses several facts: that $D$ is embedded in $S \in \mathbf{Cu}$; that objects in $\mathbf{Cu}$ admit suprema for increasing sequences; and that $S$ may be identified with $W(S)$ as in Subsection \ref{cuntzbasics}).  Since $a \ll b$, there is $m_0$ such that $\eta(b)_m \geq a \geq \eta(a)_i$ for all $i$ and for all $m \geq m_0$, as required.

Conversely, suppose that $\eta(a) \ll^\nearrow \eta(b)$, so that there is $m_0$ such that $\eta(a)_i \leq \eta(b)_m$ for all $m \geq m_0$.  Now
\[
\sup_i(\eta(a)_i) = a \leq \eta(b)_m \ll \eta(b)_{m+1} \leq b,
\]
so that $a \ll b$.
\end{proof}

\begin{lemma}\label{less}
Let $(S,D) \in \Cu_0$. Then $(a_n) \leq^\nearrow (b_n)$ in $D^\nearrow$ if and only if $[(a_n)] \leq [(b_n)]$ in $W(D) \cong S$.
\end{lemma}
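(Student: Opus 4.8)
The plan is to treat the two implications separately, using the isomorphism $W(D)\cong S$, under which the class $[(a_n)]$ is identified with $\sup_n a_n\in S$ (the composite $W(D)\to W(S)\to S$ sends a class to the supremum, computed in $S$, of a representing sequence). With this identification, the assertion ``$[(a_n)]\leq[(b_n)]$ in $W(D)\cong S$'' means exactly $\sup_n a_n\leq\sup_m b_m$ in $S$, and the content of the lemma is that this inequality in $S$ is detected termwise by $\leq^\nearrow$.

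The forward direction is routine and I would dispatch it first. Assuming $(a_n)\leq^\nearrow(b_m)$, for each $n$ there is $m$ with $a_n\leq b_m$, and since $b_m\leq\sup_k b_k$ in $S$ we get $a_n\leq\sup_k b_k$ for every $n$; passing to the supremum gives $\sup_n a_n\leq\sup_k b_k$, i.e.\ $[(a_n)]\leq[(b_m)]$. This uses nothing beyond monotonicity of suprema with respect to $\leq$.

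For the reverse direction I would argue as follows. Suppose $\sup_n a_n\leq\sup_m b_m$ in $S$ and fix $n$. Because $(a_k)$ is $\ll$-increasing, we have in $S$ the chain
\[
a_n\ll a_{n+1}\leq\sup_k a_k\leq\sup_m b_m .
\]
The crucial step is to upgrade $a_n\ll a_{n+1}\leq\sup_m b_m$ to $a_n\ll\sup_m b_m$; this is the standard absorption property of the compact-containment relation in $\mathbf{Cu}$, namely that $x\ll y$ and $y\leq z$ imply $x\ll z$ (see \cite{cei}, \cite{apt}). Granting this, and noting that $(b_m)$ is $\leq$-increasing (property (3) in the definition of $\BCu$, since $\ll$ refines $\leq$) with supremum $\sup_m b_m$, the very definition of compact containment in Subsection \ref{cuntzbasics} yields $m_0$ with $a_n\leq b_{m_0}$. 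Since $n$ was arbitrary, this gives $(a_n)\leq^\nearrow(b_m)$.

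The main obstacle is precisely the absorption property $x\ll y\leq z\Rightarrow x\ll z$: the definition of $\ll$ in Subsection \ref{cuntzbasics} quantifies only over increasing sequences whose supremum is \emph{exactly} $y$, so this monotonicity is not formally immediate from the definition and must either be quoted from the $\mathbf{Cu}$ literature or derived from axiom $\mathbf{P4}$ for objects of $\mathbf{Cu}$ (by choosing a $\ll$-increasing sequence converging to $z$ and interpolating). Everything else is a direct manipulation of suprema, entirely parallel to the proofs of Lemma \ref{wayless} and Lemma \ref{waylessembed}.
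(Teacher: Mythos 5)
Your argument is correct, but it proves the lemma by a genuinely different route than the paper. The paper never passes to $S$ at all: it reads the relation $[(a_n)]\leq[(b_n)]$ as the quotient of $\leq^\nearrow$ on $D^\nearrow/\approx$, so the forward direction is immediate (take $(a_n)$ itself as the witnessing representative), and the converse only uses that $\approx$-equivalent sequences are mutually $\ll$-cofinal: from $a_n\ll c_m$ and $c_m\leq^\nearrow(b_k)$ one gets $a_n\leq b_k$ using nothing beyond $\ll\,\Rightarrow\,\leq$ and transitivity. You instead transport everything to $S$ via the identification $[(a_n)]\mapsto\sup_n a_n$ and prove that termwise domination detects the inequality of suprema; the price is the absorption property $x\ll y\leq z\Rightarrow x\ll z$, which, as you correctly note, does not follow formally from the ``exact supremum'' definition of $\ll$ in Subsection \ref{cuntzbasics} and must be imported from \cite{cei} or derived from axiom $\mathbf{P4}$. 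That step is legitimate (and indeed the paper itself silently uses the same absorption property in the proof of Lemma \ref{lessembed}), so there is no gap; but the paper's proof is shorter and avoids it here entirely. What your version buys is a direct semantic reading of the order on $W(D)\cong S$ as comparison of suprema in $S$, whereas the paper's version stays at the combinatorial level of $D^\nearrow$ and makes the lemma nearly definitional; both readings agree because the isomorphism $W(D)\cong S$ quoted from \cite{abp:comp} is an order isomorphism in $\mathbf{Cu}$.
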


\begin{proof}
Suppose that $(a_n) \leq^\nearrow (b_n)$.  It follows that for each $n \in \mathbb{N}$ there is $m(n)$ such that 
\[
a_n \leq b_{m(n)} \ll b_{m(n)+1}.
\]  
The statement $[(a_n)] \leq [(b_n)]$ amounts to the existence of $(c_n) \in D^\nearrow$ such that $(a_n) \approx (c_n)$ and $(c_n) \leq^\nearrow (b_n)$.  Here we can take $(c_n) = (a_n)$, completing the forward implication.

Suppose, conversely, that $[(a_n)] \leq [(b_n)]$, so that there is some $(c_n) \in D^\nearrow$ such that $(a_n) \cong (c_n)$ and $(c_n) \leq^\nearrow (b_n)$.  Since $(a_n)$ and $(c_n)$ are cofinal in each other with respect to $\ll$, it is immediate that $(a_n) \leq^\nearrow (b_n)$.
\end{proof}

\begin{lemma}\label{lessembed}
Let $(S,D) \in \Cu_0$. Then $a \leq b$ in $D$ if and only if $[\eta(a)] \leq [\eta(b)]$ in $W(D) \cong S$.
\end{lemma}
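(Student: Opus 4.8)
The plan is to mirror the proof of Lemma~\ref{waylessembed}. First I would invoke Lemma~\ref{less}, which identifies $[\eta(a)] \leq [\eta(b)]$ in $W(D)\cong S$ with the assertion $\eta(a) \leq^\nearrow \eta(b)$ in $D^\nearrow$. The whole statement thus reduces to the combinatorial claim that, in $D$, one has $a \leq b$ if and only if $\eta(a) \leq^\nearrow \eta(b)$. Throughout I would use that the relations $\leq$ and $\ll$ on $D$ are those inherited from $S$, together with the standard fact about compact containment in objects of $\mathbf{Cu}$ that $x \ll y \leq z$ implies $x \ll z$.

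For the forward implication I would assume $a \leq b$ and fix an arbitrary index $n$. Since $\eta(a)$ is cofinal in $\{c \in D : c \ll a\}$ we have $\eta(a)_n \ll a$, and combining this with $a \leq b$ gives $\eta(a)_n \ll b$. As $\eta(b)$ is in turn cofinal in $\{c \in D : c \ll b\}$, there is an index $m$ with $\eta(a)_n \leq \eta(b)_m$. Since $n$ was arbitrary, this is precisely $\eta(a) \leq^\nearrow \eta(b)$.

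For the reverse implication I would assume $\eta(a) \leq^\nearrow \eta(b)$ and pass to suprema in $S$. For each $n$ choose $m$ with $\eta(a)_n \leq \eta(b)_m$; since $\eta(b)_m \ll b$ forces $\eta(b)_m \leq b$, we obtain $\eta(a)_n \leq b$ for every $n$. Recalling that $\eta(a)$ is a $\ll$-increasing sequence cofinal in $\{c \in D : c \ll a\}$, so that $\sup_n \eta(a)_n = a$ under the identification $W(D) \cong S$ of Subsection~\ref{cuntzbasics} (property $\mathbf{P4}$), taking the supremum over $n$ yields $a = \sup_n \eta(a)_n \leq b$, as required.

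I do not anticipate any serious obstacle; the argument is essentially bookkeeping with the definitions of $\eta$ and $\leq^\nearrow$. The only point needing care is the repeated passage between $D$ and its ambient completion $S$ — in particular verifying that $\sup_n \eta(a)_n = a$ and that compact containment absorbs an inequality on the right — but both are immediate from property $\mathbf{P4}$ and the basic order theory of $\mathbf{Cu}$.
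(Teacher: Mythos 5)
Your proof is correct and follows essentially the same route as the paper: reduce via Lemma \ref{less} to the claim $a \leq b \iff \eta(a) \leq^\nearrow \eta(b)$, using $\sup_n \eta(a)_n = a$ and $\sup_m \eta(b)_m = b$ in both directions. The only cosmetic difference is that in the forward direction the paper applies the definition of $\ll$ directly to the increasing sequence $\eta(b)_m$ with supremum $b \geq a$, whereas you route through cofinality of $\eta(b)$ in $\{c \in D : c \ll b\}$ together with $x \ll y \leq z \Rightarrow x \ll z$ --- the two arguments are interchangeable.
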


\begin{proof}
By Lemma \ref{less}, it is enough to prove that $a \leq b$ iff $\eta(a) \leq^\nearrow \eta(b)$ in $D^\nearrow$.  Suppose first that $a \leq b$.  The sequence $(\eta(a)_n)$, being cofinal with respect to $\ll$ in $\{c \in D \ | \ c \ll a \}$, has a supremum in $S$, namely, $a$ itself.  A similar statement holds for $b$.  For any $n \in \mathbb{N}$ we have $\eta(a)_n \ll a$, and $\sup \eta(b)_m = b \geq a$.  It follows that $\eta(b)_m \gg \eta(a)_n$ for all $m$ sufficiently large, whence $[\eta(a)] \leq [\eta(b)]$, as desired.

Suppose, conversely, that $\eta(a) \leq^\nearrow \eta(b)$ in $D^\nearrow$.  Since $\sup \eta(a)_n = a$, $\sup \eta(b)_m = b$, and for each $n$ there is $m$ such that $\eta(a)_n \ll \eta(b)_m$, it is immediate that $a \leq b$ in $S$.
\end{proof}

Using methods similar to those of Lemmas \ref{wayless}--\ref{lessembed} one can also prove the following result.

\begin{lemma}\label{embedD}
Let $(S,D) \in \Cu_0$.  Then the following are equivalent:
\begin{itemize}
\item[(i)] $a \oplus b = c$ in $D$;
\item[(ii)] $\eta(a) \oplus^\nearrow \eta(b) = \eta(c)$ in $D^\nearrow$;
\item[(iii)] $[\eta(a)] + [\eta(b)] = [\eta(c)]$ in $W(D)$.
\end{itemize}
\end{lemma}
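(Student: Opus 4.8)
The plan is to split the three-way equivalence into the trivial equivalence (ii) $\Leftrightarrow$ (iii), which is essentially definitional, and the substantive equivalence (i) $\Leftrightarrow$ (ii), which I would prove by transporting statements between $D$ and $S$ via suprema, exactly as in Lemmas~\ref{wayless}--\ref{lessembed}. Throughout I read equality in $D^\nearrow$ in (ii) as equality modulo $\approx$, i.e.\ as the assertion $\eta(a)\oplus^\nearrow\eta(b)\approx\eta(c)$, since the chosen cofinal sequences are only determined up to $\approx$.

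For (ii) $\Leftrightarrow$ (iii): recall that $\oplus^\nearrow$ drops to the operation $+$ on $W(D)=D^\nearrow/{\approx}$, so by definition $[\eta(a)]+[\eta(b)]=[\eta(a)\oplus^\nearrow\eta(b)]$. Two elements of $D^\nearrow$ represent the same class in $W(D)$ precisely when they are $\approx$-equivalent, so $[\eta(a)]+[\eta(b)]=[\eta(c)]$ holds if and only if $\eta(a)\oplus^\nearrow\eta(b)\approx\eta(c)$, which is (ii).

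For (i) $\Rightarrow$ (ii): assume $a\oplus b=c$ in $D$ and consider $\eta(a)\oplus^\nearrow\eta(b)=(\eta(a)_n\oplus\eta(b)_n)$. First I would check this is a genuine element of $D^\nearrow$: its terms lie in $D$ since $D$ is a subsemigroup, and since $\eta(a)_n\ll\eta(a)_{n+1}$ and $\eta(b)_n\ll\eta(b)_{n+1}$, the compatibility of $\ll$ with addition ($\mathbf{P5}$) gives $\eta(a)_n\oplus\eta(b)_n\ll\eta(a)_{n+1}\oplus\eta(b)_{n+1}$, so the sequence is $\ll$-increasing. Computing its supremum in $S$, the compatibility of suprema with addition ($\mathbf{P5}$) gives $\sup_n(\eta(a)_n\oplus\eta(b)_n)=\sup_n\eta(a)_n+\sup_n\eta(b)_n=a+b=c$, using that $\sup_n\eta(d)_n=d$ for $d\in D$ as in the proof of Lemma~\ref{lessembed}. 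Since $\eta(c)$ is likewise a $\ll$-increasing sequence in $D$ with supremum $c$, it remains to invoke the sublemma that any two $\ll$-increasing sequences $(x_n),(y_n)$ in $D$ with the same supremum $c$ in $S$ satisfy $(x_n)\approx(y_n)$: given $m$, from $x_m\ll x_{m+1}\leq c=\sup_n y_n$ one gets $x_m\ll c$, whence compact containment against $(y_n)$ yields $x_m\leq y_n$ for some $n$ and therefore $x_m\ll y_{n+1}$; symmetry supplies the other half of $\approx$. This gives $\eta(a)\oplus^\nearrow\eta(b)\approx\eta(c)$, i.e.\ (ii).

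For (ii) $\Rightarrow$ (i) I would pass back to suprema: an $\approx$-equivalence between two sequences makes them mutually $\ll$-cofinal, hence (using $\mathbf{P3}$: $\ll$ implies $\leq$) forces equal suprema, so $\eta(a)\oplus^\nearrow\eta(b)\approx\eta(c)$ yields $a+b=\sup_n(\eta(a)_n\oplus\eta(b)_n)=\sup_n\eta(c)_n=c$ in $S$, and $a\oplus b=c$ in $D$ because $D$ is a subsemigroup. The only real content lies in the sublemma that coincidence of suprema implies $\approx$; the remaining steps are bookkeeping with $\mathbf{P5}$, the property $x\leq y\ll z\Rightarrow x\ll z$ already used in Lemmas~\ref{wayless} and~\ref{waylessembed}, and the identification $\sup_n\eta(d)_n=d$, so the argument mirrors the order and way-below cases treated above.
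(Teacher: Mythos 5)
Your proof is correct and follows exactly the route the paper intends: the paper in fact omits the proof of Lemma~\ref{embedD}, remarking only that it follows by ``methods similar to those of Lemmas~\ref{wayless}--\ref{lessembed}'', and your argument---reading (ii) as equality modulo $\approx$, passing from (i) to (ii) via suprema using $\mathbf{P5}$ together with $\sup_n \eta(d)_n = d$, and noting that (ii)~$\Leftrightarrow$~(iii) is definitional---is precisely that method. Your explicit sublemma, that two $\ll$-increasing sequences in $D$ with the same supremum in $S$ are $\approx$-equivalent, is the right way to make the paper's tacit identification precise, and it uses only the same standard way-below manipulations the paper itself employs in Lemma~\ref{lessembed}.
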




\begin{lemma}\label{cuntzcode}
Let $D_1,D_2 \in \BCu$ be sup-dense subsemigroups of elements of $\mathbf{Cu}$. If $\alpha$ codes a homomorphism $\Phi:W(D_1) \to W(D_2)$ then for $a,b \in D_1$ we have:
\begin{enumerate}
\item $a\lCu b$ implies $(\forall m)( \exists n) \alpha(a)_m\lCu \alpha(b)_n$;
\item \label{C.Code.2} 
 $a\ll b$ implies $(\exists n)(\forall m) \alpha(a)_m\lCu \alpha(b)_n$;
\item $\alpha(a) \oplus \alpha(b)$ (defined pointwise) satisfies $\alpha(a) \oplus \alpha(b) \approx \alpha(a \oplus b)$.
\end{enumerate}
Conversely, if $\alpha$ has properties (1)--(3), then 
\[
\Psi: \left( \eta(D_1)/\approx \right) \cong D_1 \to W(D_2)
\]
given by $\Psi[(\eta(a)] = [\alpha(a)]$ is a homomorphism preserving $\leq$ and $\ll$, and if we let $\Psi[(a_n)] = \sup [\alpha(a_n)]$ for each $(a_n) \in D_1^\nearrow$ then this extends $\Psi$ to a homomorphism in $\mathbf{Cu}$ from $W(D_1) \to W(D_2)$.  

\end{lemma}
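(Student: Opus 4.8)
The plan is to handle the two implications separately, in each case using the embedding lemmas (Lemmas~\ref{wayless}--\ref{embedD}) as a dictionary translating every statement about the sequences $\alpha(a)$ and $\eta(a)$ into a statement about their classes in $W(D_1)$ and $W(D_2)$, where the categorical hypothesis on $\Phi$ (resp.\ the properties (1)--(3)) can be invoked.

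For the forward direction, suppose $\alpha$ codes $\Phi$, so that $[\alpha(a)]=\Phi[\eta(a)]$ for all $a\in D_1$. If $a\lCu b$, then Lemma~\ref{lessembed} gives $[\eta(a)]\leq[\eta(b)]$, monotonicity of $\Phi$ yields $[\alpha(a)]\leq[\alpha(b)]$, and Lemma~\ref{less} unwinds this to $\alpha(a)\leq^\nearrow\alpha(b)$, which is exactly (1). Replacing $\leq$ by $\ll$ throughout, and using Lemmas~\ref{waylessembed} and \ref{wayless} together with the fact that $\Phi$ preserves $\ll$, gives (2). For (3), Lemma~\ref{embedD} shows $[\eta(a)]+[\eta(b)]=[\eta(a\oplus b)]$; applying the additive map $\Phi$ and recalling that $+$ on $W(D_2)$ is induced by $\oplus^\nearrow$ gives $[\alpha(a)\oplus\alpha(b)]=[\alpha(a\oplus b)]$, i.e.\ $\alpha(a)\oplus\alpha(b)\approx\alpha(a\oplus b)$. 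This direction is routine once the dictionary of Lemmas~\ref{wayless}--\ref{embedD} is in place.

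For the converse, I would first check that $\Psi[\eta(a)]:=[\alpha(a)]$ is well defined on $\eta(D_1)/{\approx}\cong D_1$: if $[\eta(a)]=[\eta(a')]$, then Lemma~\ref{lessembed} gives $a\lCu a'$ and $a'\lCu a$, whence property (1) applied both ways yields $\alpha(a)\leq^\nearrow\alpha(a')$ and $\alpha(a')\leq^\nearrow\alpha(a)$, so $\alpha(a)\approx\alpha(a')$ by the equivalence recorded just after the definition of $\ll^\nearrow$. The same three properties, read through Lemmas~\ref{less}, \ref{wayless} and \ref{embedD}, show that $\Psi$ preserves $\leq$, $\ll$ and $\oplus$ (and, as is easily checked, the zero); thus $\Psi$ is a morphism of $\mathbf{PreCu}$-objects from $D_1$ into the object $W(D_2)$ of $\mathbf{Cu}$.

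Finally I would pass to the extension. Since $D_1$ lies in $\mathbf{PreCu}$ and $W(D_1)$ is its completion in $\mathbf{Cu}$ (as noted after the definition of $\BCu$), the universal property of the completion (\cite[Lemma 4.2]{abp:comp}, \cite[Theorem 4.8]{abp:comp}) extends $\Psi$ uniquely to a $\mathbf{Cu}$-morphism $W(D_1)\to W(D_2)$ whose value on an increasing sequence is the supremum of the values, i.e.\ $[(a_n)]\mapsto\sup_n[\alpha(a_n)]$; these suprema exist because $([\alpha(a_n)])_n$ is $\ll$-increasing by (2) and by property $\mathbf{P3}$ of $W(D_2)$. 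The delicate point, which I expect to be the main obstacle, is the consistency of this formula with the definition $\Psi[\eta(a)]=[\alpha(a)]$, i.e.\ that $\sup_n[\alpha(\eta(a)_n)]=[\alpha(a)]$. The inequality $\leq$ is immediate from (2), but the reverse inequality is exactly the continuity of $\Psi$ along the cofinal sequence $\eta(a)$, and it does \emph{not} follow from the purely monotone and additive conditions (1)--(3) on their own; it is precisely what the completion theorem supplies, guaranteeing that the extended $\mathbf{Cu}$-morphism restricts to $\Psi$ on $D_1$. Alternatively, one may verify by hand that the sup formula is independent of the representative and respects $\leq^\nearrow$, $\ll^\nearrow$, $\oplus^\nearrow$ and suprema of increasing sequences (the last via the diagonal argument used for inductive limits in \cite{cei}), invoking the completion theorem only for the consistency clause.
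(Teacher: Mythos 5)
Your forward direction, and your check that $a\mapsto[\alpha(a)]$ is well defined and structure preserving on $D_1\cong\eta(D_1)/{\approx}$, coincide with the paper's argument. Where you genuinely diverge is the extension to $W(D_1)$: the paper does not go through the completion machinery of \cite{abp:comp} at this point. It takes $\Psi[(a_n)]=\sup_n[\alpha(a_n)]$ as the definition on all of $W(D_1)$ and verifies by hand that it is independent of the representative, by interleaving $\ll$-cofinal subsequences of two $\approx$-equivalent sequences and applying property (2) together with Lemmas \ref{wayless} and \ref{waylessembed}; preservation of $\leq$ is checked the same way, and additivity, $\ll$- and sup-preservation are declared ``similar''. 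That is essentially your fallback ``verify by hand'' option.

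Your primary route, via the universal property of the completion, is circular at exactly the ``delicate point'' you single out. To invoke \cite[Theorem 4.8]{abp:comp} you must already know that $a\mapsto[\alpha(a)]$ is a morphism in $\mathbf{PreCu}$, and morphisms there are required to preserve those suprema of increasing sequences that exist in the domain; applied to $\eta(a)$, whose supremum is $a$, this is precisely the identity $\sup_n[\alpha(\eta(a)_n)]=[\alpha(a)]$ whose validity is in question. So the completion theorem presupposes the consistency clause rather than supplying it. You are right that this clause does not follow from (1)--(3): those conditions constrain only how $\alpha$ interacts with $\lCu$, $\ll$ and $\oplus$, and an $\alpha$ that agrees with $\eta$ on all $a\ll b$ but sends a non-compact $b$ to (the cofinal sequence of) a compact element strictly above $b$ can still satisfy (1)--(3) while violating the identity. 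The paper's proof is silent on this, so you have correctly located a soft spot in the lemma as stated, but your appeal to \cite{abp:comp} does not close it. The clean repair is to take the sup formula as the definition of $\Psi$ on all of $W(D_1)$ (weakening ``extends'' accordingly), noting that in the forward direction --- the case actually exploited in Proposition \ref{P.Cu.1}, where $\alpha$ codes a genuine $\mathbf{Cu}$-morphism --- the consistency identity holds by the definition of coding.
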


\begin{proof}
Let $\Phi:W(D_1) \to W(D_2)$ be a homomorphism, and assume that $\alpha$ codes $\Phi$.  We will prove (1);  the proofs of (2) and (3) are similar.  Let $a,b \in D_1$ satisfy $a \precsim b$.  By Lemma \ref{lessembed}, this is equivalent to $[\eta(a)] \leq [\eta(b)]$ in $W(D_1)$.  Since $\Phi$ is a homomorphism in $\mathbf{Cu}$, we have $\Phi[\eta(a)] \leq \Phi[\eta(b)]$, and since $\alpha$ codes $\Phi$, we conclude that $[\alpha(a)] \leq [\alpha(b)]$.   This last inequality is equivalent to the conclusion of (1) by Lemma \ref{less}.

Suppose now that $\alpha$ has properties (1)--(3).  The isomorphism  $ \left( \eta(D_1)/\approx \right) \cong D_1$ follows from Lemmas \ref{wayless}--\ref{embedD}.  Let us check that $\Psi$ preserves $\leq$ on $\eta(D_1)/\approx$ (verifying the other properties required of $\Psi$ on this domain is similar.)  First, we check that $\Psi$ is well-defined on $W(D_1)$.  Suppose $[(c_m)] = [(b_n)]$ in $W(D_1)$.  Passing to subsequences we have
\[
b_{n_1} \ll c_{m_1} \ll b_{n_2} \ll c_{m_2} \ll \cdots
\]
so that by property (2) and Lemmas \ref{wayless} and \ref{waylessembed} we have
\[
[\alpha(b_{n_1})] \ll [\alpha(c_{m_1})] \ll [\alpha(b_{n_2})] \ll [\alpha(c_{m_2})] \ll \cdots.
\]
It follows that 
\[
\sup_i [\alpha(b_{n_i})] = \sup_j [\alpha(c_{m_j})],
\]
Since 
\[
\sup_i [\alpha(b_{n_i})] = \sup_n [\alpha(b_n)],
\]
we have 
\[
\sup_n [\alpha(b_n)] = \sup_m [\alpha(c_m)]
\]
as required.  

To see that $\leq$ is preserved by $\Phi$ on $W(D_1)$, consider $[(b_n)] \leq [(c_m)]$.  Passing to subsequences we can assume that $b_k \ll c_k$ for every $k$.  Then by property (2) and Lemmas \ref{wayless} and \ref{waylessembed} we have
\begin{eqnarray*}
&& [\alpha(b_k)] \ll [\alpha(c_k)] \\
& \Rightarrow &\sup_k [\alpha(b_k)] \leq \sup_k [\alpha(c_k)] \\
& \Rightarrow & \Psi[(b_n)] \leq \Psi[(c_m)].
\end{eqnarray*}
 
\end{proof} 
 
\subsection{An analytic relation and isomorphism in $\Cu$} We shall now define an analytic equivalence relation on $\BCu$ which, for sup-dense subsemigroups of elements of $\mathbf{Cu}$, amounts to isomorphism.  Consider the standard Borel space $\BCu^2\times \cY^2$. 
In this space  consider the  Borel set $\cX$ consisting of all 
quadruples $(D_1,D_2, \alpha_1, \alpha_2)$
such that
\begin{enumerate}
\item  $\alpha_1$ and $\alpha_2$ satisfy the (Borel) conditions (1)--(3) of Lemma \ref{cuntzcode}
\item  $(\forall a\in D_1)(\forall b\in D_2)$ we have
\[
\alpha_1(a)\ll^\nearrow \eta(b) \ \Leftrightarrow \ \eta(a) \ll^\nearrow \alpha_2(b)
\]
and
\[
\eta(b) \ll^\nearrow \alpha_1(a) \ \Leftrightarrow \ \alpha_2(b) \ll^\nearrow \eta(a).
\]
\end{enumerate}
It is straightforward to verify that the conditions above define a Borel subset of $\BCu^2 \times \cY^2$, whence $\cX$ is a standard Borel space.
Now define a relation $E$ on $\BCu$ by
 \[
 D_1ED_2 \Leftrightarrow (\exists\alpha_1,\alpha_2) (D_1,D_2,\alpha_1,\alpha_2) \in \cX,
 \]
whence $E$, as the co-ordinate projection of $\cX$ onto $\BCu^2$, is analytic.

\begin{prop} \label{P.Cu.1} 
Let $D_1, D_2 \in \BCu$ be sup-dense subsemigroups of elements of $\mathbf{Cu}$.  It follows that $D_1ED_2$ iff $W(D_1) \cong W(D_2)$ in the category $\mathbf{Cu}$.
\end{prop}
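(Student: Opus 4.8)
The plan is to read the two codes $\alpha_1,\alpha_2$ witnessing $D_1 E D_2$ as a pair of mutually inverse $\mathbf{Cu}$-morphisms, and to interpret condition (2) in the definition of $\cX$ as a Galois-type adjunction for the relation $\ll$ that forces those morphisms to be inverse to one another.

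\emph{Forward direction.} Suppose $(D_1,D_2,\alpha_1,\alpha_2)\in\cX$, and fix $(S_i,D_i)\in\Cu_0$ with $W(D_i)\cong S_i$. By condition (1) of $\cX$ and the converse half of Lemma~\ref{cuntzcode}, $\alpha_1$ and $\alpha_2$ code $\mathbf{Cu}$-morphisms $\Phi_1\colon W(D_1)\to W(D_2)$ and $\Phi_2\colon W(D_2)\to W(D_1)$, so that $[\alpha_1(a)]=\Phi_1[\eta(a)]$ for $a\in D_1$ and $[\alpha_2(b)]=\Phi_2[\eta(b)]$ for $b\in D_2$. Translating $\ll^\nearrow$ into $\ll$ via Lemma~\ref{wayless} in each of $W(D_1)$ and $W(D_2)$, condition (2) of $\cX$ becomes, for all $a\in D_1$ and $b\in D_2$,
\[
\Phi_1[\eta(a)]\ll[\eta(b)]\iff[\eta(a)]\ll\Phi_2[\eta(b)],
\]
\[
[\eta(b)]\ll\Phi_1[\eta(a)]\iff\Phi_2[\eta(b)]\ll[\eta(a)].
\]

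I would then prove $\Phi_2\circ\Phi_1=\mathrm{id}_{W(D_1)}$. Since the classes $[\eta(a)]$ ($a\in D_1$) are sup-dense in $W(D_1)$ and both $\Phi_2\Phi_1$ and the identity are $\mathbf{Cu}$-morphisms preserving suprema of increasing sequences, it suffices to check $\Phi_2\Phi_1[\eta(a)]=[\eta(a)]$ for each $a$. Using sup-density of $D_2$ in $W(D_2)$ together with axiom M3 for $\Phi_2$,
\[
\Phi_2\Phi_1[\eta(a)]=\sup\{\Phi_2[\eta(b)]:[\eta(b)]\ll\Phi_1[\eta(a)]\}.
\]
By the second equivalence every term satisfies $\Phi_2[\eta(b)]\ll[\eta(a)]$, so the supremum is $\leq[\eta(a)]$. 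For the reverse inequality, fix any $a'$ with $[\eta(a')]\ll[\eta(a)]$; since $\Phi_1$ preserves $\ll$ we get $\Phi_1[\eta(a')]\ll\Phi_1[\eta(a)]$, and sup-density plus interpolation of $\ll$ in $W(D_2)$ (between $u\ll v$ there is a sup-dense $w$ with $u\ll w\ll v$, by P4) yields $b\in D_2$ with $\Phi_1[\eta(a')]\ll[\eta(b)]\ll\Phi_1[\eta(a)]$. The first relation and the first equivalence give $[\eta(a')]\ll\Phi_2[\eta(b)]$, while the second relation and the second equivalence give $\Phi_2[\eta(b)]\ll[\eta(a)]$; hence $[\eta(a')]\leq\Phi_2[\eta(b)]\leq\Phi_2\Phi_1[\eta(a)]$. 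Taking the supremum over all such $a'$ and using $[\eta(a)]=\sup\{[\eta(a')]:[\eta(a')]\ll[\eta(a)]\}$ gives $[\eta(a)]\leq\Phi_2\Phi_1[\eta(a)]$, hence equality. The symmetric argument, running the same two equivalences with the roles of the indices reversed, gives $\Phi_1\circ\Phi_2=\mathrm{id}_{W(D_2)}$. Thus $\Phi_1$ is an isomorphism and $W(D_1)\cong W(D_2)$.

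\emph{Converse.} Suppose $\Phi\colon W(D_1)\to W(D_2)$ is an isomorphism in $\mathbf{Cu}$ with inverse $G=\Phi^{-1}$. For each $a\in D_1$ the element $\Phi[\eta(a)]\in W(D_2)$ is the class of some sequence in $D_2^\nearrow$; choosing one such representative for every $a$ defines $\alpha_1\in\cY$ with $[\alpha_1(a)]=\Phi[\eta(a)]$, so $\alpha_1$ codes $\Phi$, and likewise I choose $\alpha_2$ coding $G$. By Lemma~\ref{cuntzcode} both satisfy (1)--(3), giving condition (1) of $\cX$. For condition (2), translating via Lemma~\ref{wayless} as above reduces the required equivalences to
\[
\Phi[\eta(a)]\ll[\eta(b)]\iff[\eta(a)]\ll G[\eta(b)],\qquad
[\eta(b)]\ll\Phi[\eta(a)]\iff G[\eta(b)]\ll[\eta(a)],
\]
both of which are immediate from the fact that an isomorphism in $\mathbf{Cu}$ and its inverse each preserve $\ll$, hence jointly preserve and reflect it. Therefore $(D_1,D_2,\alpha_1,\alpha_2)\in\cX$ and $D_1 E D_2$.

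\emph{Main obstacle.} The delicate step is the forward direction: extracting the genuine identity $\Phi_2\Phi_1=\mathrm{id}$ from a condition phrased purely in terms of $\ll$. The crux is recognizing condition (2) as an adjunction between $\Phi_1$ and $\Phi_2$ for $\ll$, and then using sup-density together with the interpolation property of $\ll$ to recover order and suprema from $\ll$ alone. Care is needed to ensure that every supremum invoked is taken over an increasing sequence, so that the morphism axiom M3 applies.
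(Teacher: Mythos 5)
Your proposal is correct and follows essentially the same route as the paper: both directions rest on Lemma \ref{cuntzcode} to pass between codes and $\mathbf{Cu}$-morphisms, and on reading condition (2) in the definition of $\cX$, via Lemma \ref{wayless}, as the statement that $\Phi_1$ and $\Phi_2$ transport $\ll$ back and forth. The only difference is cosmetic, in the verification that $\Phi_2\circ\Phi_1=\mathrm{id}$: the paper interleaves a $\ll$-increasing sequence $[\eta(b_i)]\ll[\alpha_1(f_i)]\ll[\eta(b_{i+1})]$ and pushes it through condition (2), while you reduce to the generators $[\eta(a)]$ and argue by two inequalities using $\ll$-interpolation; both are sound.
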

\noindent

\begin{proof}
Assume $W(D_1)\cong W(D_2)$ and let $\Phi\colon W(D_1)\to W(D_2)$ be an  isomorphism. 
Pick $\alpha_1$ that codes $\Phi$ and $\alpha_2$ that codes $\Phi^{-1}$, so that $\alpha_1$ and $\alpha_2$ have the properties (1)--(3) of Lemma \ref{cuntzcode}. 
For (2) in the definition of $\cX$ we will only prove the first equivalence, as the second one is similar.  By Lemma \ref{wayless},  the first equivalence in (2) is equivalent to 
\[ 
(\forall a\in D_1)(\forall b\in D_2) \ \ [\alpha_1(a)] \ll [\eta(b)] \Leftrightarrow [\eta(a)] \ll [\alpha_2(b)].
\]
Suppose $[\alpha_1(a)] \ll [\eta(b)]$, so that 
\[
\Phi^{-1}[\alpha_1(a)] \ll \Phi^{-1}[\eta(b)]
\]
(morphisms in $\mathbf{Cu}$ preserve $\ll$).  Since $\alpha_2$ codes $\Phi^{-1}$, the right hand side above can be identified with $[\alpha_2(b)]$.  Similarly,
$\Phi^{-1}[\alpha_1(a)] = \Phi^{-1}\Phi[\eta(a)] = [\eta(a)]$,  so that $[\eta(a)] \ll [\alpha_2(b)]$.  The other direction is similar, establishing (2) from the definition of $\cX$, whence $D_1ED_2$.

Now assume $(D_1,D_2,\alpha_1,\alpha_2)\in \cX$ for some $\alpha_1$ and $\alpha_2$. 
Using Lemma \ref{cuntzcode} we obtain homomorphisms $\Phi_1:W(D_1)\to W(D_2)$ and $\Phi_2:W(D_2) \to W(D_1)$.   
Let us verify that $\Phi_2 \circ \Phi_1 = \mathrm{id}_{W(D_1)}$ (the proof for that $\Phi_1 \circ \Phi_2 = \mathrm{id}_{W(D_2)}$ is similar).  Fix $[(f_n)] \in W(D_1)$.  Since $a \mapsto [\eta(a)]$ is a complete order embedding of $D_1$ into $W(D_1)$ relative to $\precsim$ and $\ll$ by Lemmas \ref{wayless}--\ref{embedD}, we have $[(f_n)] = \sup_n [\eta(f_n)]$.  Since $\Phi_1$ preserves $\ll$, we have a corresponding $\ll$-increasing sequence $\Phi_1[\eta(f_n)] = [\alpha(f_n)]$, $i \in \mathbb{N}$ (see Lemma \ref{cuntzcode}).  Choose a $\ll$-increasing sequence $[\eta(b_i)]$ in $W(D_2)$ with supremum $\Phi_1[(f_n)]$, and note that this is also the supremum of the sequence $[\alpha_1(f_n)]$.  Since $W(D_2) \in \mathbf{Cu}$, we may, passing to a subsequence if necessary, assume that
\[
[\eta(b_i)] \ll [\alpha_1(f_i)] \ \ \mathrm{and} \ \  [\alpha_1(f_i)] \ll [\eta(b_{i+1})].
\]
Using (2) in the definition of $\cX$ and the relations above we obtain
\[
[\alpha_2(b_i)] \ll [\eta(f_i)] \ \ \mathrm{and} \ \ [\eta(f_i)] \ll [\alpha_2(b_{i+1})],
\]
so that the sequences $[\alpha_2(b_i)]$ and $[\eta(f_i)]$ have the same supremum, namely, $[(f_n)]$.  Now we compute:
\begin{eqnarray*}
(\Phi_2 \circ \Phi_1) [(f_n)] & = & (\Phi_2 \circ \Phi_1) \sup_i[\eta(f_i)] \\
& = & \Phi_2  \left( \sup_i \Phi_1[\eta(f_i)] \right) \\
& = & \Phi_2 \left( \sup_i [\alpha_1(f_i)] \right) \\
& = & \Phi_2 \left( \sup_i [\eta(b_i)] \right) \\
& = & \sup_i \Phi_2[\eta(b_i)] \\
& = & \sup_i [\alpha_2(b_i)] \\
& = & \sup_i [\eta(f_i)] \\
& = & [(f_n)].
\end{eqnarray*}

\end{proof}

Recall the following well-known lemma. 

\begin{lemma} For any strictly decreasing sequence $(\epsilon_n)$ of positive tolerances converging to zero, the sequence
$\langle (a-\epsilon_n)_+ \rangle$ is $\ll$-increasing in $Cu(A)$. \qed
\end{lemma}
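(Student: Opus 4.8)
The plan is to verify directly that for each $n$ one has $\langle (a-\epsilon_n)_+\rangle \ll \langle (a-\epsilon_{n+1})_+\rangle$ in $Cu(A)$, which is precisely the assertion that the sequence is $\ll$-increasing. (Note $\epsilon_n > \epsilon_{n+1}$, so the element cut down at the larger tolerance should sit compactly below the one cut down at the smaller tolerance.) The whole argument rests on two ingredients: an elementary identity of continuous functional calculus, and the standard characterization of compact containment in the Cuntz semigroup in terms of cut-downs.

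First I would record the scalar identity $((t-\alpha)_+ - \beta)_+ = (t-(\alpha+\beta))_+$, valid for all $t\ge 0$ and all $\alpha,\beta\ge 0$ (check the two cases $t\le\alpha$ and $t>\alpha$). Applying continuous functional calculus to the positive element $a$ turns this into the operator identity $((a-\alpha)_+ - \beta)_+ = (a-(\alpha+\beta))_+$. Now fix $n$ and set $\delta = \epsilon_n - \epsilon_{n+1} > 0$. Taking $\alpha = \epsilon_{n+1}$ and $\beta = \delta$ gives
\[
\big((a-\epsilon_{n+1})_+ - \delta\big)_+ = (a-\epsilon_n)_+,
\]
so in particular $(a-\epsilon_n)_+ \precsim \big((a-\epsilon_{n+1})_+ - \delta\big)_+$.

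The substantive input is the well-known characterization (see \cite{cei}) that for positive elements $c,d \in (A\otimes\mathcal{K})_+$ one has $\langle c\rangle \ll \langle d\rangle$ in $Cu(A)$ whenever there is some $\delta > 0$ with $c \precsim (d-\delta)_+$. Applying this with $c=(a-\epsilon_n)_+$, $d=(a-\epsilon_{n+1})_+$, and the $\delta$ chosen above immediately yields $\langle(a-\epsilon_n)_+\rangle \ll \langle (a-\epsilon_{n+1})_+\rangle$, which is what we wanted.

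The functional-calculus step is entirely routine; the only real content is the compact-containment characterization, and that is where I expect the main difficulty to lie if one insists on a self-contained argument. Unwinding the definition of $\ll$, one must show that whenever an increasing sequence $(\langle d_k\rangle)$ has supremum $\langle d\rangle$, the cut-down $(d-\delta)_+$ is Cuntz-below some $d_{k_0}$; this is essentially the statement that cut-downs are compactly contained and depends on the inductive-limit description of suprema in $Cu(A)$ developed in \cite{cei}. Since the lemma is being recalled as well known, I would cite \cite{cei} for this characterization rather than reprove it.
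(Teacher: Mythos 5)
Your proof is correct. The paper itself offers no argument here---the lemma is recalled as well known and closed with an immediate \qed---so there is nothing to compare against; but your two ingredients are exactly the standard ones, and the second (compact containment $\langle c\rangle \ll \langle d\rangle$ follows from $c \precsim (d-\delta)_+$ for some $\delta>0$) is precisely the characterization the paper invokes later, in the claim computing $R[\ll,\gamma]$, where it cites \cite{apt} for the equivalence $[d_m]\ll[d_n] \iff (\exists j)\, d_m \precsim (d_n - 1/j)_+$. The functional-calculus identity $((a-\alpha)_+-\beta)_+=(a-(\alpha+\beta))_+$ is checked correctly, and applying the characterization with $\delta=\epsilon_n-\epsilon_{n+1}$ gives $\langle (a-\epsilon_n)_+\rangle \ll \langle (a-\epsilon_{n+1})_+\rangle$ as required. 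Citing \cite{cei} or \cite{apt} for the compact-containment criterion, as you do, is entirely in keeping with how the paper treats this point.
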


In some cases, for example when $a$ is a projection, the sequence in the Lemma is eventually constant, i.e., $\langle a \rangle$ is compact.  
 This occurs, for instance, when $a \precsim (a-\epsilon)_+$ for some $\epsilon>0$.

\begin{prop} \label{P.Cu.2} There is a Borel map $\Psi\colon \Gamma\to \BCu$ such that $W(\Psi(\gamma)) \cong \Cu(C^*(\gamma))$.

\end{prop}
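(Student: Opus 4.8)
The plan is to construct, in a Borel way, a countable sup-dense subsemigroup $D=\Psi(\gamma)$ of $\Cu(C^*(\gamma))$, present it as a triple in $\BCu$, and then invoke the isomorphism $W(D)\cong S$ that holds for any sup-dense subsemigroup $D$ of an object $S\in\mathbf{Cu}$ (established via the \textbf{PreCu}/completion results of \cite{abp:comp} recalled just after the definition of $\BCu$). Since $\Cu$ is computed on $(A\otimes\mathcal K)_+$, I first replace $\gamma$ by a parameter $\gamma'$ for $C^*(\gamma)\otimes\mathcal K$ using the Borel stabilization map of \cite[Lemma 3.10]{FaToTo2}; writing $B=C^*(\gamma)\otimes\mathcal K$, this algebra is already stable, so all orthogonal direct sums may be realized inside it. The sequence $b_k:=\p_k(\gamma')^*\p_k(\gamma')$ is then a Borel sequence dense in $B_+$, because the $\p_k(\gamma')$ are dense in $B$ and $x\mapsto x^*x$ is continuous with range dense in $B_+$.

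The underlying generators of $D$ will be the Cuntz classes of the positive elements $(b_k-q)_+$ for $k\in\bbN$ and $q\in\bbQ^{\geq 0}$, closed under orthogonal addition. Using the Borel identification $M_2(B)\cong B$ from \cite[Lemma 3.7]{FaToTo2} and \cite[Lemma 3.10]{FaToTo2}, the assignment $(a,b)\mapsto\diag(a,b)$ followed by this identification gives a Borel binary operation on positive elements that realizes $[a]+[b]$ as a single positive element of $B$; I close the generator set under it and index the resulting words by $\bbN$, so that $\oplus\colon\bbN\times\bbN\to\bbN$ is a total (indeed combinatorial) Borel operation. The two relations are read off directly from the definition $a\precsim b\iff v_nbv_n^*\to a$: exactly as in Lemma~\ref{L.K.1}, one has
\[
a\precsim b\iff (\forall \e\in\bbQ^+)(\exists k)\ \|\p_k(\gamma')\,b\,\p_k(\gamma')^*-a\|<\e,
\]
a $\Pi^0_2$ and hence Borel condition, which supplies the order $\lCu$. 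For compact containment I use the standard characterization $[a]\ll[b]\iff(\exists\delta\in\bbQ^+)\ a\precsim(b-\delta)_+$ (see \cite{apt}), again Borel once $\precsim$ is. One then checks conditions (1)--(4) of $\BCu$: (1)--(3) are immediate from the corresponding properties of $\precsim$ and $\ll$ on $\Cu(B)$, while the directedness and no-maximal-element clause of (4) follows because, for a generator $[(b_k-q)_+]$, the classes $[(b_k-q')_+]$ with $q'>q$ rational form a $\ll$-increasing family below it with no maximal element, eventually constant precisely when the class is compact.

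It remains to verify sup-density, i.e. that every $s=[a]\in\Cu(B)$ is the supremum of a $\ll$-increasing sequence from $D$. Here I use that $[a]=\sup_n[(a-1/n)_+]$ with $[(a-1/n)_+]\ll[a]$, together with R\o rdam's perturbation estimate: if $\|a-b_k\|<\delta$ then $(a-\delta)_+\precsim b_k$ and $(b_k-\delta)_+\precsim a$ (\cite{apt}). Choosing $b_k$ close to $a$ and combining these inequalities lets me sandwich a $D$-element $[(b_k-\delta)_+]$ between $[(a-1/n)_+]$ and $[a]$, and by selecting such elements along a fast sequence I obtain a $\ll$-increasing sequence in $D$ with supremum $[a]$. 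Thus $D$ is a genuine sup-dense subsemigroup of $\Cu(B)$, and by the completion theory recalled above $W(\Psi(\gamma))\cong\Cu(B)\cong\Cu(C^*(\gamma))$.

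I expect the main obstacle to be the bookkeeping needed to make $\oplus$ a \emph{total} Borel operation on a single index set $\bbN$ while simultaneously keeping $D$ (i) closed under orthogonal addition, (ii) sup-dense, and (iii) a genuine subset of $\Cu(B)$ — so that $\lCu$ is a partial order and the hypotheses of Proposition~\ref{P.Cu.1} and of the $W(D)\cong S$ theorem genuinely apply; ensuring (iii) may require a Borel selection of representatives in the spirit of the $\Kzero$ construction. Realizing the abstract direct sum uniformly inside the stable algebra, and pushing the perturbation estimates through to sup-density, are where the real work lies; by contrast the Borel-ness of $\precsim$ and $\ll$ is essentially immediate from their definitions once the stabilization has been carried out.
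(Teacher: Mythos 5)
Your proposal is correct and follows essentially the same route as the paper: stabilize via the Borel map $\Tensor(\cdot,\gamma_0)$, take a dense sequence of positive elements cut down by rational tolerances $(b_k-q)_+$, observe that $\precsim$, $\ll$ (via $[a]\ll[b]\iff a\precsim (b-\delta)_+$), and addition are Borel relations on the resulting index set, and conclude $W(\Psi(\gamma))\cong\Cu(C^*(\gamma))$ from sup-density and the completion results of \cite{abp:comp}. The only real divergence is in realizing orthogonal sums: you close the generators under a formal $\diag(a,b)$ operation transported through a fixed identification $M_2(B)\cong B$, whereas the paper chooses the generator $\gamma_0$ of $\cK$ closed under finite permutations of a basis and conjugates by the partial isometries $v_l$ so that $[d_m]+[d_n]$ is already realized as some $[d_k]$ (making $\oplus$ a relation on the existing index set rather than requiring new words) -- both handle the bookkeeping issue you correctly flag at the end.
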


\begin{proof}
Fix $\gamma_0\in \Gamma$ such that $C^*(\gamma_0)$ is the algebra of compact operators 
and a bijection $\pi$ between $\bbN^2$ and $\bbN$. 
Moreover, choose $\gamma_0$ so that all operators in $\gamma_0$ 
have finite rank and $\gamma_0$ is closed under finite permutations of a fixed basis $(e_n)$ 
of $H$. 
We also fix a sequence of compact partial isometries $v_m$, such that $v_m$ swaps 
the first $m$ vectors of $(e_n)$ with the next $m$ vectors of this basis.  This sequence will 
be used in the proof of Claim~\ref{Cl.Cu.w} below.

Let $\Psi$ denote the Borel map from $\Gamma$ to $\Gamma$ obtained as the composition of
three Borel maps:  $\Tensor(\cdot,\gamma_0)$, where $\Tensor$ is the Borel map 
from \cite[Lemma 3.10]{FaToTo2};  the map $\gamma \mapsto  (\p_n(\gamma))$ (see \cite[Proposition 2.7]{FaToTo2});  and finally the map that sends $(a_n)$ to $(b_n)$ where 
\begin{equation}
b_n=((a_{\pi_0(n)}a_{\pi_0(n)}^*)-1/\pi_1(n))_+
\label{P.Cu.1.Eq.1}
\end{equation}
(here $n\mapsto (\pi_0(n),\pi_1(n))$ is the fixed bijection between $\bbN$ and $\bbN^2$). 

Fix $\gamma\in \Gamma$. 
Then $\gamma_1=\Tensor(\gamma,\gamma_0)$ satisfies $C^*(\gamma)\otimes \cK\cong C^*(\Tensor(\gamma,\gamma_0))$. 
Moreover, for  any two positive entries $a$ and $b$ of 
 $\gamma_1$ there are orthogonal positive $a'$ and $b'$ in $\gamma_1$ such that $a \sim a'$ and $b \sim b'$.  (Here $\sim$ denotes Cuntz equivalence.)  If $\gamma_2 = (\p_n(\gamma_1))$, then the elements of $\gamma_2$ are norm-dense in $C^*(\gamma_2) \cong C^*(\gamma) \otimes \cK$, and $\gamma_2$ contains $\gamma_1$ as a subsequence.  Finally, if  $\gamma_3$ is the sequence as in \eqref{P.Cu.1.Eq.1}
then $\gamma_3$ is a norm-dense set subset of the positive elements of $C^*(\gamma)\otimes \cK$. 
Let us write $d_m(\gamma):=(\gamma_3)_m$ and $x_n(\gamma):=(\gamma_2)_n$.


\begin{claim} \label{Cl.Cu.1} The map $\Gamma\to\cP(\bbN)^2: \gamma \stackrel{\Psi_\lCu}\longmapsto R[\lCu,\gamma]$, 
defined by 
\[
(m,n)\in R[\lCu,\gamma]\text{ if and only if } 
d_m(\gamma)\lCu d_n(\gamma)
\]
(where $\lCu$ is computed in $C^*(\gamma_1)=C^*(\gamma)\otimes \cK$) 
is Borel. 
\end{claim} 

\begin{proof} Recall that a map is Borel if and only if its graph is Borel. 
We have  (writing $d_m$ for $d_m(\gamma)$ and $x_m$ for $x_m(\gamma)$) 
$(m,n)\in R[\lCu,\gamma]$ if and only if 
$(\forall i) (\exists j) \|x_j  d_n x_j^* - d_m\| <1/i$.
Therefore the graph of $\Psi_\lCu$ is equal to 
$\bigcap_i \bigcup_j A_{ij}$ where 
\[
A_{ij}=\{(\gamma,(m,n)): \|x_j(\gamma) d_n(\gamma) x_j(\gamma)^*-d_m(\gamma)\|<1/i\}.
\]
All of these sets are Borel 
since the maps $\gamma\mapsto d_m(\gamma)$ and $\gamma\mapsto d_m(\gamma)$ are, by the above, Borel. 
The map that sends the pair of sequences $(x_j)$ and $(d_j)$ to $R[\lCu,\gamma]$ is therefore Borel. 
The computation of these two sequences from $\gamma$ is Borel by construction, and this completes the proof. 
\end{proof} 

By Claim~\ref{Cl.Cu.1}, for each $\gamma$ we have a preordering $R[\lCu,\gamma]$ on $\bbN$. 
Then  
\[
R[\sCu,\gamma]=\{(m,n): (m,n)\in R[\lCu,\gamma] \ \mathrm{and} \ (n,m)\in R[\lCu,\gamma]\}
\]
is also a Borel function, and it defines a quotient partial ordering on $\bbN$ 
for every $\gamma$.   In what follows we use $[a]$ to denote the Cuntz equivalence class of a positive element of $C^*(\gamma)$.

\begin{claim} \label{Cl.Cu.w}
The map $\Gamma\to\cP(\bbN)^3: \gamma \stackrel{\Psi_+}\longmapsto R[+,\gamma]$, 
defined by 
\[
(m,n,k)\in R[+,\gamma]\text{ if and only if } 
[d_m]+ [d_n]=[d_k] 
\]
(where $+$ is computed in $\Cu(C^*(\gamma))$) 
is Borel. Moreover, it naturally defines a semigroup 
operation on $\bbN/R[\sCu,\gamma]$. 
\end{claim}

\begin{proof} Fix $\gamma$. Let us first prove that the sequence $d_m:=d_m(\gamma)$ 
is such that for all $m$ and $n$ there is $k$ satisfying $[d_m]+[d_n]=[d_k]$. 
Our choice of generating sequence $\gamma_0$ for $\cK$ ensures that each $d_m$ is contained in $C^*(\gamma) \otimes \mathrm{M}_n$ for some $n$, where $\mathrm{M}_1 \subseteq \mathrm{M}_2 \subseteq \mathrm{M_3} \subseteq \cdots$ is a fixed sequence of matrix algebras with union dense in $\cK$.  The $\mathrm{M}_n$ are the bounded operators on $\mathrm{span}(e_1,\ldots,e_n)$.
By construction $\gamma_0$ is closed under finite permutations of the basis $(e_n)$, 
so that for a large enough $l$ the isometry $v_l$ (see above) we have that 
$b_m:= (1 \otimes v_l )d_m (1 \otimes v_l)*$ is both Cuntz equivalent to $d_m$ and orthogonal to $d_n$.  Here the "1" in the first
tensor factor is the unit of $C^*(\gamma)$ if $C^*(\gamma)$ is unital, and the unit of the unitization of $C^*(\gamma)$ otherwise.  Note that $ w_l := b_m(1 \otimes v_l)$ belongs to $C^*(\gamma) \otimes \cK$, and that $w_l d_m w_l^* = b_m^3$ is Cuntz equivalent to and orthogonal to $d_m$.
It follows that 
\[
[d_n + w_l d_m w_l^*] = [d_n] + [w_l d_m w_l^*] = [d_n]+[d_m].
\]
By the defininition of the function $\Tensor$ in \cite[Lemma 3.10]{FaToTo2}, 
for all $l$ we have $v_l d_m v_l^*=d_{r(l)}$ and $d_n+d=d_{k(l)}$ for some $r(l)$ and $k(l)$. 

Now we check that the graph of $\Psi_+$ is Borel. 
This is equivalent to verifying that the graph of the function that maps each triple 
$(\gamma,m,n)$ to the set $X_{\gamma,m,n}$ 
of all $k$ such that $(m,n,k)\in \Psi_+(\gamma)$ is Borel. 
Moreover, a function $\Lambda$ from a Borel space into $\cP(\bbN)$ is Borel if and only if all of the 
sets $\{(\gamma,k): k\in \Lambda(\gamma)\}$ are Borel. 

It will therefore suffice to check that the set $\{(\gamma,(m,n,k)): (m,n,k)\in \Psi_+(\gamma)\}$ 
is Borel. But by the above, $(m,n,k)\in \Psi_+(\gamma)$ is equivalent to 
(writing $d_m$ for $d_m(\gamma)$)
\[
(\exists m) (\forall l\geq m) d_n+ w_l d_m w_l^* \sCu d_k
\]
where $\sCu$ is the Cuntz equivalence relation:  $a \sim b$ iff $a \precsim b$ and $b \precsim a$. 
This is a Borel set, and therefore the map $\Psi_+$ is Borel. 

Clearly, $\Psi_+(\gamma)$ is compatible with $\lCu$ and it defines the addition on $\bbN/R[\sCu,\gamma]$ that coincides with the addition on the Cuntz semigroup.  
\end{proof}

\begin{claim} The map $\Gamma\to\cP(\bbN)^2:\gamma\mapsto R[\ll,\gamma]$, 
defined by 
\[
(m,n)\in R[\ll,\gamma]\text{ if and only if } 
[d_m]\ll [d_n] 
\]
(where $\ll$ is computed in $\Cu(C^*(\gamma))$) 
is Borel. \end{claim} 

\begin{proof} We have $[d_m]\ll [d_n]$ if and only if there exists $j\in \bbN$ such that 
 $d_m\lCu (d_n-1/j)_+$ (\cite{apt}). 
 Recalling that $d_{\pi(n,j)}=(d_n-1/j)_+$ for all $n$ and $j$, 
 we see that is equivalent to 
 \[
 (\exists j)(m,\pi(n,j))\in R[\lCu,\gamma]
 \]
 and therefore the map is Borel. 
\end{proof} 

Collecting these three claims we see that the map which sends $\gamma$ to an element of $\BCu$ representing 
$\Cu(C^*(\gamma))$---call it $\Phi$---is Borel. 
\end{proof} 


\subsection{The radius of comparison}

The radius of comparison is a notion of dimension for noncommutative spaces which is useful for distinguishing simple nuclear \CSTAR algebras and is connected deeply to Elliott's classification program (see \cite{tcmp} and \cite{et}).

Consider the standard space $\BCu_u = \BCu \times \mathbb{N}$, where the second co-ordinate of $(D,e) \in \BCu_u$ represents a distinguished element of $D$.  Let $\Cu_u$ denote the category of Cuntz semigroups with a distinguished compact element.  It is straightforward, by following the proof of Proposition \ref{P.Cu.2}, to verify that there is a Borel map $\Psi\colon \Gamma_u \to \BCu_u$ such  $\Psi(\gamma) = (D,[1_{C^*(\gamma)}])$, where $D$ is (identified with) a countable sup-dense subsemigroup of $\Cu(C^*(\gamma))$.



If $(S,e) \in \Cu_u$, then the {\it radius of comparison of $S$} (relative to $e$), denoted by $r(S,e)$, is defined by
\[
r(S,e) = \inf \{ m/n \ | \ m, n \in \mathbb{N} \ \wedge \ x \leq y \mathrm{\ in \ } S \mathrm{\ whenever \ } (n+1)x + me \leq ny \}
\]
if this infimum exists, and by $r(S,e)= \infty$ otherwise.
Of course, this definition makes sense for any ordered semigroup with a distinguished element $e$, e.g., an element $(D,e)$ of $\BCu \times \mathbb{N}$, so we can equally well define $r(D,e)$ in the same way.

\begin{prop}\label{rccountable}  Let $(S,e) \in \Cu_u$, and let $D \subseteq S$ be a countable sup-dense subsemigroup of $S$ containing $e$.  It follows that, with respect to the common element $e$, 
$r(S,e)=r(D,e)$.
\end{prop}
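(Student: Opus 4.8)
The plan is to compare the two sets of admissible ratios directly. Write $R_S$ for the set of $m/n$ (with $m,n\in\mathbb{N}$) such that $(n+1)x+me\le ny$ implies $x\le y$ for all $x,y\in S$, and write $R_D$ for the analogous set with $x,y$ ranging over $D$; then $r(S,e)=\inf R_S$ and $r(D,e)=\inf R_D$, so it suffices to prove $R_S=R_D$.

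First I would dispose of the inclusion $R_S\subseteq R_D$, which is immediate: the comparison implication defining membership in $R_S$ is a universal statement over $S$, and since $D\subseteq S$ carries the inherited order, the same implication holds a fortiori for all $x,y\in D$. This already yields $r(D,e)\le r(S,e)$.

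The substance is the reverse inclusion $R_D\subseteq R_S$. Here I would fix $m/n\in R_D$ together with $x,y\in S$ satisfying $(n+1)x+me\le ny$, and aim to conclude $x\le y$. The strategy is to approximate from below using sup-density: write $x=\sup_k x_k$ and $y=\sup_j y_j$ with $(x_k)$ and $(y_j)$ being $\ll$-increasing sequences in $D$, and then reduce the goal to showing $x_k\le y$ for every $k$, since $x=\sup_k x_k$. For a fixed $k$ the idea is to replace $x$ by the approximant $x_k$ and $y$ by a single term $y_{j_0}$, thereby landing inside the subsemigroup $D$, where the hypothesis $m/n\in R_D$ can be invoked to give $x_k\le y_{j_0}\le y$.

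The hard part, and really the only nonroutine step, will be producing that single index $j_0$ with $(n+1)x_k+me\le ny_{j_0}$. My plan for this is to observe that $x_k\ll x_{k+1}\le x$, so that additivity and compatibility of $\ll$ with $+$ (property $\mathbf{P5}$), together with compactness of the distinguished element ($e\ll e$, hence $me\ll me$), give $(n+1)x_k+me\ll(n+1)x_{k+1}+me\le ny$; since $\ll$ composes with $\le$ on the right (a standard property of the relation $\ll$ in $\mathbf{Cu}$) this yields $(n+1)x_k+me\ll ny$. Rewriting $ny=\sup_j(ny_j)$ (again by $\mathbf{P5}$, as suprema commute with addition) and applying the definition of $\ll$ to the increasing sequence $(ny_j)$ then extracts the desired $j_0$. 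I expect the compactness of $e$ to be the crux: it is precisely what allows the additive constant $me$ to be carried on the strict ($\ll$) side of the estimate, so that this term survives the passage to the approximant; without it the argument would break. Once $j_0$ is in hand, $x_k,y_{j_0},e$ all lie in $D$, the hypothesis applies to give $x_k\le y_{j_0}\le y$, and letting $k$ vary finishes the argument via $x=\sup_k x_k\le y$. This establishes $R_S=R_D$ and hence $r(S,e)=r(D,e)$.
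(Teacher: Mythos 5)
Your proof is correct and follows essentially the same route as the paper's: approximate $x$ and $y$ from below by $\ll$-increasing sequences in $D$, use compactness of $e$ (so $me\ll me$) together with the compatibility of $\ll$ with addition to get $(n+1)x_k+me\ll ny$, extract a single index $j_0$ from the increasing sequence $(ny_j)$, apply the comparison hypothesis inside $D$, and take suprema. The only (cosmetic, and arguably cleaner) difference is that you phrase the result as an equality of the two sets of admissible ratios $R_S=R_D$, whereas the paper runs an $\epsilon$-argument showing $r(S)\le r(D)+\epsilon$ for every $\epsilon>0$.
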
 

\begin{proof}  We suppress the $e$ and write only $r(D)$ and $r(S)$.
It is clear that $r(D) \leq r(S)$.  Given $\epsilon>0$, we will prove $r(S) \leq r(D) + \epsilon$.  Choose $m,n \in \mathbb{N}$ to satisfy
\[
r(D) < m/n < r(D) + \epsilon.
\]
Let $x,y \in S$ satisfy 
\[
(n+1)x + me \leq ny.
\]
There are rapidly increasing sequences $(x_k)$ and $(y_k)$ in $D$ having suprema $x$ and $y$, respectively.  Since $e$ is compact, so is $me$, i.e, $me \ll me$.  Since $(n+1)x_k \ll (n+1)x$ for any $k$, we can use the fact that addition respects $\ll$ to conclude that 
\[
(n+1)x_k + me \ll (n+1)x + me \leq ny.
\]
It follows that 
\[
(n+1)x_k + me \ll ny.
\]
Since the operation of addition respects the operation of taking suprema, we have $\sup ny_l = ny$, whence for some (and hence all larger) $l_k \in \mathbb{N}$ we have
\[
(n+1)x_k +me \leq ny_{l_k}.
\]
Now since $m/n > r(D)$ we conclude that $x_k \leq y_{l_k}$.  Taking suprema yields $x \leq y$, proving that $r(S) \leq m/n < r(D)+ \epsilon$, as desired.
\end{proof}

\begin{prop}\label{rcborel}
The map $\mathrm{rc}:\Gamma_u \to \mathbb{R}^+ \cup \{\infty\}$ given by $\mathrm{rc}(\gamma) = r(\Cu(C^*(\gamma)), [1_{C^*(\gamma)}])$ is Borel.
\end{prop}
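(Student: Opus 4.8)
The plan is to factor $\mathrm{rc}$ through the abstract radius-of-comparison function on $\BCu_u$ and to show that the latter is Borel. First I would invoke the Borel map $\Psi\colon \Gamma_u\to \BCu_u$ described at the beginning of this subsection, for which $\Psi(\gamma)=(D_\gamma,[1_{C^*(\gamma)}])$ with $D_\gamma$ a (code for a) countable sup-dense subsemigroup of $\Cu(C^*(\gamma))$ containing the compact element $[1_{C^*(\gamma)}]$. By Proposition~\ref{rccountable} we then have $r(\Cu(C^*(\gamma)),[1_{C^*(\gamma)}])=r(D_\gamma,[1_{C^*(\gamma)}])$, i.e.\ $\mathrm{rc}=r\circ\Psi$, where $r\colon\BCu_u\to[0,\infty]$, $(D,e)\mapsto r(D,e)$, is computed purely from the coded ordered-semigroup structure. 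Since $\Psi$ is Borel, it suffices to prove that $r$ is Borel.

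Next I would isolate, for each pair $(m,n)$ with $n\geq 1$, the set
\[
Q_{m,n}=\{(D,e)\in\BCu_u : (\forall x,y\in\bbN)\ (n+1)x\oplus me\lCu ny \Rightarrow x\lCu y\},
\]
where $jz$ denotes the $j$-fold $\oplus$-sum of $z$ (and the summand $me$ is omitted when $m=0$), so that by definition $r(D,e)=\inf\{m/n : (D,e)\in Q_{m,n}\}$, with $\inf\emptyset=\infty$. I would then check that each $Q_{m,n}$ is Borel. Because $\oplus$ is a coordinate of $D$ and $\bbN$ is discrete, the maps $(D,e)\mapsto (n+1)x\oplus me$ and $(D,e)\mapsto ny$ are continuous into $\bbN$ for fixed $m,n,x,y$; hence for fixed $x,y$ the hypothesis $(n+1)x\oplus me\lCu ny$ defines a Borel (indeed open) set, namely the union over $a,b\in\bbN$ of the clopen sets on which $(n+1)x\oplus me=a$, $ny=b$ and $(a,b)\in\lCu$, the last condition being decided by a single coordinate of the $\mathcal P(\bbN\times\bbN)$-factor. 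The conclusion $x\lCu y$ is clopen for fixed $x,y$, so the implication holds on a Borel set for each $(x,y)$, and $Q_{m,n}$ is the countable intersection of these sets over $x,y\in\bbN$.

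Finally, for every real $t$,
\[
\{(D,e) : r(D,e)<t\}=\bigcup_{\substack{m,n\in\bbN,\ n\geq 1\\ m/n<t}} Q_{m,n}
\]
is a countable union of Borel sets and hence Borel; since the half-lines $[0,t)$ generate the Borel structure of $[0,\infty]$, this yields that $r$, and therefore $\mathrm{rc}=r\circ\Psi$, is Borel. I expect the main obstacle to be precisely the verification that $Q_{m,n}$ is Borel: one must confirm that forming the iterated sums $(n+1)x\oplus me$ and $ny$ from the code and then evaluating $\lCu$ on the resulting pair are Borel operations, after which the universal quantifier over $(x,y)$ costs only a countable intersection and the remaining steps are routine.
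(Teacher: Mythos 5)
Your proposal is correct and follows essentially the same route as the paper's proof: factor $\mathrm{rc}=r\circ\Psi$ using Proposition~\ref{rccountable}, verify that each set $A_{m,n}$ (your $Q_{m,n}$) is Borel by quantifying over the countable underlying set of the code, and conclude that $r$ is Borel as an infimum over countably many Borel conditions. The only cosmetic difference is that the paper realizes the infimum as $\inf\circ\zeta$ for a Borel map $\zeta$ into $(\mathbb{R}^+\cup\{\infty\})^{\mathbb{N}^2}$, whereas you argue via sublevel sets; these are interchangeable.
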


\begin{proof}
The map $\Psi: \Gamma_u \to \BCu_u$ is Borel and satisfies $r(\Psi(\gamma)) = r(\Cu(C^*(\gamma)), [1_{C^*(\gamma)}])$ by Proposition \ref{rccountable}.  It will therefore suffice to prove that $r: \BCu_u \to \mathbb{R} \cup \{\infty\}$ is Borel.  For $m,n \in \mathbb{N}$ the set
\[
A_{m,n} = \{ (D,e) \in \BCu_u \ | \ (\forall x,y \in D) (n+1)x + me \leq ny \Rightarrow x \leq y \}
\]
is Borel.
Define a map $\zeta_{m,n}: \BCu_u \to \mathbb{R}^+ \cup \{\infty\}$ by declaring that $\zeta_{m,n}(D,e) = m/n$ if $(D,e) \in A_{m,n}$ and $\zeta_{m,n}(D,e) = \infty$ otherwise.  Viewing the $\zeta_{m,n}$ as co-ordinates we get a Borel map $\zeta:\BCu_u \to (\mathbb{R}^+ \cup \{\infty\})^{\mathbb{N}^2}$ in the obvious way, and $r(D,e) = \inf \zeta(D,e)$.  This shows that $r$ is Borel, as desired.
\end{proof}


\section{Other Invariants}\label{S.other}

\subsection{Theory of a \CSTAR algebra }

Unlike other invariants of \CSTAR algebras treated in this paper, the
\emph{theory} $\Th(A)$ of a \CSTAR algebra  $A$ comes from logic. By the 
metric version of the Keisler--Shelah theorem (\cite[Theorem 5.7]{BYBHU}), it has the property 
that two \CSTAR algebras have isomorphic ultrapowers if and only if they have the same theory.
It should be emphasized that the ultrapowers may have to be associated with ultrafilters on 
uncountable sets, even if the algebras in question are separable. 
A comprehensive treatment of model theory of bounded metric structures is given 
in \cite{BYBHU}, and  
model-theoretic study of \CSTAR algebras and tracial von Neumann algebras was initiated in 
\cite{FaHaSh:Model1} and \cite{FaHaSh:Model2}. We refer the reader to these papers
for more details, background, and applications.

We now give a special case of the definition of a formula
(\cite[\S 2.4]{FaHaSh:Model2}) in the case of \CSTAR algebras (cf. \cite[\S 2.3.1]{FaHaSh:Model2} 
and \cite[\S 3.1]{FaHaSh:Model2}). 
A \emph{term} is a *-polynomial. 
A \emph{basic formula} is an expression of the form $\|P(x_0,\dots, x_{n-1})\|$ where
$P(x_0,  \dots, x_{n-1})$ is a term in variables $x_0,\dots, x_{n-1}$. 
\emph{Formulas} are elements of the smallest set $\bbF$  that contains all basic formulas and
has the following closure properties (we suppress free variables in order to increase readability).  
\begin{enumerate}
\item [(F1)]  If $f:\bbR^n \rightarrow \bbR$ is continuous and
$\varphi_1,\ldots,\varphi_n$ are formulas, then
$f(\varphi_1,\ldots,\varphi_n)$ is a formula.
\item [(F2)] If $\varphi$ is a formula,  $K\geq \bbN$ is a natural number, and 
$x$ is  a variable  
then both $\sup_{\|x\|\leq K} \varphi$ and $\inf_{\|x\|\leq K} \varphi$ are formulas.
\end{enumerate}
Equivalently, formulas are obtained from basic formulas by finite application of the above two operations. 

The quantifiers in  this logic  are  $\sup_{\|x\|\leq 1}$ and $\inf_{\|x\|\leq 1}$. 
A variable appearing in a formula $\psi$ outside of the  scope of its  quantifiers
 (i.e., any $\varphi$ as in (F2)) 
is \emph{free}. 

As customary in logic we list all free variables occurring in 
a fornula $\varphi$ and write $\varphi(x_0,\dots, x_{n-1})$. 
A formula $\varphi(x_0,\dots, x_{n-1})$ 
is interpreted  in  a \CSTAR algebra  $A$ in a natural way. Given $a_0,\dots, a_{n-1}$ in $A$, 
one defines the value $\varphi(a_0,\dots, a_{n-1})^A$ recursively on the complexity 
of formula $\varphi$. As $a_0,\dots, a_{n-1}$ vary, one obtains  a  function
from $A^n$ into $\bbR$ whose restriction to any bounded ball of $A$ is uniformly continuous
(\cite[Lemma~2.2]{FaHaSh:Model2}). 
A \emph{sentence} is a formula with no free variables. 
If $\varphi$ is a sentence then the interpretation 
$\varphi^A$ is a constant function and we identify it with the corresponding real number. 
\emph{Theory} of a \CSTAR algebra $A$ is the map $\varphi\mapsto \varphi^A$ 
from  the set of all sentences  into $\bbR$.

The above definition results in an uncountable set of formulas. However, 
by restricting terms to *-polynomials  with complex rational coefficients and
continuous functions $f$ in (F1) to polynomials with rational coefficients, 
one obtains a countable set of formulas that approximate every other formula arbitrarily well. 
Let $\bbS_0$ denote the set of  all sentences  in this countable set. 
Clearly, the restriction of $\Th(A)$ to $\bbS_0$ determines $\Th(A)$
and we can therefore consider a closed subset of $\bbR^{\bbS_0}$ 
to be a Borel space of all theories of \CSTAR algebras. 

\begin{prop} The function from $\hat \Gamma$ into $\bbR^{\bbS_0}$ that
associates $\Th(C^*(\gamma))$ to $\gamma\in \hat\Gamma$ is Borel. 
\end{prop}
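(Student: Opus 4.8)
The plan is to show that the map $\gamma \mapsto \Th(C^*(\gamma))$ is Borel by proving that, for each fixed sentence $\varphi \in \bbS_0$, the real-valued function $\gamma \mapsto \varphi^{C^*(\gamma)}$ is Borel; since $\bbR^{\bbS_0}$ carries the product Borel structure, Borelness of each coordinate function yields Borelness of the whole map. So the genuine content is to evaluate a single fixed sentence in a Borel fashion, and then induct on the complexity of $\varphi$.

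First I would set up the evaluation of \emph{terms}. For a fixed $*$-polynomial $P$ in variables $x_0,\dots,x_{n-1}$ with complex rational coefficients, the map $\hat\Gamma \times (\bbN)^n \to \cB(H)$ sending $(\gamma, i_0,\dots,i_{n-1})$ to $P(\gamma(i_0),\dots,\gamma(i_{n-1}))$ is Borel, because the $*$-algebra operations on $\cB(H)$ are Borel for the weak Borel structure and $\gamma \in \hat\Gamma$ simply reads off operators coordinatewise. Composing with the operator-norm function (which is Borel on $\cB(H)$) handles the basic formulas $\|P\|$. Crucially, because $\gamma \in \hat\Gamma$ enumerates a norm-dense subset $\{\p_j(\gamma)\}$ of $C^*(\gamma)$, the quantifiers $\sup_{\|x\|\le K}$ and $\inf_{\|x\|\le K}$ over the operator-norm ball can be replaced by suprema and infima over the \emph{countable} dense set of those $\p_j(\gamma)$ lying in the relevant ball, without changing the value, by uniform continuity of the interpreted formula on bounded balls (as recalled from \cite[Lemma 2.2]{FaHaSh:Model2}).

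This replacement is the heart of the argument and the step I expect to be the main obstacle. I would argue inductively: assuming $\gamma \mapsto \varphi(\p_{j_0}(\gamma),\dots)^{C^*(\gamma)}$ is Borel as a function of $\gamma$ and the chosen indices, the value of $\sup_{\|x\|\le K}\varphi$ at $C^*(\gamma)$ equals $\sup_{j}\{\varphi(\dots,\p_j(\gamma),\dots)^{C^*(\gamma)} : \|\p_j(\gamma)\| \le K\}$. The subtlety is that restricting the supremum to elements of norm \emph{at most} $K$ that happen to be among the $\p_j(\gamma)$ may miss points whose norm is exactly $K$; but density together with uniform continuity of $\varphi$ on the ball of radius $K+1$ shows that a countable supremum over the dense set recovers the true value, possibly after a standard $\varepsilon$-approximation argument letting the norm bound approach $K$ from below and using continuity of $\varphi$ in its arguments. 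A countable supremum (or infimum) of Borel functions is Borel, and case (F1) is handled by noting that $f \colon \bbR^n \to \bbR$ continuous composed with Borel functions $\varphi_1,\dots,\varphi_n$ is Borel.

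Assembling these, the induction on the complexity of formulas (with basic formulas as the base case and (F1), (F2) as the inductive steps) shows that for every sentence $\varphi \in \bbS_0$ the map $\gamma \mapsto \varphi^{C^*(\gamma)}$ is Borel. Since the target $\bbR^{\bbS_0}$ has the Borel structure generated by the coordinate projections, a map into it is Borel exactly when each coordinate is Borel, which completes the proof. The only care needed is to track the dependence on the finitely many free-variable slots during the induction, so that the quantifier steps produce genuine countable sup/inf operations over Borel-indexed families; everything else is routine closure of the Borel functions under the operations involved.
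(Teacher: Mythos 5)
Your proposal is correct and follows essentially the same route as the paper: induction on the complexity of $\varphi$ (tracking the free-variable indices), with basic formulas handled by Borelness of norm evaluation, (F1) by composition with continuous functions, and (F2) by replacing the ball quantifier with a countable sup/inf over the dense sequence, using uniform continuity of interpreted formulas on bounded balls. The only cosmetic difference is in the quantifier step: the paper rescales each $\gamma_i$ into the ball of radius $K$ via a continuous cutoff function, whereas you restrict to those $\p_j(\gamma)$ of norm at most $K$ and recover the boundary by an $\varepsilon$-approximation; both devices work for the same reason.
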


\begin{proof} This is an immediate consequence of the lemma given 
below. 
\end{proof} 

\begin{lemma} Given a formula $\varphi(x_0,\dots, x_{n-1})$, the 
map that associates $\varphi(\gamma_{k(0)}, \dots, \gamma_{k(n-1)})^{C^*(\gamma)}$ 
to a pair $(\gamma, \vec k)\in\hat\Gamma\times \bbN^n$  is Borel. 
\end{lemma}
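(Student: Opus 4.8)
The plan is to prove the lemma by induction on the complexity of the formula $\varphi$, following the recursive definition of formulas given in (F1) and (F2). The statement I want to establish is that for each fixed formula $\varphi(x_0,\dots,x_{n-1})$, the map
\[
(\gamma,\vec k)\mapsto \varphi(\gamma_{k(0)},\dots,\gamma_{k(n-1)})^{C^*(\gamma)}
\]
from $\hat\Gamma\times\bbN^n$ into $\bbR$ is Borel. Since the interpretation of a formula is built up recursively, a structural induction matching the inductive generation of $\bbF$ is the natural route.

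For the base case I would treat basic formulas $\|P(x_0,\dots,x_{n-1})\|$ where $P$ is a $*$-polynomial (a term). Evaluating $P(\gamma_{k(0)},\dots,\gamma_{k(n-1)})$ amounts to forming a fixed noncommutative $*$-polynomial expression in finitely many of the operators $\gamma(j)$; the coordinate selection $(\gamma,\vec k)\mapsto(\gamma_{k(0)},\dots,\gamma_{k(n-1)})$ is Borel, and the polynomial operations (addition, multiplication, adjoint, scalar multiplication) on $\cB(H)$ are Borel with respect to the weak Borel structure on $\cB(H)$. Hence $(\gamma,\vec k)\mapsto P(\gamma_{k(0)},\dots,\gamma_{k(n-1)})$ is a Borel map into $\cB(H)$, and composing with the norm $T\mapsto\|T\|$, which is Borel on $\cB(H)$ with the weak Borel structure (the norm is a supremum of the Borel functions $T\mapsto|\langle Te_i,e_j\rangle|$ over a countable dense family, hence lower semicontinuous and Borel), yields a Borel map. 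This settles the basic formulas.

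For the inductive step (F1), if $f\colon\bbR^m\to\bbR$ is continuous and $\varphi_1,\dots,\varphi_m$ are formulas whose interpretations are already known to be Borel in $(\gamma,\vec k)$, then $f(\varphi_1,\dots,\varphi_m)$ is the composition of $f$ with the Borel map $(\gamma,\vec k)\mapsto(\varphi_1^{C^*(\gamma)},\dots,\varphi_m^{C^*(\gamma)})$, and composition of a continuous function with a Borel map is Borel. The genuinely delicate case is (F2), the quantifier step $\sup_{\|x\|\leq K}\psi$ (and dually $\inf_{\|x\|\leq K}\psi$). Here the quantifier ranges over the uncountably many elements of the ball of radius $K$ in $C^*(\gamma)$, which depends on $\gamma$, so one cannot naively take a supremum over a fixed index set. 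The key observation, and what I expect to be the main obstacle, is that one must replace this supremum over the closed ball of $C^*(\gamma)$ by a supremum over a countable, uniformly definable dense subset. The right tool is a Borel map producing a sequence dense in $C^*(\gamma)$—for instance the sequence $(\p_j(\gamma))_j$ of $*$-polynomial evaluations, which by construction is norm-dense in $C^*(\gamma)$ (as recorded in Section~\ref{S.preliminaries}). Using the uniform continuity of the interpretation of $\psi$ on bounded balls (\cite[Lemma~2.2]{FaHaSh:Model2}), the supremum over $\{x\in C^*(\gamma):\|x\|\leq K\}$ equals the supremum over those $\p_j(\gamma)$ lying in (a slightly enlarged) ball of radius $K$, and this is a countable supremum of Borel functions, hence Borel. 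The $\inf$ case is symmetric.

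The care required in (F2) is to ensure the dense sequence can be selected so that its norm constraint $\|\p_j(\gamma)\|\leq K$ is itself a Borel condition in $(\gamma,j)$—which it is, since $\gamma\mapsto\p_j(\gamma)$ is Borel into $\cB(H)$ and the norm is Borel—and to confirm via the uniform continuity that approximating the quantified variable by dense elements does not change the value of the supremum/infimum. With these points in place, the countable supremum (respectively infimum) of Borel real-valued functions is Borel, closing the induction. The proposition then follows immediately: since $\Th(C^*(\gamma))$ is determined by the values $\varphi^{C^*(\gamma)}$ for $\varphi$ ranging over the countable set $\bbS_0$ of sentences (the case $n=0$ of the lemma, where the free-variable data $\vec k$ is vacuous), and each such coordinate map $\gamma\mapsto\varphi^{C^*(\gamma)}$ is Borel, the map $\gamma\mapsto\Th(C^*(\gamma))\in\bbR^{\bbS_0}$ is Borel.
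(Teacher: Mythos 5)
Your proof is correct and takes essentially the same approach as the paper: induction on the complexity of $\varphi$, with the base case and (F1) handled by Borelness of polynomial/norm evaluation and composition with continuous functions, and the quantifier step (F2) reduced to a countable supremum of Borel functions over a dense sequence in $C^*(\gamma)$. The only cosmetic difference is that the paper pushes the dense sequence into the $K$-ball by a continuous rescaling function, whereas you filter it by the (Borel) norm condition and invoke uniform continuity of the interpretation on bounded balls; both devices yield the same countable supremum.
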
 

\begin{proof} By recursion on the complexity of $\varphi$. We suppress parameters $x_0,\dots, x_{n-1}$ for simplicity.  
If $\varphi$ is basic, then the lemma reduces to the fact that evaluation of
the norm of a *-polynomial is Borel-measurable. 
The case when  $\varphi$ is of the form $f(\varphi_0,\dots, \varphi_{n-1})$ as in (F1) 
and lemma is true for each $\varphi_i$ is trivial. 

Now assume $\varphi$ is of the form $\sup_{\|y\|\leq K} \psi(y)$ with $K\geq 1$.  
Function $t_K\colon \bbR\to \bbR$  defined by $t(r)=r$, if $r\leq K$ and $t(r)=1/r$ if $r>K$
is continuous, and since  
\[
\varphi^{C^*(\gamma)}=\sup_{i\in \bbN} \psi(_K(\|\gamma_i\|)\gamma_i)
\]
we conclude that the computation of $\varphi$ is Borel as a supremum of countably many Borel functions. 
The case when $\varphi$ is $\inf_{\|y\|\leq K} \psi(y)$ is similar. 
\end{proof} 

We note that an analogous proof shows  that the computation of a theory of a 
  tracial von Neumann algebra is  a Borel function from the corresponding subspace of 
   Effros--Mar\'echal space into $\bbR^{\bbS_0}$. 
  
\subsection{Stable and real rank}\label{S.ranks}

The stable rank $\mathrm{sr}(A)$ of a unital \CSTAR algebra $A$ is the least natural number $n$ such that 
\[
Lg_n = \left\{ (a_1,\ldots,a_n) \in A^n \ | \ \exists \ b_1,\ldots, b_n \in A \ \mathrm{such \ that} \left\| \sum_{i=1}^n b_ia_i - \mathbf{1}_A \right\| < 1 \right\}
\]
is dense in $A^n$, if such exists, and $\infty$ otherwise.  The real rank $\mathrm{rr}(A)$ is the least natural number $n$ such that 
\[
Lg_{n+1}^{sa} = \left\{ (a_1,\ldots,a_{n+1}) \in A_{sa}^{n+1} \ | \ \exists \ b_1,\ldots, b_{n+1} \in A_{sa} \ \mathrm{such \ that} \left\| \sum_{i=1}^{n+1} b_ia_i - \mathbf{1}_A \right\| < 1 \right\}
\]
where $A_{sa}$ denotes the self-adjoint elements of $A$.  Again, if no such $n$ exists, we say that $\mathrm{rr}(A)=\infty$.

\begin{theorem}
The maps $\mathrm{SR}: \Gamma \to \mathbb{N} \cup \{\infty\}$ and $\mathrm{RR}: \Gamma \to \mathbb{N} \cup \{\infty\}$ given by 
$\mathrm{SR}(\gamma) = \mathrm{sr}(C^*(\gamma))$ and $\mathrm{RR}(\gamma) = \mathrm{rr}(C^*(\gamma))$, respectively, are Borel.
\end{theorem}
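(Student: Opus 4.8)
The plan is to express, for each $n$, the statements $\mathrm{sr}(C^*(\gamma))\le n$ and $\mathrm{rr}(C^*(\gamma))\le n$ as Borel conditions on $\gamma$, and then recover the two maps from these level sets. Both ranks are unchanged under unitization: $\mathrm{sr}(\tilde A)=\mathrm{sr}(A)$ and $\mathrm{rr}(\tilde A)=\mathrm{rr}(A)$ for every separable \CSTAR algebra $A$ (for non-unital $A$ this is the definition, while for unital $A$ one uses $\tilde A\cong A\oplus\C$ together with $\mathrm{sr}(A\oplus\C)=\max(\mathrm{sr}(A),1)=\mathrm{sr}(A)$ and the analogous identity for real rank). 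I would therefore first compose with the Borel unitization map of \cite[Lemma 3.12]{FaToTo2} and pass to the $\Au$-picture of Section~\ref{S.preliminaries}, obtaining from $\gamma$ a Borel parameter for the unital algebra $\tilde{C^*(\gamma)}$ together with a norm-dense sequence $(q_j)$ of unital $*$-polynomial evaluations; here the constant polynomial furnishes the unit, so $q_{j_0}=1$ for a fixed index $j_0$.

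The key observation is that $Lg_n$ is an \emph{open} subset of $A^n$: if $\|\sum_i b_i a_i-1\|<1$ then the same witnesses $b_i$ continue to work for all sufficiently small perturbations of $(a_1,\dots,a_n)$. Hence $Lg_n$ is dense in $A^n$ if and only if every $*$-polynomial tuple lies in $\overline{Lg_n}$; and, applying openness a second time, an approximating element of $Lg_n$ together with its left-inverse witnesses may itself be taken to be a tuple of $*$-polynomial evaluations. Consequently $\mathrm{sr}(C^*(\gamma))\le n$ is equivalent to
\[
(\forall j_1,\dots,j_n)(\forall k)(\exists l_1,\dots,l_n,m_1,\dots,m_n)\ \Bigl(\bigwedge_{i=1}^n \|q_{l_i}-q_{j_i}\|<\tfrac1k\Bigr)\wedge \Bigl\|\sum_{i=1}^n q_{m_i}q_{l_i}-1\Bigr\|<1.
\]
Each atomic clause asserts that the norm of a fixed $*$-polynomial in the (unitized) parameter lies below a rational threshold, and such conditions are Borel because $\gamma\mapsto\p(\gamma)$ is Borel and the norm is a Borel function on $\cB(H)$ (cf. the proof of Lemma~\ref{L.K.1}). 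Being a countable combination of Borel sets, $\{\gamma:\mathrm{sr}(C^*(\gamma))\le n\}$ is Borel.

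The real rank is treated identically after replacing $(q_j)$ by the self-adjoint elements $s_j=\tfrac12(q_j+q_j^*)$, which are norm-dense in $A_{sa}$, using $n+1$ coordinates in place of $n$, and taking all witnesses from $(s_j)$ as well; this shows $\{\gamma:\mathrm{rr}(C^*(\gamma))\le n\}$ is Borel. From these level sets the maps follow: $\mathrm{SR}^{-1}(\{n\})=\{\mathrm{sr}\le n\}\setminus\{\mathrm{sr}\le n-1\}$ and $\mathrm{SR}^{-1}(\{\infty\})=\bigcap_n\{\mathrm{sr}>n\}$ are Borel, and likewise for $\mathrm{RR}$, so both maps are Borel.

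The one delicate point---and the main obstacle---is the passage from the a priori uncountable requirement ``$Lg_n$ is dense in $A^n$'' to the displayed countable condition: one must justify that \emph{both} the approximating tuple \emph{and} the left-inverse witnesses can be drawn from the countable dense set $(q_j)$. This is exactly where openness of $Lg_n$, i.e.\ the strictness of the inequality $<1$, is indispensable; without it the condition would in general be merely analytic rather than Borel.
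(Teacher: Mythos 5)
Your proof is correct and follows essentially the same route as the paper: both reduce density of the open set $Lg_n$ to a countable combination of norm conditions indexed by a dense sequence of $*$-polynomial evaluations, using openness of $Lg_n$ to draw both the approximating tuple and the left-inverse witnesses from that sequence. If anything you are more careful than the paper, whose displayed equivalence omits your $(\forall k)(\exists l_1,\dots,l_n)$ approximation step (as literally written it would require every dense-sequence tuple to lie in $Lg_n$ itself rather than merely in its closure) and does not address where $\mathbf{1}_A$ comes from when $C^*(\gamma)$ is non-unital; your unitization step and the explicit two-fold use of openness supply exactly what is needed.
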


\begin{proof}
We treat only the case of $\mathrm{SR}(\bullet)$;  the case of $\mathrm{RR}(\bullet)$ is similar.  We have
\[
C^*(\gamma) \in Lg_n \Leftrightarrow (\forall i_1 < i_2 < \cdots i_n)(\exists j_1 < j_2 <\cdots < j_n): \left\| \sum_{k=1}^n \gamma_{j_k}\gamma_{i_k} - \mathbf{1}_A \right\| < 1.
\]
For fixed $i_1 < i_2 < \cdots i_n$ and $j_1 < j_2 <\cdots < j_n$, the set on the left hand side is norm open in all co-ordinates
of $B(\mathcal{H})^\mathbb{N} =\Gamma$, and hence Borel.  The theorem follows immediately.
\end{proof}

\appendix
\section{Appendix (with Caleb Eckhardt)}

\subsection{$\mathcal{Z}$-stability}

The Jiang-Su algebra $\mathcal{Z}$ plays a central role in the classification theory of nuclear separable \CSTAR algebras.  Briefly, one can expect good classification results for algebras which are $\mathcal{Z}$-stable, i.e., which satisfy $A \otimes \mathcal{Z} \cong A$ (see \cite{et} for a full discussion).  We prove here that the subset of $\Gamma$ consisting of $\mathcal{Z}$-stable algebras is Borel.  

It was shown in \cite{js} that $\mathcal{Z}$ can be written as the limit of a \CSTAR algebra inductive sequence
\[
Z_{n_1,n_1+1} \stackrel{\phi_1}{\longrightarrow}
Z_{n_2,n_2+1} \stackrel{\phi_2}{\longrightarrow}
Z_{n_3,n_3+1} \stackrel{\phi_3}{\longrightarrow} \cdots
\]
where
\[
Z_{n,n+1} = \{f \in C([0,1]; \mathrm{M}_n \otimes \mathrm{M}_{n+1}) \ | \  f(0) \in \mathrm{M}_n \otimes \mathbf{1}_{n+1}, \ f(1) \in \mathbf{1}_n \otimes \mathrm{M}_{n+1} \}
\]
is the {\it prime dimension drop algebra} associated to $n$ and $n+1$.  The property of being $\mathcal{Z}$-stable for a \CSTAR algebra $A$ can be characterized as the existence, for each $n$, of a sequence of $*$-homomorphisms $\psi_k: Z_{n,n+1} \to A$ with the property that 
\[
\| [\psi_k(f), x]\| \to 0, \ \forall f \in Z_{n,n+1}, \ \forall x \in A.
\]
The algebra $Z_{n,n+1}$ was shown in \cite{js} to admit weakly stable relations, i.e., there exists a finite set of relations $\mathcal{R}_n$ in $l(n)$ indeterminates with the following properties:  
\begin{enumerate}
\item[(i)] the universal \CSTAR algebra for $\mathcal{R}_n$ is $\mathcal{Z}_{n,n+1}$;
\item[(ii)] for every $\epsilon>0$ there exists $\delta(\epsilon)>0$ such that if $g_1,\ldots,g_{l(n)}$ are elements in a \CSTAR algebra $A$ which satisfy the relations $\mathcal{R}_n$ to within $\delta(\epsilon)$, then there exist $h_1,\ldots,h_{l(n)} \in A$ which satisfy the relations $\mathcal{R}_n$ precisely and for which $\|g_i-h_i\| < \epsilon$.
\end{enumerate}
What's really relevant for us is that if $g_1,\ldots,g_{l(n)}$ are elements in a \CSTAR algebra $A$ which satisfy the relations $\mathcal{R}_n$ to within $\delta(\epsilon)$, then there is a $*$-homomorphism $\eta:Z_{n,n+1} \to A$ such that the indeterminates for $\mathcal{R}_n$ are sent to elements $\epsilon$-close to $g_1,\ldots,g_{l(n)}$, respectively.

Using the equivalence of the parameterizations $\Gamma$ and $\hat\Gamma$ for separable \CSTAR algebras, we may assume that the sequence $\gamma$ in $B(\mathcal{H})^\mathbb{N}$ giving rise to C$^*(\gamma)$ is in fact dense in C$^*(\gamma)$.  The $\mathcal{Z}$-stability of C$^*(\gamma)$ for $\gamma = (a_i)_{i \in \mathbb{N}}$ is then equivalent to the following statement:
\vspace{2mm}
\begin{quote}
$(\forall k)(\forall n)(\forall j)(\exists (i_1,\ldots i_{l(n)}))$ such that $a_{i_1},\ldots,a_{i_{l(n)}}$ are a $\delta(1/k)$-representation of $\mathcal{R}_n$ and $\| [a_{i_s},a_m] \| < 1/k$ for each $s \in \{1,\ldots,l(n)\}$ and $m \in \{1,\ldots, j\}$.
\end{quote}
\vspace{2mm}
\noindent
If we fix $k,n,j$ and $(i_1,\ldots,i_{l(n)})$ it is clear that those $\gamma \in \hat\Gamma$ for which $(a_{i_1},\ldots,a_{i_{l(n)}})$ satisfy the latter two conditions above form a norm open and hence Borel set.  This theorem follows immediately:
\begin{theorem}\label{zstable}
$\{\gamma \in \Gamma \ | \ C^*(\gamma) \ \mathrm{is \ } \mathcal{Z}\mathrm{-stable} \}$
is Borel.
\end{theorem}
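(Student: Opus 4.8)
The plan is to convert the analytic characterization of $\mathcal{Z}$-stability into a countable Boolean combination of norm-open conditions on the entries of $\gamma$. First I would invoke the characterization that $C^*(\gamma)$ is $\mathcal{Z}$-stable precisely when, for every $n$, there exists a sequence of $*$-homomorphisms $\psi_k : Z_{n,n+1} \to C^*(\gamma)$ that is asymptotically central, i.e., $\|[\psi_k(f), x]\| \to 0$ for all $f \in Z_{n,n+1}$ and all $x \in C^*(\gamma)$. The difficulty is that this statement quantifies over the uncountable objects $f \in Z_{n,n+1}$ and $x \in C^*(\gamma)$, and over the $*$-homomorphisms $\psi_k$ themselves, so it is not immediately in a form amenable to the arithmetic of Borel sets.

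The key device for overcoming this is the weak stability of the relations $\mathcal{R}_n$ presenting $Z_{n,n+1}$. Using weak stability, I would replace each exact $*$-homomorphism $\psi_k$ by a finite tuple $(g_1,\dots,g_{l(n)})$ in $C^*(\gamma)$ satisfying $\mathcal{R}_n$ only to within $\delta(1/k)$: such an approximate representation yields, by property (ii), an honest $*$-homomorphism $\eta : Z_{n,n+1} \to C^*(\gamma)$ sending the indeterminates to elements $1/k$-close to the $g_i$, and conversely an exact $*$-homomorphism restricts to an exact (hence approximate) representation of $\mathcal{R}_n$. Since the generators of $Z_{n,n+1}$ determine a dense subset and commutators depend continuously on the image, asymptotic centrality of the $\psi_k$ is equivalent to the commutators $[g_i, x]$ becoming small; and because commutation with every element follows from commutation with a dense sequence, I may test centrality against the entries $a_m$ of $\gamma$ alone. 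This is precisely where the passage to $\hat\Gamma$ is used: having arranged $\gamma = (a_i)_{i\in\mathbb{N}}$ dense in $C^*(\gamma)$, the approximate representation may be sought among the $a_i$ themselves, and centrality checked against finitely many of them.

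These reductions collapse $\mathcal{Z}$-stability to the displayed statement in the excerpt: for all $k, n, j$ there exist indices $i_1, \dots, i_{l(n)}$ such that $a_{i_1},\dots,a_{i_{l(n)}}$ form a $\delta(1/k)$-representation of $\mathcal{R}_n$ and $\|[a_{i_s}, a_m]\| < 1/k$ for $s \le l(n)$ and $m \le j$. With $k,n,j$ and the tuple of indices fixed, each of the two innermost conditions is a finite conjunction of strict norm inequalities among fixed coordinate projections of $B(\mathcal{H})^{\mathbb{N}} = \Gamma$, hence defines a norm-open and therefore Borel subset of $\Gamma$. The whole condition is then $\bigcap_{k,n,j} \bigcup_{(i_1,\dots,i_{l(n)})} (\text{Borel})$, a countable intersection of countable unions of Borel sets, and so Borel, as required.

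The step I expect to be the main obstacle is the equivalence between the standard functional-analytic characterization of $\mathcal{Z}$-stability and its combinatorial reformulation over $\mathbb{N}$; in particular, verifying that weak stability genuinely permits passing back and forth between approximate representations by tuples and exact central $*$-homomorphisms, uniformly in $k$, and that testing centrality against the dense sequence $\gamma$ rather than all of $C^*(\gamma)$ loses nothing. Once that equivalence is secured, the Borel computation is routine.
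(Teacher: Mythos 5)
Your proposal is correct and follows essentially the same route as the paper's proof: the characterization of $\mathcal{Z}$-stability via asymptotically central maps from $Z_{n,n+1}$, the use of weak stability of the relations $\mathcal{R}_n$ to replace exact $*$-homomorphisms by approximate representations drawn from a dense sequence, and the final expression of the condition as a countable intersection of countable unions of norm-open sets.
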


\subsection{Nuclear dimension}

A completely positive map $\phi\colon A\to B$ between \CSTAR algebras has \emph{order zero} if 
it is orthogonality preserving, in the sense that for positive $a,b$ in $A$
we have $ab=0$ implies $\phi(a)\phi(b)=0$. 
 
A \CSTAR algebra $A$ has \emph{nuclear dimension} at most $n$ if the following holds. 
For every $\e>0$, for every finite $F\subseteq A$, there are finite-dimensional 
\CSTAR algebras $B_1,\dots, B_n$ 
and completely positive maps $\psi\colon A\to \bigoplus_{i=1}^n B_i$ and 
$\phi\colon \bigoplus_{i=0}^n B_i\to A$ such that 
\begin{enumerate}
\item $\|\psi\circ \phi(a)-a\|<\e$ for all $a\in F$, 
\item $\|\psi\|\leq 1$, and 
\item $\phi\rs B_i$ has order zero for every $i\leq n$. 
\end{enumerate}
The \emph{nuclear dimension} of $A$, denoted $\dimnuc(A)$, 
 is the minimal $n$ (possibly $\infty$) 
such that $A$ has nuclear dimension $\leq n$ (see \cite{WiZa:Nuclear}). 

The proof of the following theorem is based on Effros's proof that nuclear \CSTAR algebras form a Borel subset of~$\Gamma$ (see \cite[\S 5]{Kec:C*}). 
 
\begin{theorem} 
The  map $\dimnuc\colon \Gamma\to \bbN\cup \{\infty\}$ is Borel. 
\end{theorem}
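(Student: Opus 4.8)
The plan is to imitate Effros's proof that the nuclear \CSTAR algebras form a Borel subset of $\Gamma$ (see \cite[\S 5]{Kec:C*}), combined with the weak-stability device used just above for $\mathcal Z$-stability. It suffices to show that for each fixed $n$ the set $\{\gamma:\dimnuc(C^*(\gamma))\le n\}$ is Borel: then $\dimnuc^{-1}(\{0,\dots,n\})$ is Borel for every $n$, and $\dimnuc^{-1}(\infty)$ is the complement of their union, so $\dimnuc$ is a Borel map. The first step is to make all quantifiers countable. The clauses ``$\forall\e>0$'' and ``$\forall$ finite $F\subseteq A$'' may be replaced by ``$\forall k\in\bbN$'' (taking $\e=1/k$) and ``$\forall$ finite $F\subseteq\{\p_m(\gamma):m\in\bbN\}$'', since that sequence is norm dense in $C^*(\gamma)$; and the finite-dimensional targets $\bigoplus_{i=1}^n B_i$ range, up to isomorphism, over the countable family indexed by tuples $(d_1,\dots,d_n)\in\bbN^n$ with $B_i=M_{d_i}$, so the choice of target is a countable existential.

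Next I would encode the two maps, after a harmless Borel preprocessing. For $\psi\colon A\to\bigoplus M_{d_i}$, I first replace $\gamma=(a_i)$ by its infinite amplification $\gamma\mapsto(a_i\otimes 1)$ acting on $H\cong H\otimes\ell^2$; this assignment is Borel, leaves $C^*(\gamma)$ unchanged up to isomorphism (hence does not change $\dimnuc$), and makes the ambient representation faithful of infinite multiplicity. By Stinespring together with Voiculescu's theorem, every completely positive contraction $A\to M_d$ is then approximated, in the point-norm topology on any finite set, by maps of the form $a\mapsto V^*aV$ with $V\colon\bbC^d\to H$ a contraction. Such maps are automatically completely positive; the condition $\|V\|\le 1$ (equivalent to $\|\psi\|\le 1$) is closed, and the inequality asserting that the composition $\phi\circ\psi\colon A\to A$ approximates the identity on $F$ is open in $V$.

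For $\phi\colon\bigoplus M_{d_i}\to A$ with order-zero blocks, each block $\phi\restriction M_{d_i}$ is determined by the images $f^{(i)}_{pq}=\phi(e^{(i)}_{pq})\in A$, and being completely positive, contractive and order zero is equivalent to a finite set of $*$-polynomial relations on these elements (equivalently, $\phi\restriction M_{d_i}$ is a $*$-homomorphism from the cone $C_0((0,1],M_{d_i})$). Since the cone is a one-dimensional NCCW complex these relations are weakly stable, so approximate solutions with entries among $\{\p_m(\gamma):m\in\bbN\}$ can always be corrected to genuine order-zero maps, exactly as the relations $\mathcal R_n$ were used in the proof of Theorem~\ref{zstable}. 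Assembling all of this, ``$\dimnuc(C^*(\gamma))\le n$'' becomes a countable conjunction (over $k$ and over finite $F$) of countable disjunctions (over the sizes $(d_i)$, over rational contractions $V$ drawn from a fixed countable dense set, and over finite index-tuples $m$ coding an approximate order-zero $\phi$ via $f^{(i)}_{pq}=\p_m(\gamma)$) of conditions that are open, hence Borel, in $\gamma$. A countable Boolean combination of Borel sets is Borel, so $\{\gamma:\dimnuc(C^*(\gamma))\le n\}$ is Borel and the theorem follows.

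I expect the main obstacle to be the \emph{exact} order-zero requirement on $\phi$: unlike the approximate-factorization inequalities, order zero is a rigid algebraic condition that cannot merely be approximated, and a naive existential over it would produce only an analytic set. The resolution is the weak stability (semiprojectivity) of the cone relations, which permits quantifying over finitary approximate data and then upgrading to an honest order-zero map. Verifying the precise form of these relations and their weak stability, together with checking that the Stinespring encoding of $\psi$ genuinely reproduces the value of $\dimnuc$ (which is exactly where the infinite-multiplicity preprocessing is needed), are the two points that will require real care.
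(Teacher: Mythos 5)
Your proof is correct and shares the paper's overall Effros-style skeleton --- fix $n$, replace the quantifiers over $\e$, $F$, and the finite-dimensional targets by countable ones, and reduce membership in $\{\gamma:\dimnuc(C^*(\gamma))\le n\}$ to a countable Boolean combination of Borel conditions --- but the two halves are implemented with genuinely different tools. For the downward maps $A\to M_d$ the paper quotes the parameterization from \cite[\S 5]{Kec:C*} of completely positive maps by elements of $M_d(A^*)$ with the weak*-topology, together with a Borel enumeration of a weak*-dense subset, whereas you amplify the representation to infinite multiplicity and invoke Stinespring plus Voiculescu to approximate by compressions $a\mapsto V^*aV$; both work, yours at the cost of a harmless Borel preprocessing step (and of checking that the ambient representation meets Voiculescu's hypotheses, which the amplification guarantees). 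For the upward order-zero maps $M_d\to A$ the paper parameterizes completely positive maps by positive contractions in $M_d(A)$ and asserts that a countable dense set of \emph{order-zero} such maps can be enumerated in a Borel way (order-zero maps forming a closed subset); you instead encode an order-zero block by the images of the matrix units, identify the exact order-zero condition with a $*$-homomorphism out of the cone $C_0((0,1],M_d)$, and use the weak stability (projectivity) of that cone's finite presentation to quantify over approximate finitary data drawn from the dense sequence $\p_m(\gamma)$, exactly as in the proof of Theorem~\ref{zstable}. Your route has the advantage of sidestepping a Borel selection of a dense subset of a $\gamma$-dependent closed set of order-zero maps, a point the paper leaves implicit; the price is the $\e$-bookkeeping needed to check that correcting an approximate order-zero map to an exact one does not destroy the approximate factorization, which you correctly flag and which is routine.
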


\begin{proof} It suffices  to check that the set of all $\gamma$ 
such that $\dimnuc(C^*(\gamma))\leq n$ is Borel.  
Let $M_n(A^*)$ denote the space of  $n\times n$ matrices of the elements
of the Banach space dual of $A$, naturally identified with the space 
of bounded linear maps from $A$ into $M_n(\bbC)$. We consider this space with respect to the 
weak*-topology, which makes it into a $K_\sigma$ Polish space.
   
As demonstrated in \cite[\S 5]{Kec:C*}, there is a Borel map 
$\Upsilon\colon \Gamma\times n\to (M_n(A^*))^{\bbN}$
such that $\Upsilon(\gamma,n)$ enumerates a dense subset of 
the (weak*-compact) set of completely positive maps  from $A$ into $M_n(\bbC)$. 
Note that order zero maps form a closed subset of the set of completely positive maps, 
and the proof from \cite{Kec:C*} provides a Borel enumeration of a countable dense set of
completely positive order zero maps. 

Again as in \cite[\S 5]{Kec:C*}, we use the fact that a map $\psi$ from $M_n(\bbC)$ to $A$ 
is completely positive  if and only if $\psi((x_{ij})=\sum_{i,j} x_{i,j} a_{i,j}$ where $(a_{i,j})$
is a positive element of $M_n(A)$ of norm $\leq1$. 
By \cite[Lemma 3.10]{FaToTo2} and \cite[Lemma 3.13]{FaToTo2} there is 
a Borel function $\Xi\colon \Gamma\to (\Gamma^{n\times n})^{\bbN}$
such that $\Xi(\gamma)$ is an enumeration of a countable dense set
of such $(a_{i,j})$. 

Inspection of (1)--(3) in 
the definition of $\dimnuc(C^*(\gamma))\leq n$ reveals that the verification of these conditions
is only required over countable subsets of the allowable $\phi$, $\psi$, and $a$, subsets which
are computed in a Borel manner from $\gamma$ using the maps $\Upsilon$, $\Xi$, and $\gamma$ itself,
respectively.  It follows that the set of $\gamma$ for which $\dimnuc(C^*(\gamma))\leq n$ is Borel.
\end{proof}

\bigskip

{\it Acknowledgement.} C. Eckhardt was supported by NSF grant DMS-1101144.  I. Farah was partially supported by NSERC. A. Toms was supported by NSF grant DMS-0969246 and the 2011 AMS Centennial Fellowship. A. T\"ornquist was supported by a Sapere Aude fellowship (level 2) from DenmarkÕs Natural Sciences Research Council, no. 10-082689/FNU, and a Marie Curie re-integration grant, no. IRG-249167, from the European Union.

 \bibliography{invariants}
\bibliographystyle{plainnat}

\end{document}